\theoremstyle{plain}
\newtheorem{thm}{Theorem}[section]
\newtheorem{lem}[thm]{Lemma}
\theoremstyle{definition}
\theoremstyle{remark}
\newtheorem{remark}{Remark}
\newcommand{\C}{\mathbb{C}}
\newcommand{\R}{\mathbb{R}}
\newcommand{\N}{\mathbb{N}}
\newcommand{\im}{\textup{i}} 
\newcommand{\E}{K} 
\newcommand{\Ee}{\E_\e} 
\newcommand{\taun}{\mathcal T_n} 
\newcommand{\e}{e} 
\newcommand{\Pip}{\Pi_{\p}^{\E}} 
\newcommand{\Pie}{\Pi^{0,\e}_\p} 
\newcommand{\PiGammaR}{\Pi^{0,\Gamma_R}_\p} 
\newcommand{\PiGammaN}{\Pi^{0,\Gamma_N}_\p} 
\newcommand{\SE}{S^\E} 
\newcommand{\un}{u_h} 
\newcommand{\vn}{v_h} 
\newcommand{\w}{w} 
\renewcommand{\k}{k} 
\newcommand{\n}{\mathbf n} 
\newcommand{\nOmega}{\n_\Omega} 
\renewcommand{\Re}{\textup{Re}} 
\renewcommand{\a}{a} 
\renewcommand{\b}{b} 
\newcommand{\aE}{\a^\E} 
\newcommand{\anE}{\aE_\h} 
\newcommand{\an}{\a_\h} 
\newcommand{\bn}{b_\h} 
\newcommand{\fn}{F_\h} 
\newcommand{\PW}{\mathbb {PW}} 
\newcommand{\PWc}{\mathbb {PW}_\p^c} 
\newcommand{\PWctilde}{\widetilde{\mathbb {PW}}_\p^c} 
\newcommand{\PWE}{\mathbb {PW}_\p(\E)} 
\newcommand{\V}{V} 
\newcommand{\VgD}{\V_{\gD}} 
\newcommand{\Vz}{\V_{0}} 
\newcommand{\VhgD}{V_{\h,\gD}} 
\newcommand{\Vhz}{V_{\h,0}} 
\newcommand{\VE}{V_\h(\E)} 
\newcommand{\VEe}{V_\h(\Ee)} 
\newcommand{\VEhat}{\widehat{V_\h}(\E)} 
\newcommand{\wlE}{w_\ell^{\E}} 
\newcommand{\wjE}{w_j^{\E}} 
\newcommand{\wle}{w_\ell^e} 
\newcommand{\wre}{w_r^e} 
\newcommand{\wje}{w_j^e} 
\newcommand{\wzE}{w_{\zeta}^\E} 
\newcommand{\wlehat}{\widehat{w}_\ell^e} 
\newcommand{\pehat}{\widehat{p}_e} 
\newcommand{\GammaD}{\Gamma_D} 
\newcommand{\GammaN}{\Gamma_N} 
\newcommand{\GammaR}{\Gamma_R} 
\newcommand{\q}{q} 
\newcommand{\qE}{\q_\E} 
\newcommand{\dtilde}{\widetilde{\mathbf d}} 
\newcommand{\dtildetilde}{\widetilde{\widetilde{\mathbf d}}} 
\newcommand{\h}{h} 
\newcommand{\hE}{\h_\E} 
\newcommand{\he}{\h_e} 
\newcommand{\p}{p} 
\newcommand{\pe}{p_e} 
\newcommand{\pE}{p_\E} 
\newcommand{\gammaIK}{\gamma_I^K} 
\newcommand{\g}{g} 
\newcommand{\gtilde}{\widetilde \g} 
\newcommand{\gD}{\g_D} 
\newcommand{\gN}{\g_N} 
\newcommand{\gR}{\g_R} 
\newcommand{\En}{\mathcal E_n} 
\newcommand{\Enb}{\mathcal E_n^B} 
\newcommand{\EnI}{\mathcal E_n^I} 
\newcommand{\EnD}{\mathcal E_n^D} 
\newcommand{\EnN}{\mathcal E_n^N} 
\newcommand{\EnR}{\mathcal E_n^R} 
\newcommand{\EE}{\mathcal E^{\E}} 
\newcommand{\x}{\textbf{\textup{x}}} 
\newcommand{\xE}{\textbf{\textup{x}}_\E} 
\newcommand{\xe}{\textbf{\textup{x}}_{e}} 
\newcommand{\dstar}{\textbf{\textup{d}}_*} 
\newcommand{\dl}{\textbf{\textup{d}}_\ell} 
\newcommand{\djj}{\textbf{\textup{d}}_j} 
\newcommand{\dm}{\textbf{\textup{d}}_m} 
\newcommand{\Nedg}{N_e} 
\newcommand{\eps}{\textup{eps}} 
\newcommand{\Nd}{N_\text{dof}} 
\newcommand{\ds}{\text{d}s}
\newcommand{\dx}{\text{d}x}
\newcommand{\lin}{\textup{span}}
\newcommand{\J}{\mathcal{J}} 
\newcommand{\Je}{\mathcal{J}_e} 
\newcommand{\Jes}{\mathcal{J}_{e_s}} 
\newcommand{\Jer}{\mathcal{J}_{e_r}} 
\newcommand{\Jeprime}{\mathcal{J}_{e}'}
\newcommand{\dof}{\textup{dof}} 
\newcommand{\nE}{\textbf{\textup{n}}_\E} 
\newcommand{\Ne}{n_\E} 
\newcommand{\ME}{\mathcal{M}_\E} 
\newcommand{\wE}{w^\E} 
\newcommand{\we}{w^\e} 
\newcommand{\wetae}{w_{\eta}^\e} 
\newcommand{\dir}{\textbf{\textup{d}}} 
\newcommand{\Ghat}{\widehat{\boldsymbol{G}}^\E}
\newcommand{\Bhat}{\widehat{\boldsymbol{B}}^\E}
\newcommand{\Dhat}{\widehat{\boldsymbol{D}}^\E}
\newcommand{\Rhat}{\widehat{\boldsymbol{R}}}
\newcommand{\Fhat}{\widehat{\boldsymbol{f}}}
\newcommand{\Ahat}{\widehat{\boldsymbol{A}}}
\newcommand{\Ihat}{\widehat{\boldsymbol{I}}^\E}
\newcommand{\Pihat}{\widehat{\boldsymbol{\Pi}}^\E}
\newcommand{\Shat}{\widehat{\boldsymbol{S}}^\E}
\newcommand{\Rhate}{\widehat{\boldsymbol{R}}^e}
\newcommand{\abold}{\textbf{\textup{a}}}
\newcommand{\bbold}{\textbf{\textup{b}}}
\newcommand{\meas}{\textup{meas}} 
\author{
\normalsize{
}}
\date{}
\title{A nonconforming Trefftz virtual element method for the Helmholtz problem: numerical aspects}
\date{}
\author{Lorenzo Mascotto\thanks{Faculty of Mathematics, University of Vienna, 1090 Vienna, Austria (lorenzo.mascotto@univie.ac.at, ilaria.perugia@univie.ac.at, alex.pichler@univie.ac.at)},\ Ilaria Perugia\footnotemark[1],\ 
Alexander Pichler\footnotemark[1]}
\begin{document}
\maketitle
\begin{abstract}
We discuss the implementation details and the numerical performance of the recently introduced nonconforming Trefftz virtual element method \cite{ncTVEM_theory} for the 2D Helmholtz problem.
In particular, we present a strategy to significantly reduce the ill-conditioning of the original method;
such a recipe is based on an automatic filtering of the basis functions edge by edge, and therefore allows for a notable reduction of the number of degrees of freedom.
A widespread set of numerical experiments, including an application to acoustic scattering, the $\h$-, $\p$-, and $\h\p$-versions of the method, is presented.
Moreover, a comparison with other Trefftz-based methods for the Helmholtz problem shows that this novel approach results in robust and effective performance.

\medskip\noindent
\textbf{AMS subject classification}: 35J05, 65N12, 65N30, 74J20

\medskip\noindent
\textbf{Keywords}: Helmholtz equation, virtual element method, polygonal meshes, plane waves, ill-conditioning, nonconforming spaces
\end{abstract}

\section{Introduction} \label{section introduction}
Owing to their flexibility in dealing with complex geometries, Galerkin methods based on polytopal grids have been the object of an extensive study over the last years. 
Among them, we mention the discontinuous Galerkin method \cite{antonietti2016reviewDG},
the hybridized discontinuous Galerkin method \cite{cockburn_HDG}, the hybrid high-order method \cite{dipietroErn_hho}, the mimetic finite difference method \cite{BLM_MFD, lipnikov2014mimetic}, the high order boundary element method-based finite element method (FEM) \cite{Weisser_basic},
and the virtual element method (VEM)~\cite{VEMvolley, hitchhikersguideVEM}.
In this paper, we focus on the latter, which, despite its novelty, has already been used in a wide number of problems, including engineering applications.

In comparison to more standard methods, such as the FEM, the VEM has the feature that it is based on spaces of functions that are not known in closed form, but rather are defined elementwise as solutions to local partial differential equations.
Although seeming to be a hindrance at a first glance, this property allows for a natural coupling with the Trefftz setting, where the functions in the trial and test spaces belong elementwise to the kernel of the differential operator of the boundary value problem under consideration.
The advantage of incorporating properties of the problem solution in the approximating spaces is that, when solving homogeneous problems, less degrees of freedom are needed in order to achieve a given accuracy.
As typical of VEM, after defining local approximation spaces, one needs to introduce a set of degrees of freedom that allow to construct a computable method, via proper stabilizations and mappings
onto finite dimensional spaces of functions that (a) possess good approximation properties (polynomials, plane waves, \dots) and (b) are explicitly known.

In this paper, we focus on the approximation of solutions to the two dimensional homogeneous Helmholtz problem, which has already been the target of two different VE approaches.
The first one \cite{Helmholtz-VEM} is an $H^1$-conforming plane wave VEM (PWVEM), which can be interpreted as a partition of unity method \cite{BabuskaMelenk_PUMintro},
the way that the trial and test spaces consist elementwise of plane wave spaces that are eventually glued together by modulating them via a partition of unity.
On the other hand, the second and more recent approach is a nonconforming Trefftz-VEM introduced in \cite{ncTVEM_theory}. The latter combines the VE technology with the Trefftz setting in a nonconforming fashion (\textit{\`{a} la} Crouzeix-Raviart)
following the pioneering works on nonconforming VEM for elliptic problems~\cite{nonconformingVEMbasic,cangianimanzinisutton_VEMconformingandnonconforming}
and their extension to other problems \cite{cangianimanzinisutton_VEMconformingandnonconforming, ncHVEM, CGM_nonconformingStokes, gardini2018nonconforming, nc_VEM_NavierStokes, zhao2016nonconforming, VEM_fullync_biharmonic,cao2018anisotropic}.

This nonconforming Trefftz-VEM, which can be regarded as a generalization of the nonconforming harmonic VEM~\cite{ncHVEM}, is ``morally'' comparable to many other Trefftz methods for the Helmholtz equation such as
the ultra weak variational formulation \cite{cessenatdespres_basic}, the wave based method \cite{wavebasedmethod_overview},
discontinuous methods based on Lagrange multipliers \cite{farhat2001discontinuous} and on least square formulation \cite{monk1999least},
the plane wave discontinuous Galerkin method (PWDG) \cite{GHP_PWDGFEM_hversion}, and the variational theory of complex rays \cite{riou2008multiscale}; see \cite{PWDE_survey} for an overview of such methods.

It has to be mentioned that all of the above Trefftz methods are based on fully discontinuous approximation spaces. A peculiarity of the nonconforming Trefftz-VEM is that a ``weak'' notion (that is, via proper edge $L^2$ projections) of traces over the skeleton of the polytopal grid is, differently from discontinuous methods, available.

The aim of the present paper is to continue the work begun in \cite{ncTVEM_theory}, where the nonconforming Trefftz-VEM was firstly introduced, an abstract error analysis was carried out, and $\h$-version error estimates were derived.
As already mentioned in \cite{ncTVEM_theory}, the original version of the method does not result in good numerical performance, mainly because of the strong ill-conditioning of the local plane wave basis functions.

The scope of this contribution is manifold. After introducing the model problem and extending the original nonconforming Trefftz-VEM in Section~\ref{section nc Trefftz VEM}, we discuss the implementation details of the method in Section~\ref{section implementational details}. We will consider here a more general Helmholtz boundary value problem than originally done in \cite{ncTVEM_theory}, which will be reflected in the definition of the nonconforming Trefftz-VE spaces.
Then, numerical results are presented in Section \ref{section numerical results standard}, in order to clarify that, \emph{rebus sic stantibus},
the method severely suffers of ill-conditioning.
A numerical recipe based on an edgewise orthonormalization procedure to mitigate this strong ill-conditioning is presented in Section~\ref{section cure illconditioning}.
Additionally to the fact that the condition number of the resulting global matrix significantly improves, the number of degrees of freedom is reduced without deteriorating the accuracy.
To the best of our understanding, such a recipe cannot be directly applied in the framework of DG methods, see Remark~\ref{second remark on the second filtering}.
After testing the modified version of the method in several experiments, including an acoustic scattering problem, we compare its performance with that of PWVEM and PWDG.
The new approach turns out to be very competitive, when compared to existing technologies, especially in the high-order case and when approximating highly oscillatory problems.
Moreover, we numerically study the $\p$- and $\h\p$-versions of the method, experimentally assessing exponential convergence for analytic and singular solutions in the former and latter cases, respectively.

\section{The nonconforming Trefftz virtual element method} \label{section nc Trefftz VEM}

In this section, after introducing the notation and presenting the continuous model problem, we recall the nonconforming Trefftz-VEM of~\cite{ncTVEM_theory}.

Throughout the paper, we will denote by $H^s(\mathcal{D})$, $\mathcal{D} \subset \R^2$, the Sobolev space of order $s \in \N$ over the complex field $\C$. For fractional $s$, the corresponding Sobolev spaces can be defined via interpolation theory, see e.g.~\cite{Triebel}.
In addition, we will employ the standard notation for sesquilinear forms, norms and seminorms 
\begin{equation*}
(\cdot,\cdot)_{s,\mathcal{D}}, \quad \lVert \cdot \rVert_{s,\mathcal{D}}, \quad \lvert \cdot \rvert_{s,\mathcal{D}}.
\end{equation*}
The model problem we are interested in is a homogeneous Helmholtz boundary value problem with mixed boundary conditions. More precisely, given $\Omega \subset \mathbb R^2$ a bounded polygonal domain, we split its boundary $\partial \Omega$ into
\begin{equation} \label{decomposition boundary}
\partial \Omega = \overline{\GammaD} \cup \overline{\GammaN} \cup \overline{\GammaR} ,\quad \GammaD \cap \GammaN = \emptyset ,\quad \GammaD \cap \GammaR = \emptyset, \quad \GammaN \cap \GammaR = \emptyset, \quad |\GammaR|>0.
\end{equation}
The strong formulation of the continuous problem  reads
\begin{equation} \label{HH continuous problem}
\left\{
\begin{alignedat}{2}
\text{find } u \in H^1(\Omega) \text{ such that} \hspace{-1.5cm} &&\\
-\Delta u -\k^2 u &= 0 	&&\quad \text{in } \Omega\\
u &= \gD &&\quad \text{on } \GammaD\\
\nabla u \cdot \nOmega &= \gN &&\quad \text{on } \GammaN\\
\nabla u \cdot \nOmega + \im \k \theta u  &= \gR &&\quad \text{on } \GammaR,\\
\end{alignedat}
\right.
\end{equation}
where $\k>0$ is the wave number (with corresponding wave length $\lambda=\frac{2\pi}{\k}$), $\im$ is the imaginary unit, $\nOmega$ denotes the unit normal vector on $\partial \Omega$ pointing outside $\Omega$, $\theta \in \{-1,1\}$,
$\gD \in H^{\frac{1}{2}}(\GammaD)$, $\gN \in H^{-\frac{1}{2}}(\GammaN)$, and $\gR \in H^{-\frac{1}{2}}(\GammaR)$.

The corresponding weak formulation reads
\begin{equation} \label{weak continuous problem}
\begin{cases}
\text{find } u \in \VgD \text{ such that}\\
\b(u,v) = \langle F, v \rangle \quad \forall v \in \Vz,
\end{cases}
\end{equation}
where
\[
\VgD := H^1_{\gD,\GammaD}(\Omega) = \left\{ v \in H^1(\Omega) \,:\, v_{|_{\GammaD}} =\gD \right\},\quad 
\Vz := H^1_{0,\GammaD}(\Omega) = \left\{ v \in H^1(\Omega) \,:\, v_{|_{\GammaD}} =0 \right\}
\]
and
\[
\b(u,v) := \a(u,v) + \im \k \theta \int_{\Gamma_R} u \overline v \, \ds, \quad
\langle F,v \rangle := \int_{\GammaN} \gN \overline v \, \ds  +  \int _{\GammaR} \gR \overline v \, \ds \quad \forall u, v \in H^1(\Omega),
\]
with
\[
\a(u,v) := \int_{\Omega} \nabla u \cdot \overline{\nabla v} \, \dx - \k^2 \int_\Omega u \overline v \, \dx \quad \forall u,v \in H^1(\Omega).
\]
Since we are assuming that $\vert \GammaR \vert>0$, see~\eqref{decomposition boundary}, existence and uniqueness of solutions to the problem~\eqref{HH continuous problem} follow from the Fredholm alternative and a continuation argument.
\begin{thm} \label{thm well-posedness HH}
Under the assumptions \eqref{decomposition boundary} on $\Omega$, problem \eqref{HH continuous problem} is uniquely solvable.
\end{thm}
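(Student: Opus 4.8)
The plan is to follow the Fredholm-alternative-plus-continuation route announced just before the statement: recast the weak problem \eqref{weak continuous problem} as an operator equation that is a compact perturbation of a coercive one, invoke the Fredholm alternative to reduce existence to uniqueness, and then establish uniqueness by a unique continuation argument exploiting $\lvert \GammaR \rvert > 0$. First I would remove the inhomogeneous Dirichlet datum: since $\gD \in H^{1/2}(\GammaD)$, choose a lifting $u_D \in H^1(\Omega)$ with trace $\gD$ on $\GammaD$ and write $u = u_0 + u_D$ with $u_0 \in \Vz$, so that \eqref{weak continuous problem} becomes the problem of finding $u_0 \in \Vz$ with $\b(u_0, v) = \langle F, v \rangle - \b(u_D, v)$ for all $v \in \Vz$, whose right-hand side is a bounded antilinear functional on $\Vz$. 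It therefore suffices to analyse the sesquilinear form $\b$ on $\Vz \times \Vz$.

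Next I would exhibit a G\aa rding inequality. Taking real parts and using that the Robin term is purely imaginary,
\[
\Re\, \b(v,v) = \lVert \nabla v \rVert_{0,\Omega}^2 - \k^2 \lVert v \rVert_{0,\Omega}^2,
\]
so that adding $(\k^2 + 1)\lVert v \rVert_{0,\Omega}^2$ reproduces the full squared $H^1(\Omega)$-norm. Concretely, $\b = (\cdot,\cdot)_{1,\Omega} - (\k^2+1)(\cdot,\cdot)_{0,\Omega} + \im\k\theta \int_{\GammaR} \cdot\,\overline{\cdot}\,\ds$, i.e.\ the $H^1(\Omega)$-inner product plus two lower-order forms. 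Equipping $\Vz$ with the $H^1(\Omega)$-inner product, Riesz representation lets me write $\b$ as $I + \mathcal K$, where $\mathcal K$ is compact: the volume term is compact through the Rellich--Kondrachov embedding $H^1(\Omega) \hookrightarrow L^2(\Omega)$, and the boundary term through compactness of the trace operator $H^1(\Omega) \to L^2(\partial\Omega)$. Hence $I + \mathcal K$ is Fredholm of index zero, and solvability for every right-hand side is equivalent to injectivity.

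Finally I would prove uniqueness, which is where the real work lies. Let $u \in \Vz$ satisfy $\b(u,v) = 0$ for all $v \in \Vz$; testing with $v = u$ and taking imaginary parts gives $\k\theta \int_{\GammaR} \lvert u \rvert^2\,\ds = 0$, so $u = 0$ on $\GammaR$ because $\k > 0$ and $\theta \in \{-1,1\}$. Returning to the strong formulation \eqref{HH continuous problem} with homogeneous data, the Robin condition then forces $\nabla u \cdot \nOmega = -\im\k\theta u = 0$ on $\GammaR$ as well, so $u$ has vanishing Cauchy data on the boundary piece $\GammaR$, which has positive measure by \eqref{decomposition boundary}.

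The hard part will be this last step: invoking the unique continuation principle to upgrade vanishing Cauchy data on a boundary portion to $u \equiv 0$. Here Holmgren's theorem applies, since $-\Delta - \k^2$ has constant (hence real-analytic) coefficients and $\partial\Omega$ is piecewise analytic; it yields that $u$ vanishes in a neighbourhood of an edge contained in $\GammaR$, and then connectedness of $\Omega$ propagates the vanishing throughout, giving $u \equiv 0$. Uniqueness thus holds, and existence for every right-hand side follows from the Fredholm alternative, which proves unique solvability.
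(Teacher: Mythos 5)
Your proof is correct, and its skeleton is the same as the paper's: continuity plus a G\aa{}rding inequality, the Fredholm alternative to reduce existence to uniqueness, the imaginary-part trick giving $u=0$ and then $\nabla u \cdot \nOmega=0$ on $\GammaR$, and finally unique continuation from vanishing Cauchy data. Two steps are realized differently, though. First, the paper uses the classical form of the alternative and checks triviality of the kernel of the homogeneous \emph{adjoint} problem (the one with the sign of the Robin term flipped), while you use the index-zero Fredholm property to reduce directly to injectivity of the operator itself; these are equivalent, and the same imaginary-part computation kills either kernel, so this difference is cosmetic (your explicit lifting of $\gD$ and the Rellich/compact-trace argument are likewise just a fleshed-out version of what the paper cites from McLean). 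Second, and more substantively, the continuation step differs: the paper extends $u$ by zero across $\GammaR$ into an open set $U$ with $\meas(U\setminus \overline{\Omega})>0$, verifies that the extension is still a weak solution of the Helmholtz equation in $\Omega \cup U$ (this is precisely where the vanishing Cauchy data enter, and it keeps everything at the $H^1$ level), and then applies Aronszajn's \emph{interior} unique continuation principle, since the extension vanishes on the open set $U\setminus\overline{\Omega}$; you instead invoke Holmgren's theorem directly at the flat (hence analytic, and non-characteristic by ellipticity) edges of $\GammaR$. Both are legitimate. The paper's route buys generality and rigor cheaply: Aronszajn's theorem tolerates non-analytic coefficients (relevant to the variable wave numbers mentioned in the paper's conclusions), and the zero-extension device sidesteps the question of in which sense an $H^1$ solution has Cauchy data on the boundary, whereas a boundary version of Holmgren for distributional solutions needs exactly that kind of care (indeed, it is often \emph{proved} by the same zero-extension trick). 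One genuine gloss in your last sentence: connectedness of $\Omega$ alone does not propagate the vanishing; you also need that $u$ is real-analytic in $\Omega$ (true here by interior elliptic regularity for the constant-coefficient operator $-\Delta-\k^2$), or an interior unique continuation principle, after which the identity theorem on the connected open set $\Omega$ concludes. With that point made explicit, your argument is complete.
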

\begin{proof}
We first note that the sesquilinear form $b(\cdot,\cdot)$ in~\eqref{weak continuous problem} is continuous and satisfies a G\aa{}rding inequality \cite[p.118]{mclean2000strongly}. 
Owing to the Fredholm alternative \cite[Thm. 4.11, 4.12]{mclean2000strongly}, the problem \eqref{HH continuous problem} admits a unique solution if and only if the homogeneous adjoint problem to \eqref{HH continuous problem} with homogeneous boundary conditions,
which is obtained by switching the sign in front of the boundary integral term over $\GammaR$ in $b(u,v)$, admits only the trivial solution~$0$.\\
In order to show this, we consider the variational formulation of the homogeneous adjoint problem with homogeneous boundary conditions, we test with~$v=u$, and we take the imaginary part, thus deducing~$u=0$ on $\GammaR$.
In particular, also $\nabla u \cdot \nOmega=0$, due to the definition of the impedance trace.\\
Let now $U \subset \R^2$ be an open, connected set such that $U \cap \partial \Omega = \GammaR$ and $\meas(U \backslash \overline{\Omega})>0$.
We define $\widetilde{\Omega}:=\Omega \cup U$ and $\widetilde{u}: \, \widetilde{\Omega} \to \C$ as the extension of $u$ by zero in $ \widetilde{\Omega} \setminus \Omega$.
Then $\widetilde{u}$ solves a homogeneous Helmholtz equation in $\widetilde{\Omega}$; applying the unique continuation principle, see e.g.~\cite{aronszajn1957unique}, leads to~$\widetilde{u}=0$ in $\widetilde{\Omega}$, and therefore~$u=0$ in~$\Omega$.
\end{proof}
We highlight that the existence and the uniqueness of solutions can also be shown for more general Helmholtz-type boundary value problems, see e.g. \cite{stability_helmholtz_graham_sauter}. 

\medskip
Let now $\taun$ be a decomposition of $\Omega$ into polygons $\{\E\}$ with mesh size $h:=\max_{\E \in \taun} \, \hE$, where $\hE:=\textup{diam}(\E)$ for all $\E \in \taun$. Further, we introduce $\En$, $\EnI$ and $\Enb$, 
the set of edges, interior edges, and boundary edges of $\taun$, respectively.
We assume that the boundary edges comply with respect to the decomposition~\eqref{decomposition boundary}, that is, for all boundary edges $\e \in \Enb$,
$\e$ is contained in only one amidst $\GammaD$, $\GammaN$, and $\GammaR$.
In the sequel, we will use the following notation for the set of ``Dirichlet, Neumann, and impedance (Robin)'' edges:
\[
\EnD=\{ \e \in \Enb \, : \, \e \subseteq \GammaD \},\quad  \EnN = \{ \e \in \Enb \, : \, \e \subseteq \GammaN \}, \quad \EnR=\{ \e \in \Enb \, : \, \e \subseteq \GammaR \}.
\]
For any polygon $\E \in \taun$, we denote by $\EE$ the set of its edges, by $\xE$ its centroid, and by $\Ne$ the cardinality of $\EE$. Finally, given any $\e\in\EE$, we denote by $\xe$ its midpoint, and by $\he$ its length.
The normal unit vector pointing outside $\E$ is denoted by $\nE$.

Next, we define plane wave spaces in the bulk of the elements of $\taun$ and on the edges. To this purpose, fix $p=2q+1$, $q \in \N$, and let $\{ \dl \}_{\ell \in \J}$ be a set of pairwise different and normalized directions, where $\J:=\{1,\dots,p\}$.
For every $\E \in \taun$ and $\ell \in \J$, we define the local plane wave space on $\E$ by
\begin{equation} \label{plane wave bulk space}
\PWE:=\lin \left\{ \wlE \, , \, \ell \in \J \right\},
\end{equation}
where $\wlE(\x):={e^{\im\k \dl \cdot (\x-\xE)}}_{|_\E}$ denotes for all $\ell \in \J$ the plane wave centered in $\xE$ and travelling along the direction $\dl$.
As~$\q$ plays the same role as the polynomial degree in the approximation properties of plane wave spaces, we refer to $\q$ as {\em effective plane wave degree}.

Analogously, given any edge $\e \in \En$, we introduce $\PW_p(\e)$ as the span of the traces of plane waves generating the space $\PWE$ on $\e$, namely $\wle(\x):=e^{\im\k \dl \cdot (\x-\xe)}{}_{|_\e}$, $\ell \in \J$.

We note that, in the definition of the bulk and edge plane waves, we also consider a shift by the barycenters of the elements and the midpoints of the edges, respectively.
This actually does not change the nature of the basis since it simply results in a multiplication between a nonshifted plane wave with a constant. However, this additional notation may be of help when implementing the method, as it helps to remember when dealing with bulk and/or edge plane waves,
see Section~\ref{section implementational details}.

It holds that $\dim(\PWE)=\p$ for all $\E\in \taun$, but in general $\dim(\PW_p(\e))\le\p$ for all $\e\in \En$. In fact, if
\begin{equation} \label{filter_rel}
\djj \cdot (\x-\xe)=\dl \cdot (\x-\xe) \quad \forall \x \in \e,
\end{equation}
for some $j,\ell \in \{1,\dots,p\}$, $j>\ell$, then $\wje(\x)=\wle(\x)$ on $e$.

Thus, in order to avoid the presence of linearly dependent edge plane waves, we have to remove redundant plane waves on the edge $\e$.
Further, for theoretical purposes, we also require constant functions to be contained in the edge plane wave spaces in \cite{ncTVEM_theory};
such choice was instrumental for proving best approximation results in terms of functions in nonconforming Trefftz-VE spaces.
Therefore, we add one of the two normal vectors associated with the edge $\e$, whenever it is not already contained in the original set of directions.
This whole procedure goes under the name of \emph{filtering process} and was firstly described in \cite{ncTVEM_theory}. For the sake of completeness, we report it in Algorithm~\ref{algorithm filtering process}.
In Figure \ref{fig:directions after filtering}, we depict all possible configurations of distributions of the plane wave directions over the edges.
\begin{algorithm} 
\caption{\textit{Filtering process}}
\label{algorithm filtering process}
For all edges $e \in \En$:
\begin{enumerate}
\item Remove redundant plane waves
\begin{itemize}
\item Initialize $\Jeprime:=\J:=\{1,\dots,p\}$;
\item For all indices in $\Jeprime$, check whether \eqref{filter_rel} is satisfied;
\item Whenever this is the case for some pair $j,\ell \in \Jeprime$ with $j>\ell$, remove index $j$ from $\Jeprime$;
\end{itemize}
\item Add the constants
\begin{itemize}
\item Check whether there exists a direction $\dstar \in \{\dl\}_{\ell \in \J}$ such that
\begin{equation*}
\dstar \cdot (\x-\xe)=0 \quad \forall \x \in \e;
\end{equation*}
\item If this is the case, set $\Je:=\Jeprime$; otherwise, set $\Je:=\Jeprime \cup \{ p+1 \}$ and $w_{p+1}^e(\x):=1$. 
\end{itemize}
\end{enumerate}
\end{algorithm}
\begin{figure}[h]
\centering
\begin{subfigure}[b]{0.4\textwidth}
\centering
\includegraphics[width=0.8\textwidth]{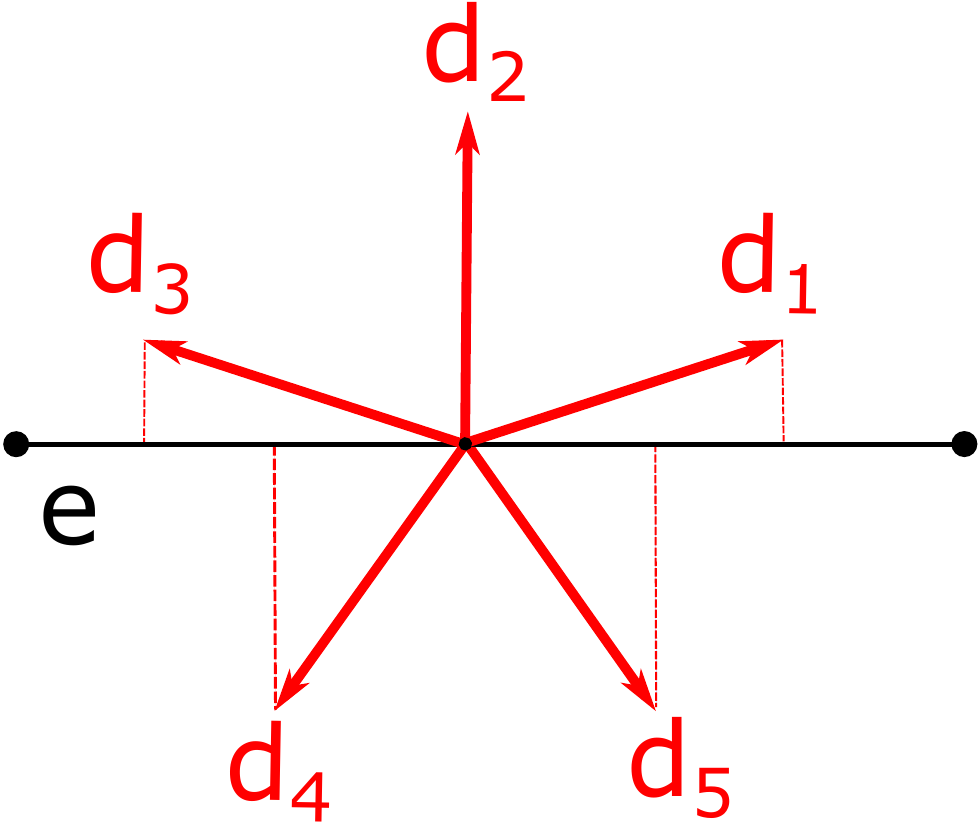}
\caption[]
{{\small No direction eliminated, orthogonal direction already included.}}    
\end{subfigure}
\hfill
\begin{subfigure}[b]{0.4\textwidth}  
\centering 
\includegraphics[width=0.8\textwidth]{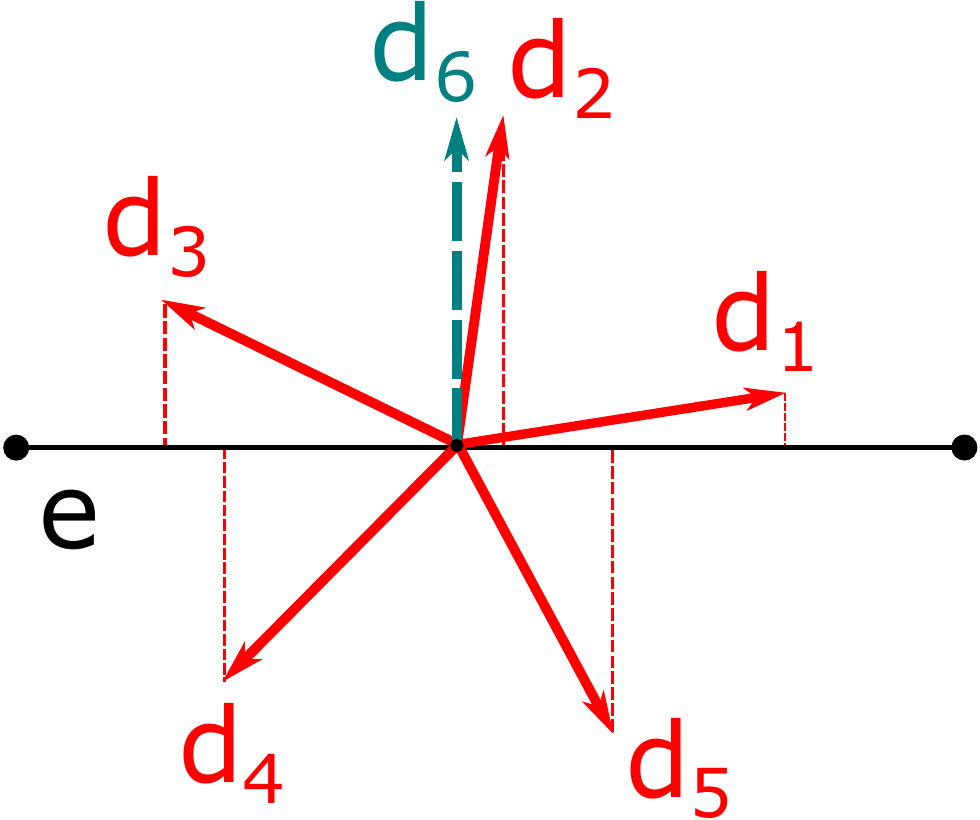}
\caption[]%
{{\small No direction eliminated, orthogonal direction not yet included.}}    
\end{subfigure}
\vskip\baselineskip
\begin{subfigure}[b]{0.4\textwidth}   
\centering 
\includegraphics[width=0.8\textwidth]{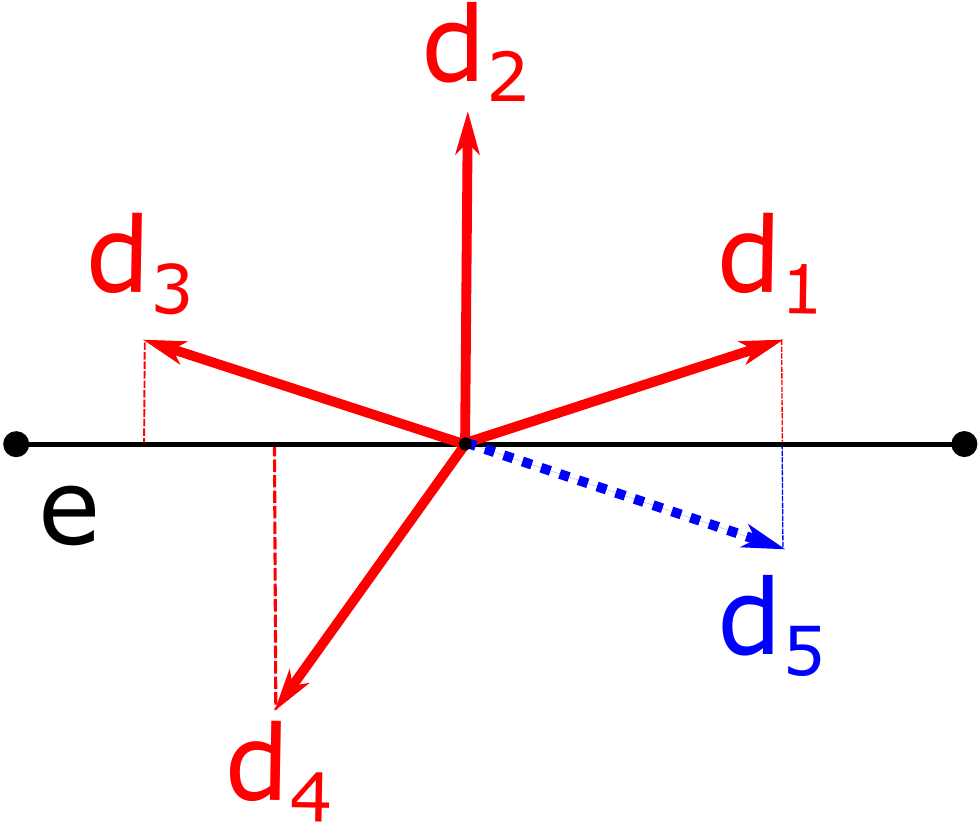}
\caption[]%
{{\small One direction eliminated, orthogonal direction already included.}}    
\end{subfigure}
\hfill
\begin{subfigure}[b]{0.4\textwidth}   
\centering 
\includegraphics[width=0.8\textwidth]{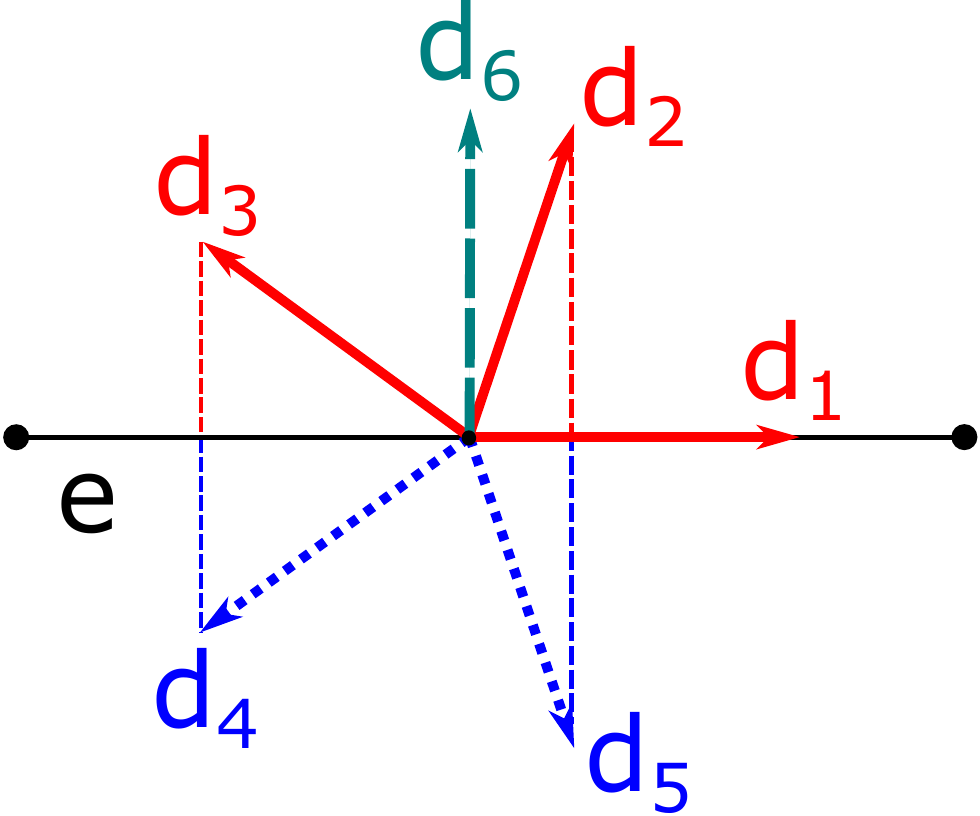}
\caption[]%
{{\small Two directions eliminated, orthogonal direction not yet included.}}    
\end{subfigure}
\caption[The average and standard deviation of critical parameters]
{\small \emph{Filtering process}.
We depict all the possible configurations. In solid lines, the directions that are kept; in dotted lines, the directions that are eliminated accordingly with \eqref{filter_rel}; in dashed lines, the orthogonal direction that has to be possibly added in order to include constants.}
\label{fig:directions after filtering}
\end{figure}

After having performed the filtering process, for every edge $e \in \En$, we define
\begin{equation} \label{edge space with constant}
\PWc(e):=\lin \left\{ \wle \, , \, \ell \in \Je \right\},
\end{equation}
and $\pe:=\dim(\PWc(e)) \le \p+1$.

Next, for any $\E \in \taun$, we introduce  the {\it local} Trefftz-VE space
\begin{equation} \label{local Trefftz-VE space}
\begin{split}
\VE:= \big\{\vn \in H^1(\E) \,\, | \,\, \Delta \vn+k^2 \vn = 0 \, \text{ in }\E, \quad \vn{}_{|_\e} \in \PWc(e) \quad &\forall e \in \EE \cap (\EnD \cup \EnN), \\ \,\gammaIK(\vn)_{|_\e} \in \PWc(e) \quad &\forall e \in \EE \setminus (\EnD \cup \EnN)  \big\},
\end{split}
\end{equation}
where we have set the element impedance trace $\gammaIK(\vn):=\nabla \vn \cdot \nE +\im \k \theta \vn$.

Note that it holds $\PWE \subset \VE$, but $\VE$ also contains other functions whose explicit representation is not available in closed form. This gives rise to the term \textit{virtual} in the name of the method. For future use, we denote $\pE:=\dim(\VE)=\sum_{e \in \EE} \pe$.

Setting $\ME:=\{1,\dots,\Ne\}$, on every $\E \in \taun$, we introduce a set of functionals defined as the moments on each edge $\e_r \in \EE$, $r \in \ME$, with respect to functions in the space $\PWc(e_r)$ given in~\eqref{edge space with constant}:
\begin{equation} \label{dofs}
\dof_{r,j}(\vn):=\frac{1}{\h_{e_r}} \int_{e_r} \vn \overline{w_j^{e_r}} \, \ds \quad \forall r \in \ME, \, \forall j \in \Jer.
\end{equation}
This set constitutes a set of degrees of freedom, as proven in the forthcoming result.

\begin{lem} \label{lemma dofs}
Assume that $\k^2$ is not a Dirichlet-Laplace eigenvalue on the element $\E$. Then, the set of functionals in~\eqref{dofs} defines a set of unisolvent degrees of freedom for the local space $\VE$ introduced in~\eqref{local Trefftz-VE space}.
\end{lem}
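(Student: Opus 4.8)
The plan is to rely on the dimension count: the total number of functionals in~\eqref{dofs} equals $\sum_{r \in \ME} \card(\Jer) = \sum_{e \in \EE} \pe = \pE = \dim(\VE)$, since there are exactly $\pe = \dim(\PWc(e))$ moments on each edge. Consequently, unisolvence reduces to injectivity, i.e.\ to showing that if $\vn \in \VE$ satisfies $\dof_{r,j}(\vn)=0$ for all $r \in \ME$ and all $j \in \Jer$, then $\vn \equiv 0$. The observation I would record at the outset and use repeatedly is that the vanishing of all moments on an edge $e$ is equivalent to the $L^2(e)$-orthogonality $\int_e \vn \overline{w}\,\ds = 0$ for every $w \in \PWc(e)$.

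First I would dispose of the Dirichlet and Neumann edges. By the definition of $\VE$ in~\eqref{local Trefftz-VE space}, on each $e \in \EE \cap (\EnD \cup \EnN)$ the trace $\vn{}_{|_e}$ already belongs to $\PWc(e)$; choosing $w = \vn{}_{|_e}$ in the orthogonality relation gives $\int_e |\vn|^2\,\ds = 0$, hence $\vn{}_{|_e}=0$ on all such edges.

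The core of the argument is an energy identity. Since $\Delta \vn + \k^2 \vn = 0$ in $\E$, multiplying by $\overline{\vn}$ and integrating by parts yields
\begin{equation*}
\int_{\partial \E} (\nabla \vn \cdot \nE)\,\overline{\vn}\,\ds = \int_\E |\nabla \vn|^2\,\dx - \k^2 \int_\E |\vn|^2\,\dx,
\end{equation*}
so the boundary integral on the left is \emph{real}. I would then rewrite the normal derivative through the impedance trace, $\nabla \vn \cdot \nE = \gammaIK(\vn) - \im \k \theta \vn$, and split $\partial \E$ edge by edge. The Dirichlet and Neumann edges contribute nothing, since there $\vn{}_{|_e}=0$. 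On each remaining edge $e \in \EE \setminus (\EnD \cup \EnN)$ one has $\gammaIK(\vn)_{|_e} \in \PWc(e)$ by the definition of $\VE$; expanding it in the basis $\{\wje\}_{j \in \Je}$ and invoking the orthogonality relation (after taking a complex conjugate) gives $\int_e \gammaIK(\vn)\,\overline{\vn}\,\ds = 0$. Only the impedance term survives, so that
\begin{equation*}
\int_{\partial \E} (\nabla \vn \cdot \nE)\,\overline{\vn}\,\ds = -\im \k \theta \int_{\partial \E} |\vn|^2\,\ds,
\end{equation*}
which is purely imaginary because $\k>0$ and $\theta \in \{-1,1\}$. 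A real quantity coinciding with a purely imaginary one must vanish; in particular $\int_{\partial \E} |\vn|^2\,\ds = 0$, i.e.\ $\vn = 0$ on the whole of $\partial \E$.

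Finally, $\vn$ solves the homogeneous Helmholtz equation in $\E$ with vanishing Dirichlet trace, and the hypothesis that $\k^2$ is not a Dirichlet--Laplace eigenvalue on $\E$ forces $\vn \equiv 0$, which closes the injectivity argument and hence proves unisolvence. The main obstacle is the third step: one must verify that recasting the normal derivative via $\gammaIK$, combined with the edge orthogonality, collapses the boundary term into a purely imaginary expression. This is precisely the mechanism that lets the single assumption on $\k^2$ finish the proof, and it mirrors the imaginary-part argument already used in the proof of Theorem~\ref{thm well-posedness HH}.
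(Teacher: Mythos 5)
Your proof is correct and follows essentially the same route as the paper's: vanishing moments force $\vn{}_{|_e}=0$ on Dirichlet/Neumann edges, an integration by parts combined with the membership $\gammaIK(\vn)_{|_e} \in \PWc(e)$ and the edge orthogonality yields the energy identity, and taking the imaginary part (your ``real equals purely imaginary'' step) gives $\vn=0$ on $\partial\E$, after which the Dirichlet--Laplace eigenvalue assumption concludes. The only cosmetic difference is that you make the dimension count and the uniform treatment of the case $\EE\cap(\EnD\cup\EnN)=\emptyset$ explicit, whereas the paper delegates the latter to \cite[Lemma 3.1]{ncTVEM_theory}.
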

\begin{proof}
If $\EE\cap (\EnD \cup \EnN) = \emptyset$, the proof is identical to that of \cite[Lemma 3.1]{ncTVEM_theory}. Otherwise, we observe that, if $\vn \in \VE$ is such that all the associated functionals in~\eqref{dofs} are zero, then ${\vn}_{|_e}=0$ on each edge $e \in \EE\cap (\EnD \cup \EnN)$, due to the fact that ${\vn}_{|_e} \in \PWc(\e)$, together with the definition of the degrees of freedom. This, combined with an integration by parts, leads to
\[
\vert \vn \vert^2_{1,\E} - \k^2 \Vert \vn \Vert^2_{0,\E} - \im \k \theta \Vert \vn \Vert^2_{0,\partial \E \setminus (\GammaD \cup \GammaN)} = \int_{\partial \E \setminus (\GammaD \cup \GammaN)} \vn \overline{\gammaIK(\vn)} \, \ds=0.
\]
Taking the imaginary part finally gives $\vn = 0$ on $\partial \E \backslash (\GammaD \cup \GammaN)$, and therefore $\vn=0$ on $\partial \E$. Next, recalling that $\vn$ belongs to the kernel of the Helmholtz operator and $\k^2$ is not a Dirichlet-Laplace eigenvalue, we deduce $\vn=0$ in $\E$, which is the assertion.
\end{proof}

Having this, the set of local canonical basis functions $\{\varphi_{s,\ell}\}_{s \in \ME, \ell \in \Jes}$ associated with the set of degrees of freedom \eqref{dofs} is defined as
\begin{equation} \label{definition canonical basis}
\dof_{r,j}(\varphi_{s,\ell}) = \delta_{r,s} \delta_{j,\ell}
\quad \forall r,s \in \ME,\, \forall j \in \Jer, \, \forall \ell \in \Jes,
\end{equation}
where $\delta$ is the Kronecker delta.

Next, we construct the global Trefftz-VE space, assuming uniform $p$; the case when $p$ may vary from element to element is discussed in Section \ref{subsubsection hp version} below. We need to fix some additional notation. Firstly, we define the broken Sobolev space associated with the decomposition $\taun$ by
\begin{equation*}
H^1(\taun):=\prod_{\E \in \taun} H^1(\E) = \{ v \in L^2(\Omega): v_{|_\E} \in H^1(\E) \quad \forall \E \in \taun \},
\end{equation*}
endowed with the corresponding weighted broken Sobolev norm
\begin{equation*} 
\lVert v \rVert_{1,\k,\taun}^2:=\sum_{\E \in \taun} \lVert v \rVert_{1,\k,\E}^2 
=\sum_{\E \in \taun} \left( \lvert v \rvert_{1,\E}^2 + \k^2 \lVert v \rVert_{0,\E}^2 \right).
\end{equation*}
Secondly, we pinpoint the global nonconforming Sobolev space associated with $\taun$ incorporating in a nonconforming fashion a Dirichlet boundary datum $\gtilde \in H^{\frac{1}{2}}(\GammaD)$:
\begin{equation} \label{non conforming Sobolev}
\begin{split}
H^{1,nc}_{\gtilde}(\taun):=\{ v \in H^1(\taun)\,: \, \int_{e} (v^+ - v^-) \, \overline{\we} \, \ds &=0 \, \quad \forall \we \in \PWc(e), \, \forall e \in \EnI, \\
\int_{e} (v - \gtilde)  \overline{\we} \, \ds &= 0  \, \quad \forall \we \in \PWc(e), \, \forall e \in \EnD  \},\\
\end{split}
\end{equation}
where, on each internal edge $e \in \mathcal{E}_n^I$ with $e \subseteq \partial{\E^-} \cap \partial {\E^+}$ for some $\E^-$, $\E^+ \in \taun$, the functions $v^-$ and $v^+$ are the Dirichlet traces of $v$ from $\E^-$ and $\E^+$, respectively. 

The \emph{global} nonconforming Trefftz-VE trial and test spaces are given by
\begin{equation} \label{global trial Trefftz space}
\VhgD\:=\{ \vn \in H^{1,nc}_{\gD} (\taun): \, v_{\h|_\E} \in \VE \quad \forall \E \in \taun \}
\end{equation}
and
\begin{equation} \label{global test Trefftz space}
\Vhz\:=\{ \vn \in H^{1,nc}_{0} (\taun): \, v_{\h|_\E} \in \VE \quad \forall \E \in \taun \},
\end{equation}
respectively. In both cases, the set of global degrees of freedom is obtained by coupling the local degrees of freedom on the interfaces between elements.
\begin{remark} \label{remark on D boundary conditions}
Owing to the definition \eqref{non conforming Sobolev}, the Dirichlet boundary conditions are imposed weakly, via the definition of moments with respect to plane waves. At the computational level, one can approximate $\gD$ by taking a sufficiently high-order Gau\ss-Lobatto interpolant.
\end{remark}
With these ingredients at hand, we recall the construction of the method from \cite{ncTVEM_theory}. To this purpose, we first fix the notation for the local sesquilinear forms over $K \in \taun$:
\begin{equation*}
\aE(u,v) := \int_{\E} \nabla u \cdot \overline{\nabla v} \, \dx - \k^2 \int_\E u \overline v \, \dx \quad \forall u,v \,\in H^1(\E).
\end{equation*}
Then, for a given $\E \in \taun$,  we define the local projector
\begin{equation} \label{projector aK}
\begin{split}
\Pip: \, &\VE \rightarrow \PWE \\
&\aE(\Pip \un, \wE) = \aE(\un, \wE) \quad \forall \un \in \VE,\, \forall \wE \in \PWE.
\end{split}
\end{equation}
Using an integration by parts, one can observe that $\Pip$ is indeed computable without explicit knowledge of the Trefftz-VE functions in the bulk of $\E$, thanks to the choice of the degrees of freedom in \eqref{dofs}.

\begin{remark}\label{remark eig}
In~\cite[Proposition 3.2]{ncTVEM_theory}, it was proven that, whenever $\k^2$ is not a Neumann-Laplace eigenvalue in~$\E$, the projector $\Pip$ in~\eqref{projector aK} is well-defined and continuous. In order to numerically investigate this condition,
we plot the minimal (absolute) eigenvalues of the matrix $\boldsymbol{A}^{\widehat{\E}}:=\{ a^{\widehat{\E}}(w_\ell^{\widehat{\E}},w_j^{\widehat{\E}}) \}_{\ell,j=1,\dots,p}$ in terms of the wave number $k$ on the reference element $\widehat{\E}=(0,1)^2$, see Figure \ref{fig:min_eig}. On this domain, the Neumann-Laplace eigenvalues $\nu_{m,n}$ are known explicitly:
\begin{equation*}
\nu_{m,n}=\pi^2 (m^2+n^2), \quad m,n \in \N_0.
\end{equation*}
\begin{figure}[h]
\begin{center}
\includegraphics[width=0.5\textwidth]{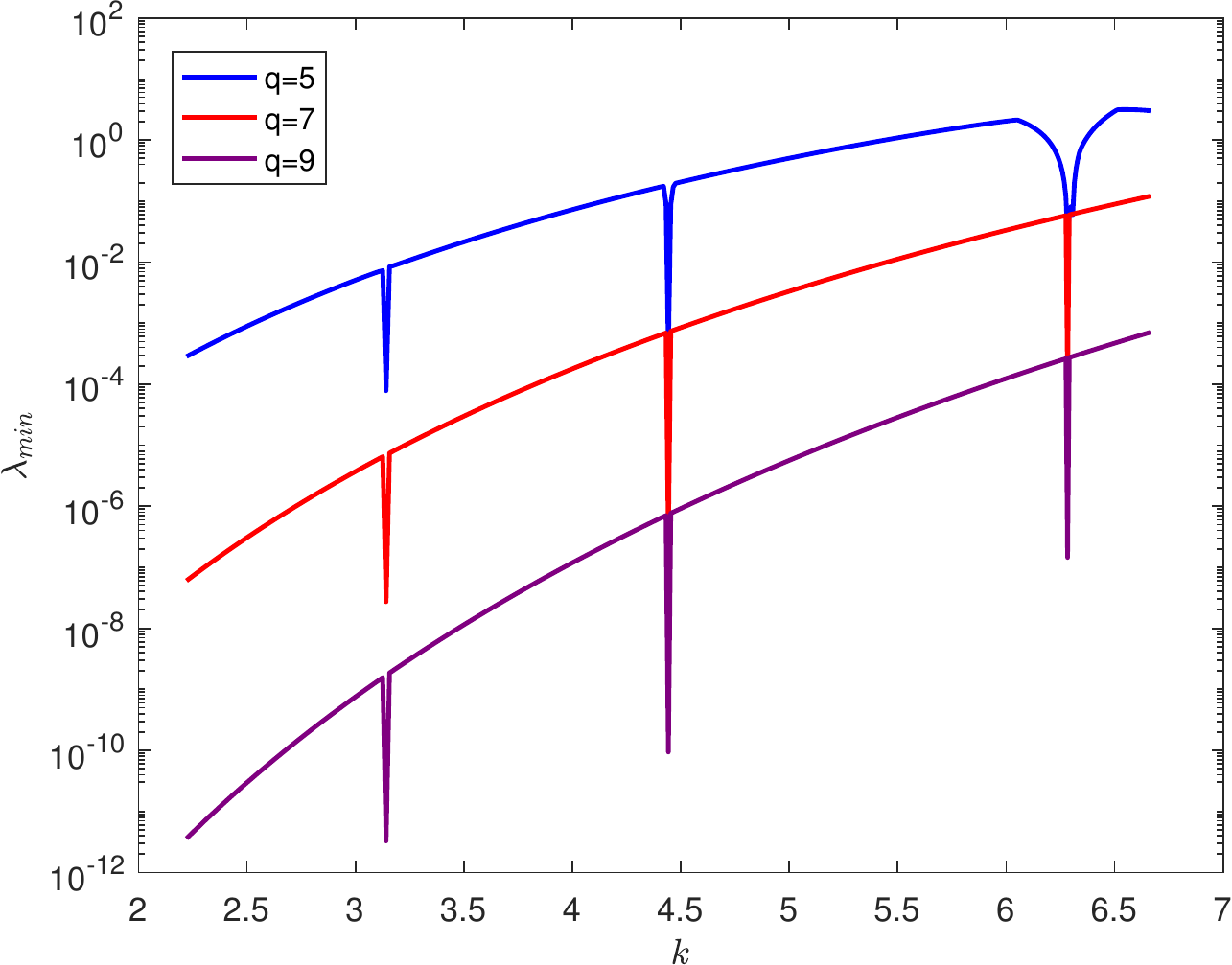}
\end{center}
\caption{Minimal (absolute) eigenvalues of the matrix $\boldsymbol{A}^{\widehat{\E}}$, see Remark \ref{remark eig}.}
\label{fig:min_eig}
\end{figure}
We observe that, for wave numbers $k$ close to the square roots of the eigenvalues $\nu_{m,n}$, the minimal (absolute) eigenvalue of $\boldsymbol{A}^{\widehat{\E}}$ is actually some orders of magnitude lower than outside the neighborhoods of $\sqrt{\nu_{m,n}}$.
Therefore, when~$\k^2$ is close to a Neumann-Laplace eigenvalue, the continuity constant of $\Pip$ may deteriorate.

\end{remark}

On any boundary edge $\e \in \Enb$, denoting by $\Ee \in \taun$ the adjacent element of $e$, we further set the $L^2(\e)$ projector 
\begin{equation} \label{projector edge L2}
\begin{split}
\Pie: \, & \VEe_{|_e} \rightarrow \PWc(\e) \\
&\int_e (\Pie \un) \overline{\we} \, \ds = \int_e \un \overline{\we} \, \ds \quad \forall \un\in\VEe,\,\forall \we \in \PWc(\e).
\end{split}
\end{equation}
Again by~\eqref{dofs}, this projector is computable as well. In the sequel, we will use the notation $\Pi_p^{0,\omega}$ to denote the $L^2$ projector onto the space $\prod_{e \in \omega} \PWc(\e)$
defined edgewise by~\eqref{projector edge L2}, where $\omega$ is either $\GammaR$ or $\GammaN$.

We highlight that the method is not obtained by simply substituting the spaces $\VgD$ and $\Vz$ in~\eqref{weak continuous problem} by the discrete spaces $\VhgD$ and $\Vhz$.
In fact, on the one hand, an explicit representation of Trefftz-VE functions is not elementwise available in closed form, and hence $\a(\un,\vn)$ is not computable by means of the degrees of freedom \eqref{dofs} for all $\un\in \VhgD$ and $\vn \in \Vhz$.
On the other, Dirichlet traces of Trefftz-VE functions are unknown on  $\GammaR$, see \eqref{decomposition boundary}, and therefore $\langle F ,\vn \rangle$ and the term $ \im \k \theta \int_{\Gamma_R} \un \overline \vn \, \ds$ cannot be computed for all $\vn \in \Vhz$.

Following the standard VEM gospel \cite{VEMvolley}, we replace the original sesquilinear forms and right-hand sides with some computable counterparts. More precisely:
\begin{enumerate}[label=(\roman*)]
\item In order to find a suitable computable substitute for the sesquilinear form in~\eqref{weak continuous problem}
\begin{equation*}
b(u,v)=\sum_{\E \in \taun} \left[ \int_{\E} \nabla u \cdot \overline{\nabla v} \, \dx - \k^2 \int_{\E} u \overline{v} \, \dx \right] + \im\k \int_{\GammaR} u \overline{v} \, \ds,
\end{equation*}

we first make use of the definition of the projector $\Pip$ in \eqref{projector edge L2}, obtaining, for the bulk term,
\begin{equation*} 
\aE(\un,\vn) = \aE( \Pip \un, \Pip \vn) + \aE( (I-\Pip) \un, (I-\Pip) \vn) \quad \forall \un,\,\vn \in \VE.
\end{equation*}
The first term on the right-hand side is computable, but the second one is not. Hence, the latter is substituted by a proper computable sesquilinear form $\SE(\cdot,\cdot)$ mimicking $\aE(\cdot, \cdot)$, and referred to in the following as \textit{stabilization}. Therefore, we are able to introduce local discrete sesquilinear forms
\begin{equation} \label{local discrete bf}
\anE(\un,\vn) := \aE( \Pip \un, \Pip \vn) + \SE\left( (I-\Pip) \un, (I-\Pip) \vn \right) \quad \forall \un,\,\vn \in \VE.
\end{equation}
In order to guarantee the well-posedness of the method, some conditions on the choice of $\SE(\cdot,\cdot)$ are needed, see \cite[Proposition 3.4, Theorem 4.3]{ncTVEM_theory}.
We anticipate that, in Section \ref{section hversion num recipe}, we will discuss the effects of the choice of the stabilization on the numerical performance of the method. It is important to mention that the local sesquilinear form is \textit{consistent} in the sense that
\begin{equation} \label{consistency}
\anE(\vn, \wE) = \aE(\vn, \wE),\quad \anE(\wE, \vn) = \aE(\wE, \vn)\quad \forall \vn \in \VE,\, \forall \wE \in \PWE.
\end{equation}

The boundary term is instead discretized by
\begin{equation*}
\im\k \theta \int_{\GammaR} u \overline{v} \, \ds
\quad \mapsto \quad \im\k \theta \int_{\GammaR} (\PiGammaR \un) \overline{(\PiGammaR \vn)} \, \ds \quad \forall \un \in \VhgD,\, \forall \vn \in \Vhz.
\end{equation*}
Altogether, $b(u, v)$ is discretized by
\begin{equation} \label{definition bn}
\bn(\un,\vn):=\an(\un,\vn) + \im\k\theta \int_{\GammaR} (\PiGammaR \un) \overline{(\PiGammaR \vn)} \, \ds  \quad \forall \un \in \VhgD,\, \forall \vn \in \Vhz,
\end{equation}
with
\begin{equation} \label{definition ah}
\an(\un,\vn):=\sum_{\E \in \taun} \anE(\un,\vn) \quad \forall \un \in \VhgD,\, \forall \vn \in \Vhz.
\end{equation}
\item The functional
\begin{equation*}
\langle F,v \rangle=\int_{\GammaN} \gN \overline v \, \ds + \int_{\GammaR} \gR \overline v \, \ds
\end{equation*}
on the right-hand side of~\eqref{weak continuous problem} is discretized by
\begin{equation} \label{discrete rhs}
\langle \fn,\vn \rangle:= \int_{\GammaN} \gN \overline{(\PiGammaN \vn)}  + \int_{\GammaR} \gR \overline{(\PiGammaR \vn)} \, \ds \quad \vn \in \Vhz.
\end{equation}
\end{enumerate}
With these definitions, the nonconforming Trefftz-VEM reads as follows:
\begin{equation} \label{complete method}
\begin{cases}
\text{find } \un \in \VhgD \text{ such that}\\
\bn(\un,\vn) = \langle \fn,\vn \rangle \quad  \forall \vn \in  \Vhz,\\
\end{cases}
\end{equation} 
where $\bn(\cdot,\cdot)$ and $\langle \fn,\cdot \rangle$ are given in \eqref{definition bn} and \eqref{discrete rhs}, respectively.

In \cite{ncTVEM_theory}, an abstract error analysis of the method \eqref{complete method}, along with $\h$-convergence results, was proven for the case that $\GammaR = \partial \Omega$.

\section{Details on the implementation} \label{section implementational details}
In this section, we give some details concerning the implementation of the method \eqref{complete method}, involving in particular the computation of the two projectors $\Pip$ and $\Pie$ introduced in \eqref{projector aK} and \eqref{projector edge L2}, respectively.
We point out that, despite the setting of the method~\eqref{complete method} is rather different from that of standard VEM, the implementation follows the same lines; hence, we will employ the same ideas and notation as in~\cite{hitchhikersguideVEM}.

\subsection{Assembly of the global system of linear equations}
The global system of linear equations corresponding to the method \eqref{complete method} is assembled as in the standard nonconforming VEM \cite{nonconformingVEMbasic, ncHVEM} and FEM \cite{CrouzeixRaviart}.
For the sake of clarity, we first consider the case that $\GammaD=\emptyset$. The general case will be addressed in Section \ref{subsection gammaD} below.

Given $\Nedg$ the total number of edges of the mesh $\taun$, let $\{\varphi_{\tilde{s},\tilde{\ell}}\}_{\tilde{s}=1,\dots,\Nedg,\, \tilde{\ell}\in {\Je}_{\tilde{s}}}$ be the set of canonical basis functions given by~\eqref{definition canonical basis}. In this section, we use the convention that the indices hooded by a tilde denote global indices, whereas those without stand for local ones. 

Expanding $\un$ as $\sum_{\tilde{s}=1}^{\Nedg} \sum_{\tilde{\ell}=1}^{{\pe}_{\tilde{s}}}  u_{\tilde{s},\tilde{\ell}} \varphi_{\tilde{s},\tilde{\ell}}$ and plugging this ansatz into~\eqref{complete method} lead to 
\begin{equation} \label{set of equations}
\begin{split}
\sum_{\tilde{s}=1}^{\Nedg} \sum_{\tilde{\ell}=1}^{{\pe}_{\tilde{s}}} &u_{\tilde{s},\tilde{\ell}} \, \left[ \an(\varphi_{\tilde{s},\tilde{\ell}},\varphi_{\tilde{r},\tilde{j}}) + \im\k\theta \int_{\GammaR} (\PiGammaR \varphi_{\tilde{s},\tilde{\ell}}) \overline{(\PiGammaR \varphi_{\tilde{r},\tilde{j}})} \, \ds \right] \\
&= \int_{\GammaN} \gN \overline{(\PiGammaN \varphi_{\tilde{r},\tilde{j}})}  + \int_{\GammaR} \gR \overline{(\PiGammaR \varphi_{\tilde{r},\tilde{j}})} \, \ds \quad \forall \tilde{r}=1,\dots,\Nedg, \, \forall \tilde{j}=1,\dots,{\pe}_{\tilde{r}},
\end{split}
\end{equation} 
where, with a slight abuse of notation, we relabelled by $1,\dots,{\pe}_{\tilde{s}}$ the indices in ${\Je}_{\tilde{s}}$ that remain after the filtering process similarly for the ones in ${\Je}_{\tilde{r}}$. 

We observe that~\eqref{set of equations} can be represented as  the linear system
\begin{equation} \label{linear system}
(\boldsymbol{A}+\boldsymbol{R}) \boldsymbol{u}=\boldsymbol{f},
\end{equation}
where $\boldsymbol{A}, \boldsymbol{R} \in \C^{\Nd \times \Nd}$, $\boldsymbol{u} \in \C^{\Nd}$, and $\boldsymbol{f} \in \C^{\Nd}$, $\Nd$ being the total number of global degrees of freedom, are matrices and vectors with entries defined by
\begin{equation*}
\begin{alignedat}{3}
\boldsymbol{A}_{(\tilde{r},\tilde{j}),(\tilde{s},\tilde{\ell})} &=\an(\varphi_{\tilde{s},\tilde{\ell}},\varphi_{\tilde{r},\tilde{j}}), \quad \quad &\boldsymbol{R}_{(\tilde{r},\tilde{j}),(\tilde{s},\tilde{\ell})}&=\im\k\theta \int_{\GammaR} (\PiGammaR \varphi_{\tilde{s},\tilde{\ell}}) \overline{(\PiGammaR \varphi_{\tilde{r},\tilde{j}})} \, \ds, \\
\boldsymbol{u}_{(\tilde{s},\tilde{\ell})}&=u_{\tilde{s},\tilde{\ell}},
&\boldsymbol{f}_{(\tilde{r},\tilde{j})}&=\int_{\GammaN} \gN \overline{(\PiGammaN \varphi_{\tilde{r},\tilde{j}})}  + \int_{\GammaR} \gR \overline{(\PiGammaR \varphi_{\tilde{r},\tilde{j}})} \, \ds.
\end{alignedat}
\end{equation*}
Note that here the subindex $(\tilde{r},\tilde{j})$ is associated with the index $\sum_{\tilde{t}=1}^{\tilde{r}-1} p_{e_{\tilde{t}}}+\tilde{j}$. The computation of $\boldsymbol{A}$, $\boldsymbol{R}$, and $\boldsymbol{f}$ are described in the forthcoming Sections \ref{subsection A}, \ref{subsection Robin}, and \ref{subsection g}, respectively.
\subsection{Computation of the matrix $\boldsymbol{A}$} \label{subsection A}
Using the definition of $\an(\cdot,\cdot)$ in \eqref{definition ah}, together with \eqref{local discrete bf}, we have 
\begin{equation} \label{matrix AK}
\boldsymbol {A}_{(\tilde r, \tilde j), (\tilde s, \tilde \ell)}
= \an(\varphi_{\tilde{s},\tilde{\ell}},\varphi_{\tilde{r},\tilde{j}})=\sum_{\E \in \taun} \left[\aE( \Pip \varphi_{\tilde{s},\tilde{\ell}}, \Pip \varphi_{\tilde{r},\tilde{j}}) + \SE\left( (I-\Pip) \varphi_{\tilde{s},\tilde{\ell}}, (I-\Pip) \varphi_{\tilde{r},\tilde{j}} \right)\right].
\end{equation}
The global matrix $\boldsymbol{A}$ is then assembled by means of the local matrices $\boldsymbol{A}^\E\in \C^{\pE \times \pE}$ that are defined as
\[
\boldsymbol{A}^\E_{(r, j), (s,  \ell)} = \aE( \Pip \varphi_{s,\ell}, \Pip \varphi_{r, j}) + \SE\left( (I-\Pip) \varphi_{ s , \ell}, (I-\Pip) \varphi_{ r , j } \right),
\]
where $\{\varphi_{s,\ell}\}_{s \in \ME,\, \ell \in \Jes}$ denotes the local basis of $\VE$.

Following~\cite{hitchhikersguideVEM}, the computation of such local matrices is performed in various steps.

\paragraph*{Computation of the bulk projector $\Pip$ in~\eqref{projector aK}.}
Let $\varphi_{s,\ell} \in \VE$, $s \in \ME$, $\ell \in \Jes$, be the canonical basis function.
As a first step, we write $\Pip \varphi_{s,\ell} \in \PWE$ as a linear combination of the plane waves $\wzE \in \PWE$, $\zeta=1,\dots,p$,
\begin{equation*}
\Pip \varphi_{s,\ell}=\sum_{\zeta=1}^{p} \gamma_\zeta^{\E (s,\ell)} \wzE.
\end{equation*}
Plugging this ansatz into \eqref{projector aK} and testing with plane waves lead to the system of linear equations
\begin{equation*}
\boldsymbol{G}^\E \boldsymbol{\gamma}^{\E(s,\ell)} = \boldsymbol{b}^{\E(s,\ell)},
\end{equation*}
where $\boldsymbol{G}^\E \in \C^{p \times p}$, $\boldsymbol{\gamma}^{\E(s,\ell)} \in \C^p$, $\boldsymbol{b}^{\E(s,\ell)} \in \C^p$, for all $s \in \ME$ and $\ell \in \Jes$, are defined as
\begin{align*} 
\boldsymbol{G}^\E := \begin{bmatrix}
\aE(w_1^\E,w_1^\E) & \cdots & \aE(w_p^\E,w_1^\E) \\
\vdots & \ddots & \vdots \\
\aE(w_1^\E,w_p^\E) & \cdots & \aE(w_p^\E,w_p^\E)
\end{bmatrix}, \quad 
\boldsymbol{\gamma}^{\E(s,\ell)} := \begin{bmatrix}
\gamma_1^{\E(s,\ell)} \\
\vdots \\
\gamma_{p}^{\E(s,\ell)}
\end{bmatrix}, \quad
\boldsymbol{b}^{\E(s,\ell)} := 
\begin{bmatrix}
\aE(\varphi_{s,\ell},w_1^\E) \\
\vdots \\
\aE(\varphi_{s,\ell},w_p^\E) 
\end{bmatrix}.
\end{align*}
Collecting columnwise the $\boldsymbol{b}^{\E(s,\ell)}$ leads to a matrix $\boldsymbol{B}^\E:= \left[ \boldsymbol{b}^{\E(1,1)},\dots,\boldsymbol{b}^{\E(\Ne,{\pe}_{\Ne})} \right] \in \C^{p \times \pE}$.

The matrix $\boldsymbol{\Pi}^\E_\star$ representing the action of $\Pip$ from $\VE$ into $\PWE$ is then given by
\begin{equation} \label{matrix representation Pi_star}
\boldsymbol{\Pi}^\E_\star = (\boldsymbol{G}^\E)^{-1} \boldsymbol{B}^\E \in \C^{p \times \pE}.
\end{equation}
We introduce next the matrix
\begin{equation*}
\boldsymbol{D}^\E:=
\begin{bmatrix}
\dof_{1,1}(w_1^\E) & \cdots & \dof_{1,1}(w_p^\E) \\
\vdots & \ddots & \vdots \\
\dof_{\Ne,{\pe}_{\Ne}}(w_1^\E) & \cdots & \dof_{\Ne,{\pe}_{\Ne}}(w_p^\E)
\end{bmatrix}
\in \C^{\pE \times p}.
\end{equation*}
Then, as in \cite{hitchhikersguideVEM}, the matrix $\boldsymbol{\Pi}^\E$ representing the composition of the embedding of $\PWE$ into $\VE$ after $\Pip$ can be expressed as
\begin{equation} \label{definition Pi 2}
\boldsymbol{\Pi}^\E = \boldsymbol{D}^\E (\boldsymbol{G}^\E)^{-1} \boldsymbol{B}^\E \in \C^{\pE \times \pE}.
\end{equation}

\paragraph*{Matrix representation of $\boldsymbol{A}^\E \in \C^{\pE \times \pE}$.}
The local VE stiffness matrix $\boldsymbol{A}^\E$ is given by
\begin{equation} \label{matrix A}
\boldsymbol{A}^\E = \overline{(\boldsymbol{\Pi}^\E_\star)}^T \boldsymbol{G}^\E \boldsymbol{\Pi}^\E_\star + \overline{(\boldsymbol{I}^\E-\boldsymbol{\Pi}^\E)}^T \boldsymbol{S}^\E (\boldsymbol{I}^\E-\boldsymbol{\Pi}^\E),
\end{equation}
where $\boldsymbol{I}^\E$ denotes the identity matrix of size $\pE \times \pE$, and $\boldsymbol{S}^\E$ is the matrix representation of the local stabilization forms $\SE(\cdot,\cdot)$; for a specific choice of the stabilization, we refer to Section \ref{section numerical results} below. Further, note that by using \eqref{matrix representation Pi_star}, it holds 
\begin{equation*}
\overline{(\boldsymbol{\Pi}^\E_\star)}^T \boldsymbol{G}^\E \boldsymbol{\Pi}^\E_\star = \overline{(\boldsymbol{B}^\E)}^T \overline{(\boldsymbol{G}^\E)}^{-T} \boldsymbol{B}^\E.
\end{equation*}

\subsubsection{Computation of the local matrices $\boldsymbol{G}^\E$, $\boldsymbol{B}^\E$, and $\boldsymbol{D}^\E$}
The matrices $\boldsymbol{G}^\E$, $\boldsymbol{B}^\E$, and $\boldsymbol{D}^\E$ can actually be computed exactly without numerical integration, but rather by using the definition of the degrees of freedom in \eqref{dofs} and the formula 
\begin{equation} \label{clever formula}
\Phi(z):=\int_{0}^{1} e^{zt} \textup{d}t
=\begin{cases}
\frac{e^z-1}{z} & \text{if } z \neq 0 \\
1 & \text{if } z=0
\end{cases}\quad \forall  z \in \C.
\end{equation}
This has been already investigated in~\cite{Helmholtz-VEM,gittelson}.

\paragraph*{Computation of $\boldsymbol{G}^\E \in \C^{p \times p}$.} 
Given $j,\ell \in \J$, we compute, by using an integration by parts and taking into account the definition of the bulk plane waves $\wjE$ and $\wlE$, respectively,
\begin{equation*}
\begin{split}
\boldsymbol{G}^\E_{j,\ell}&=\aE(\wlE,\wjE)=\sum_{r=1}^{\Ne} \int_{e_r} (\nabla \wlE \cdot {\nE}_{|_{e_r}}) \overline{\wjE} \, \ds \\
&=\im\k \sum_{r=1}^{\Ne} e^{\im\k(\djj-\dl) \cdot \xE} (\dl \cdot {\nE}_{|_{e_r}}) \int_{e_r} e^{\im\k(\dl-\djj) \cdot \x} \, \ds.
\end{split}
\end{equation*}
The integral over the edges $e_r$, $r \in \ME$, on the right-hand side can be computed by application of the transformation rule. In fact, denoting by $\boldsymbol{a}_r$ and $\boldsymbol{b}_r$ the endpoints of the edge $e_r$, we obtain
\begin{equation} \label{formula pw edge integral}
\begin{split}
\int_{e_r} e^{\im\k(\dl-\djj) \cdot \x} \, \ds
&=h_{e_r} e^{\im\k(\dl-\djj) \cdot \boldsymbol{a}_r} \int_0^1 e^{\im\k(\dl-\djj) \cdot (\boldsymbol{b}_r-\boldsymbol{a}_r)t} \, \textup{d}t \\
&=h_{e_r} e^{\im\k(\dl-\djj) \cdot \boldsymbol{a}_r} \Phi\left(\im\k(\dl-\djj) \cdot (\boldsymbol{b}_r-\boldsymbol{a}_r)\right),
\end{split}
\end{equation}
where $\Phi$ is defined in \eqref{clever formula}.

\paragraph*{Computation of $\boldsymbol{B}^\E  \in \C^{p \times \pE}$.}
Given $s \in \ME$, $\ell \in \Jes$, $j \in \J$, an integration by parts, the definitions of the local canonical basis functions in~\eqref{definition canonical basis}, and the definition of the degrees of freedom in~\eqref{dofs} yield
\begin{equation*} 
\begin{split}
\boldsymbol{B}^\E_{j,(s,\ell)} = \aE(\varphi_{s,\ell},\wjE) 
&= \sum_{r=1}^{\Ne} \int_{e_r} \varphi_{s,\ell} \, \overline{(\nabla \wjE \cdot {\nE}_{|_{e_r}})} \, \ds
= -\im\k \sum_{r=1}^{\Ne} (\djj \cdot {\nE}_{|_{e_r}})  \int_{e_r} \varphi_{s,\ell} \, \overline{\wjE} \, \ds \\
&= -\im\k (\djj \cdot {\nE}_{|_{e_s}}) e^{-\im\k \djj \cdot (\x_{e_s}-\xE)} \int_{e_s} \varphi_{s,\ell} \, \overline{\underbrace{e^{\im\k \djj \cdot (\x-\x_{e_s})}}_{=w_t^{e_s}}} \, \ds \\
&= -\im\k (\djj \cdot {\nE}_{|_{e_s}}) e^{-\im\k \djj \cdot (\x_{e_s}-\xE)} h_{e_s} \delta_{t,\ell}.
\end{split}
\end{equation*}
where $t \in \Jes$ is the local index such that $w_t^{e_s}=e^{\im\k \djj \cdot (\x-\x_{e_s})}$ on $e_s$.

\paragraph*{Computation of $\boldsymbol{D}^\E \in \C^{\pE \times p}$.} 
Given $r \in \ME$, $j \in \mathcal{J}_{e_r}$, $\ell \in \J$, a direct computation gives
\begin{equation*}
\dof_{r,j}(\wlE)=\frac{1}{h_{e_r}} \int_{e_r} \wlE \overline{w_j^{e_r}} \, \ds 
=\frac{1}{h_{e_r}} e^{\im\k (\djj \cdot \x_{e_r} - \dl \cdot \xE)} \int_{e_r} e^{\im\k (\dl-\djj) \cdot \x} \, \ds.
\end{equation*}
The last term on the right-hand side can be computed as in \eqref{formula pw edge integral}.

\subsection{Computation of the Robin boundary matrix $\boldsymbol{R}$} \label{subsection Robin}
Recall that the Robin boundary matrix $\boldsymbol{R}$ is given by 
\begin{equation} \label{matrix Re}
\boldsymbol{R}_{(\tilde{r},\tilde{j}),(\tilde{s},\tilde{\ell})}=\im\k\theta \sum_{\e \in \EnR} \int_{\e} (\Pie \varphi_{\tilde{s},\tilde{\ell}}) \overline{(\Pie \varphi_{\tilde{r},\tilde{j}})} \, \ds.
\end{equation}
Similarly as above, the global matrix $\boldsymbol{R}$ is assembled by means of the local matrices $\boldsymbol{R}^\e \in \C^{\pe \times \pe}$ that are defined as
\[
\boldsymbol{R}^\e_{(r, j), (s,  \ell)} = \im\k\theta  \int_{\e} (\Pie \varphi_{s,\ell}) \overline{(\Pie \varphi_{r, j})} \, \ds,
\]
where $\{\varphi_{s,\ell}\}_{s \in \ME,\, \ell \in \Jes}$ denotes the local basis of $\VE$, with $\E$ such that $\e \subset \partial \E\cap  \GammaR$.

Let $e \in \EnR$ be a fixed boundary edge in $\EnR$ with local index $z \in \ME$, where $\E \in \taun$ is the unique polygon with $\e = \partial \E \cap \GammaR$.

\paragraph*{Computation of the edge projector $\Pie$ in \eqref{projector edge L2}.}
Let $\varphi_{z,\ell} \in \VE$, $\ell \in \Je$, be a fixed function of the local canonical basis. We first expand $\Pie \varphi_{z,\ell} \in \PWc(\e)$ in terms of $\wetae \in \PWc(\e)$, $\eta=1,\dots,\pe$, 
\begin{equation*}
\Pie \varphi_{z,\ell} = \sum_{\eta=1}^{\pe} \beta_\eta^{\e(\ell)} \wetae.
\end{equation*}
Inserting this ansatz into \eqref{projector edge L2} and testing with edge plane waves lead to the linear system
\begin{equation*}
\boldsymbol{G}_0^e \boldsymbol{\beta}^{\e(\ell)} = \boldsymbol{b}_0^{e(\ell)}.
\end{equation*}
Here, $\boldsymbol{G}_0^e \in \C^{\pe \times \pe}$, $\boldsymbol{\beta}^{\e(\ell)} \in \C^{\pe}$, $\boldsymbol{b}_0^{\e(\ell)} \in \C^{\pe}$ for all $\ell\in \Je$, are defined as
\begin{align} \label{VEM edge matrices}
\boldsymbol{G_0^e} := \begin{bmatrix}
(w_1^e,w_1^e)_{0,e} & \cdots & (w_{\pe}^e,w_1^e)_{0,e} \\
\vdots & \ddots & \vdots \\
(w_1^e,w_{\pe}^e)_{0,e} & \cdots & (w_{\pe}^e,w_{\pe}^e)_{0,e}
\end{bmatrix}, \quad 
\boldsymbol{\beta}^{\e(\ell)} := \begin{bmatrix}
\beta_1^{\e(\ell)} \\
\vdots \\
\beta_{\pe}^{\e(\ell)}
\end{bmatrix}, \quad
\boldsymbol{b}_0^{\e(\ell)} := 
\begin{bmatrix}
(\varphi_{z,\ell},w_1^e)_{0,e} \\
\vdots \\
(\varphi_{z,\ell},w_{\pe}^e)_{0,e} 
\end{bmatrix},
\end{align}
where $(\cdot,\cdot)_e$ denotes the complex $L^2$ inner product over $\e$. Note that in fact $\boldsymbol{G_0^e} \in \R^{\pe \times \pe}$, see \eqref{G0e} below.
Moreover, such matrix is positive definite for all $\E \in \taun$, and thus also invertible. Nevertheless, it is worth to underline  that in presence of small-sized elements and of a large number of plane waves, such matrix may become singular in machine precision.
This problem will be analyzed in Section~\ref{section numerical results standard} and addressed in Section~\ref{section cure illconditioning}.

Consequently, collecting the $\boldsymbol{b}_0^{e(\ell)}$ columnwise into a matrix $\boldsymbol{B}_0^e \in \C^{\pe \times \pE}$, the matrix representation of $\Pie$ is given by
\begin{equation*}
\boldsymbol{\Pi}_\star^{0,e}=(\boldsymbol{G}_0^e)^{-1} \boldsymbol{B}_0^e.
\end{equation*}
\paragraph*{Matrix representation of $\boldsymbol{R}^e$.}
The local edge VE boundary mass matrix $\boldsymbol{R}^e$ has the form 
\begin{equation} \label{computation R}
\boldsymbol{R}^e=\overline{\boldsymbol{\Pi}_\star^{0,e}}^T \boldsymbol{G}_0^e \boldsymbol{\Pi}_\star^{0,e}
=\overline{\boldsymbol{B}_0^e}^T (\overline{\boldsymbol{G}_0^e})^{-T} \boldsymbol{B}_0^e.
\end{equation}

\subsubsection{Computation of the local matrices $\boldsymbol{G}_0^e$ and $\boldsymbol{B}_0^e$}
The matrices $\boldsymbol{G}_0^e$ and $\boldsymbol{B}_0^e$ can be computed exactly using the formula~\eqref{clever formula}.

\paragraph*{Computation of $\boldsymbol{G}_0^e \in \R^{\pe \times \pe}$.} \label{subsection G0}
Given $j,\ell \in \Je$ and denoting by $\boldsymbol{a}$ and $\boldsymbol{b}$ the endpoints of the edge $e$, it holds $(\boldsymbol{G}_0^e)_{j,j}=\he$ and, if $j \neq \ell$,
\begin{equation} \label{G0e}
(\boldsymbol{G}_0^e)_{j,\ell}=(\wle,\wje)_{0,e}
= e^{\im\k (\djj-\dl) \cdot \xe} \int_{e} e^{\im\k (\dl-\djj) \cdot \x} \, \ds
=2\he \frac{\sin\left(\k(\dl-\djj) \cdot \frac{\boldsymbol{b}-\boldsymbol{a}}{2} \right)}{\k (\dl-\djj) \cdot (\boldsymbol{b}-\boldsymbol{a})} \in \R,
\end{equation}
where we used \eqref{formula pw edge integral} and the property $\sin(z)=\frac{1}{2 \im}(e^{\im z}-e^{-\im z})$, $z \in \C$, in the last equality.

\paragraph*{Computation of $\boldsymbol{B}_0^e \in \C^{\pe \times \pE}$.} \label{subsection B0}
For all $j, \ell \in \Je$, the definition of the degrees of freedom in \eqref{dofs} implies
\begin{equation*}
(\boldsymbol{B}_0^e)_{j,\ell}=(\varphi_{z,\ell},w_j^{e})_{0,e}
= \int_{e} \varphi_{z,\ell} \, \overline{w_j^{e}} \, \ds 
= h_{e} \delta_{j,\ell}.
\end{equation*}

\subsection{Computation of the right-hand side vector $\boldsymbol{f}$} \label{subsection g}
Recall that $\boldsymbol{f}$ is given by
\begin{equation*}
\boldsymbol{f}_{(\tilde{r},\tilde{j})}=
\sum_{e \in \EnN} \int_{e} \gN \overline{(\Pie \varphi_{\tilde{r},\tilde{j}})}\,\ds  + \sum_{e \in \EnR} \int_{e} \gR \overline{(\Pie \varphi_{\tilde{r},\tilde{j}})} \, \ds
:= \boldsymbol{f}^N_{(\tilde{r},\tilde{j})} + \boldsymbol{f}^R_{(\tilde{r},\tilde{j})}.
\end{equation*}
Once again, the global right-hand side $\boldsymbol{f}$ is assembled by means of the local vectors $\boldsymbol{f}^{N,e} \in \C^{\pe}$ and $\boldsymbol{f}^{R,e} \in \C^{\pe}$ that are defined as
\[
\boldsymbol{f}^{N,\e}_{(r,j)} = \int_{e} \gN \overline{(\Pie \varphi_{r, j})} \, \ds,\quad \boldsymbol{f}^{R,\e}_{(r,j)} = \int_{e} \gR \overline{(\Pie \varphi_{r,  j})} \, \ds,
\]
where $\{\varphi_{s,\ell}\}_{s \in \ME,\, \ell \in \Jes}$ denotes the local basis of $\VE$, with $\E$ such that either $\e \subset \partial \E\cap  \GammaN$ or $\e \subset \partial \E\cap  \GammaR$.

We only show the details concerning the computation of $\boldsymbol{f}^{N,e}$. The assembly of $\boldsymbol{f}^{R,e}$ is analogous.

Let therefore $e \in \EnN$ be a fixed Neumann boundary edge with local index $z \in \ME$, where $\E \in \taun$ is the unique polygon with $\e = \partial \E \cap \GammaN$. Then, for every $\ell \in \Je$, denoting by $\abold_z$ and $\bbold_z$ the endpoints of edge $e$, we have
\begin{equation} \label{computation F}
\begin{split}
\boldsymbol{f}^{N,e}_{j} &= \int_{e} \gN \overline{(\Pie\varphi_{z,j})} \, \ds
=\sum_{\eta=1}^{\pe} \overline{\beta_\eta^{e(j)}} \int_e \gN \overline{\wetae} \, \ds \\
&=\sum_{\eta=1}^{\pe} \overline{\beta_\eta^{e(j)}} \he \int_0^1 \gN(\abold_z+t(\bbold_z-\abold_z)) e^{-\im\k \djj \cdot (\abold_z+t(\bbold-\abold_z)-\xe)} \, \textup{d}t.
\end{split}
\end{equation}
The last integral can be approximated employing a Gau\ss-Lobatto quadrature formula. We remark that the computation of the right-hand side $\boldsymbol{f}$ is the only one where numerical quadrature may be required.

\subsection{General case ($\GammaD \neq \emptyset$)} \label{subsection gammaD}
The general case with $\GammaD \neq \emptyset$ can be dealt with in a similar fashion. First of all, we implement the global matrices $\boldsymbol{A}, \boldsymbol{R}$ and the right-hand side vector $\boldsymbol{f}$ as above.
Then, in order to incorporate the Dirichlet boundary conditions, we additionally impose that the numerical solution $\un$ satisfies
\begin{equation*}
\int_{e_\zeta} (\un - \gD)  \overline{\we_j} \, \ds = 0  \, \quad \forall j=1,\dots,{\pe}_\zeta, \, \forall e_\zeta \in \EnD,
\end{equation*}
which, using the expansion of $\un$ in terms of the canonical basis functions, leads to
\begin{equation*}
\sum_{\tilde{s}=1}^{\Nedg} \sum_{\tilde{\ell}=1}^{{\pe}_{\tilde{s}}}   u_{\tilde{s},\tilde{\ell}} \int_{e_\zeta} \varphi_{\tilde{s},\tilde{\ell}} \overline{w^{e_\zeta}_j} \, \ds = \int_{e_\zeta} \gD \overline{w^{e_\zeta}_j} \, \ds  \, \quad \forall j=1,\dots,{\pe}_\zeta, \, \forall e_\zeta \in \EnD.
\end{equation*}
Employing the definition of the canonical basis functions in \eqref{definition canonical basis} and the degrees of freedom in~\eqref{dofs} results in 
\begin{equation} \label{general case bc}
u_{\zeta,j} = \frac{1}{h_{e_\zeta}} \int_{e_\zeta} \gD \overline{w^{e_\zeta}_j} \, \ds  \, \quad \forall j=1,\dots,p_{e_\zeta}, \, \forall e_\zeta \in \EnD.
\end{equation}
This information is inserted in the linear system \eqref{linear system} by setting to zero all the entries in the rows of $\boldsymbol{A}$ corresponding to test functions associated with Dirichlet boundary edges, apart from the diagonal entry, which is set to one,
and replacing the corresponding values of the vector $\boldsymbol{f}$ with the right-hand sides of \eqref{general case bc}.


\section{The curse of ill-conditioning} \label{section numerical results standard}
In this section, we investigate the numerical performance of the method \eqref{complete method}.
We anticipate that the present construction of the method does not deliver accurate results due to the strong ill-conditioning related to the plane wave bases. Therefore, we will propose a numerical recipe apt to remove such instabilities, see Section \ref{subsection orthogonalized version} below.
All the tests were performed with \texttt{Matlab} \texttt{R2016b}.

We consider here boundary value problems of the form~\eqref{weak continuous problem} with $\theta=1$, $\GammaR=\partial \Omega$ on the square domain~$\Omega:=(0,1)^2$ with analytical solutions
\begin{equation} \label{exact solutions u0 u1}
\begin{split}
u_0(x,y)&:=\exp\left(\im\k x \right),\\
u_1(x,y)&:=\exp\left(\im\k\left(\cos\left(\frac{\pi}{4}\right)x+\sin\left(\frac{\pi}{4}\right)y\right)\right).
\end{split}
\end{equation}
The functions $u_0$ and $u_1$ are plane waves travelling in the directions $(1,0)$ and $(\frac{\pi}{4},\frac{\pi}{4})$, respectively, see also Figure~\ref{fig:real parts exact solutions} for contour plots of the real parts of $u_0$ and $u_1$ for $k=20$.
\begin{figure}[h]
\centering
\begin{subfigure}[b]{0.475\textwidth}
\centering
\includegraphics[width=0.9\textwidth]{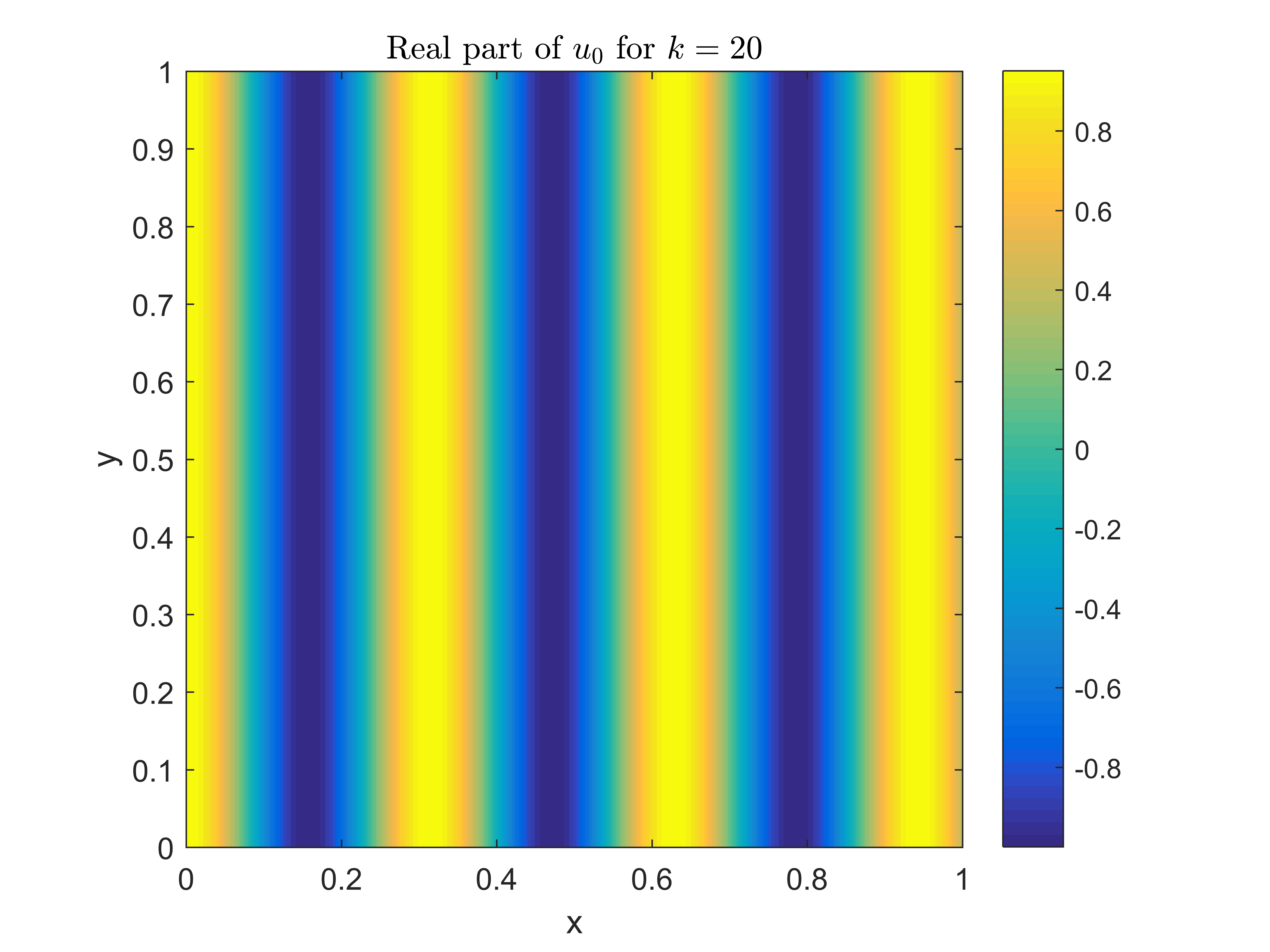}
\end{subfigure}
\hfill
\begin{subfigure}[b]{0.475\textwidth}  
\centering 
\includegraphics[width=0.9\textwidth]{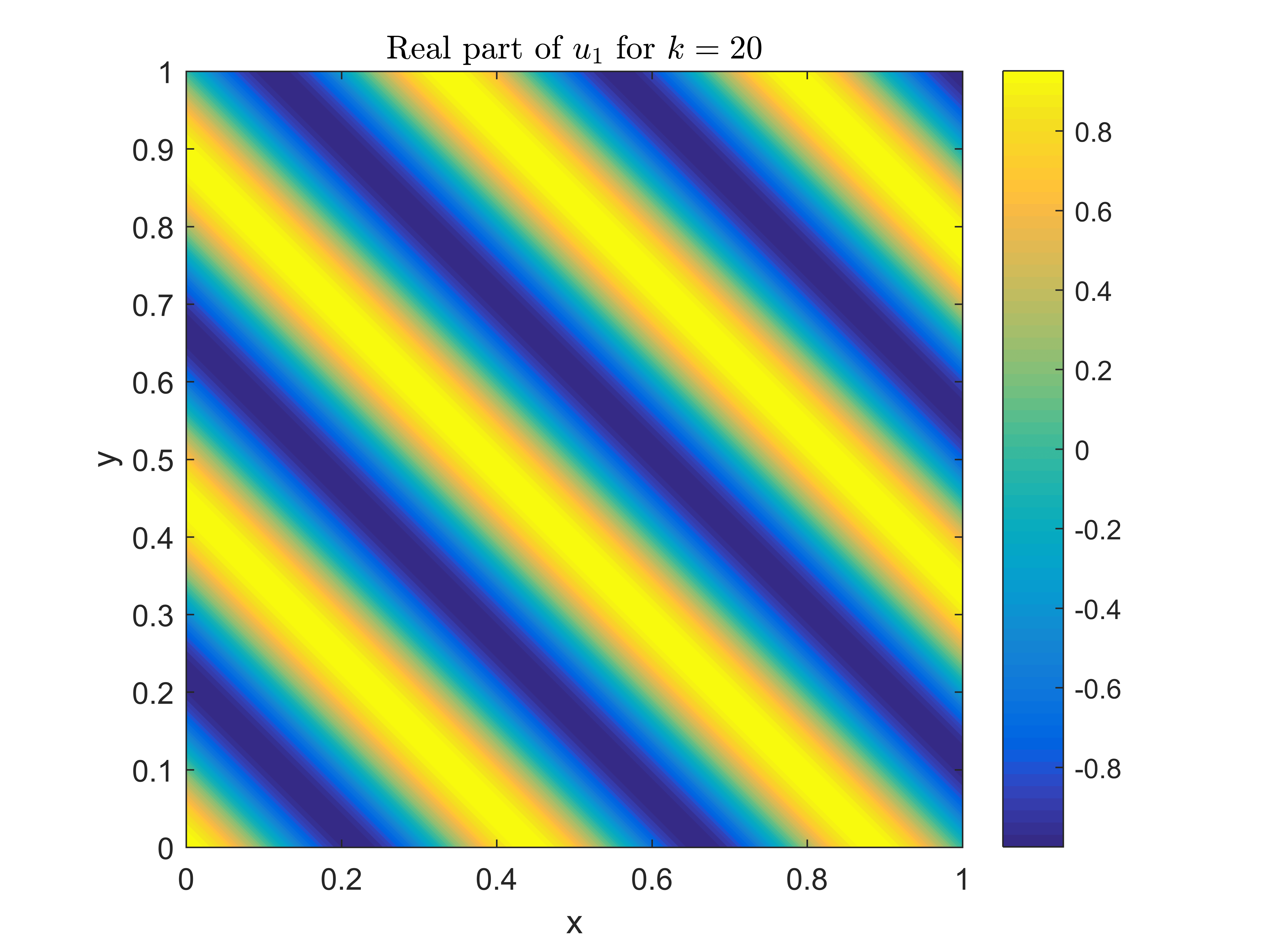}
\end{subfigure}
\caption{Real parts of the functions $u_0$ (\textit{left}) and $u_1$ (\textit{right}) defined in \eqref{exact solutions u0 u1} for $k=20$.} 
\label{fig:real parts exact solutions}
\end{figure}

Since an exact representation of the numerical solution $\un$ is not available in closed form inside each element, it is not possible to compute the exact $H^1$ and $L^2$ discretization errors directly. Instead, we compute the approximate relative errors
\begin{equation} \label{rel_errors}
\frac{\lVert u-\Pi_p \un \rVert_{1,k,\taun}}{\lVert u \rVert_{1,k,\Omega}}, \quad
\frac{\lVert u-\Pi_p \un \rVert_{0,\taun}}{\lVert u \rVert_{0,\Omega}}, 
\end{equation} 
where ${\Pi_p}_{|\E}=\Pip$, $\E \in \taun$, is the local projector defined in \eqref{projector aK}.
Mimicking what done in~\cite{ncHVEM}, it is possible to show that these relative errors converge with the same rate as the exact relative $H^1$ and $L^2$ discretization errors.

Furthermore, we employ two different local stabilizations, which in matrix form read as follows:
\begin{itemize}
\item the identity stabilization
\begin{equation} \label{identity stabilization}
\begin{split}
\boldsymbol{S}^\E=\boldsymbol{I}^\E,
\end{split}
\end{equation}
where $\boldsymbol{I}^\E \in \C^{\pE \times \pE}$ denotes the identity matrix;
\item the \emph{modified} $D$-recipe stabilization
\begin{equation}\label{modified D-recipe}
\boldsymbol{S}^\E_{(s,\ell),(r,j)}=\max\{\Re(\aE( \Pip \varphi_{r,j}, \Pip \varphi_{s,\ell})),1 \} \delta_{r,s} \delta_{\ell,j},
\end{equation}
where $\delta$ denotes the Kronecker delta.
\end{itemize}
The former choice is the original VEM stabilization proposed in \cite{VEMvolley,hitchhikersguideVEM}, whereas the latter is a modification of the {\it diagonal recipe} (D-recipe), which was introduced in \cite{VEM3Dbasic},
and whose performance was investigated for high-order VEM and in presence of badly-shaped elements in~\cite{fetishVEM, fetishVEM3D}.

In order to build a basis of $\PWE$, see \eqref{plane wave bulk space}, we employ a set $\{\dir^{(0)}_\ell\}_{\ell=1}^p$ of $p=2q+1$, $q \in \N$, equidistributed plane wave directions given by
\begin{equation} \label{pw directions}
\dir^{(0)}_\ell=
\begin{pmatrix}
\cos\left(\frac{2\pi}{p}(\ell-1)\right),\, \sin\left(\frac{2\pi}{p}(\ell-1)\right)
\end{pmatrix}.
\end{equation}

We discretize the boundary value problem on sequences of quasi-uniform Cartesian meshes and Voronoi-Lloyd meshes~\cite{paulinotestnumericipolygonalmeshes}, see Figure \ref{fig:meshes}, and investigate the $\h$-version of the method for a fixed wave number $\k=10$ and different values of $\q=2$, $3$, and $4$. Note that in the case of $u_0$, since $u_0 \in \lin\{\wlE\}_{\ell=1}^p$ and owing to the consistency property \eqref{consistency} of the discrete bilinear form, the method should reproduce, up to machine precision, the exact solution. 
The approximate relative $L^2$ bulk errors defined in~\eqref{rel_errors} are plotted in Figures \ref{fig:TEST1} and \ref{fig:TEST2}.
\begin{figure}[h]
\begin{center}
\begin{minipage}{0.46\textwidth} 
\includegraphics[width=\textwidth]{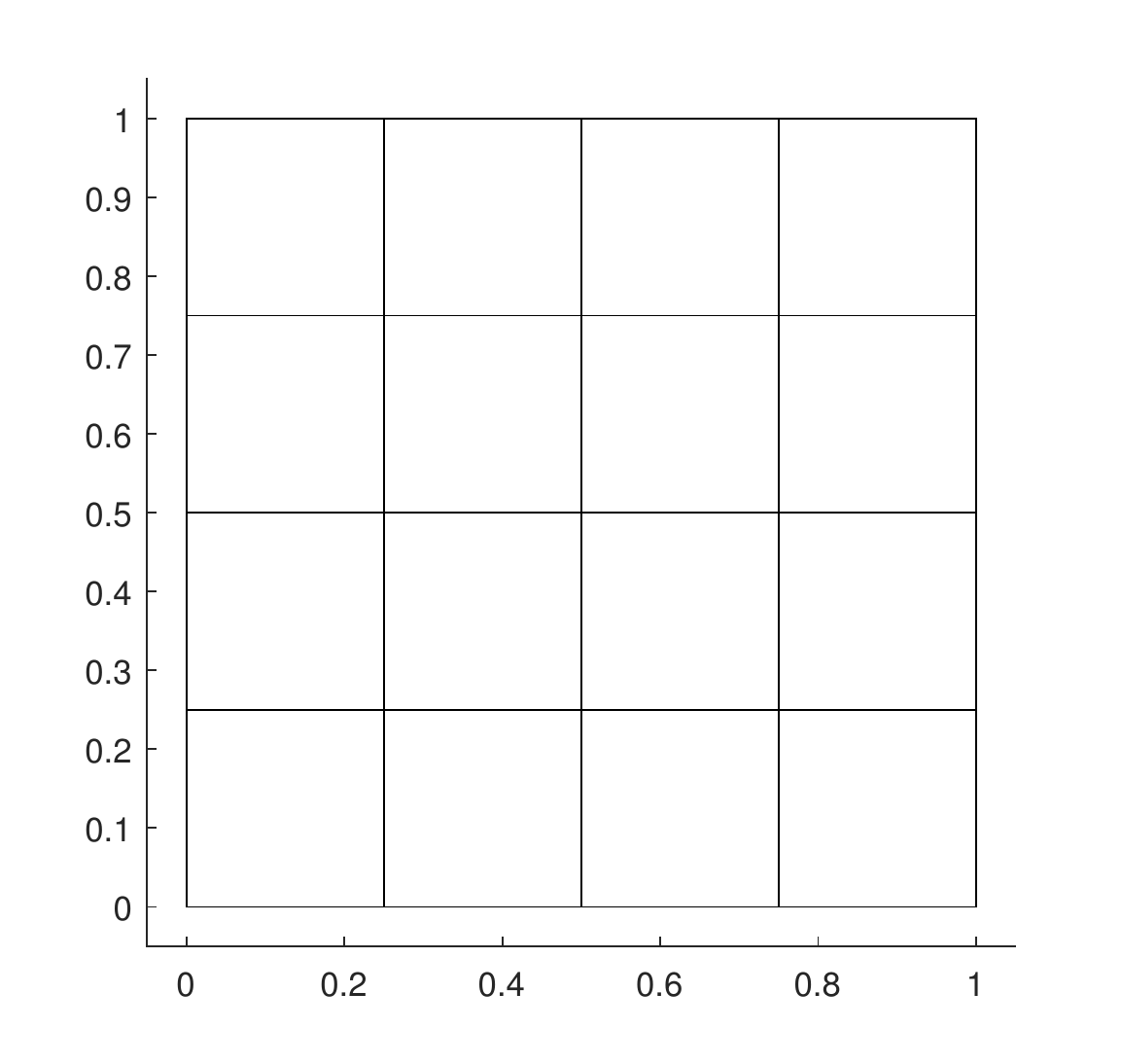}
\end{minipage}
\hspace{0.5cm}
\begin{minipage}{0.4\textwidth}
\includegraphics[width=\textwidth]{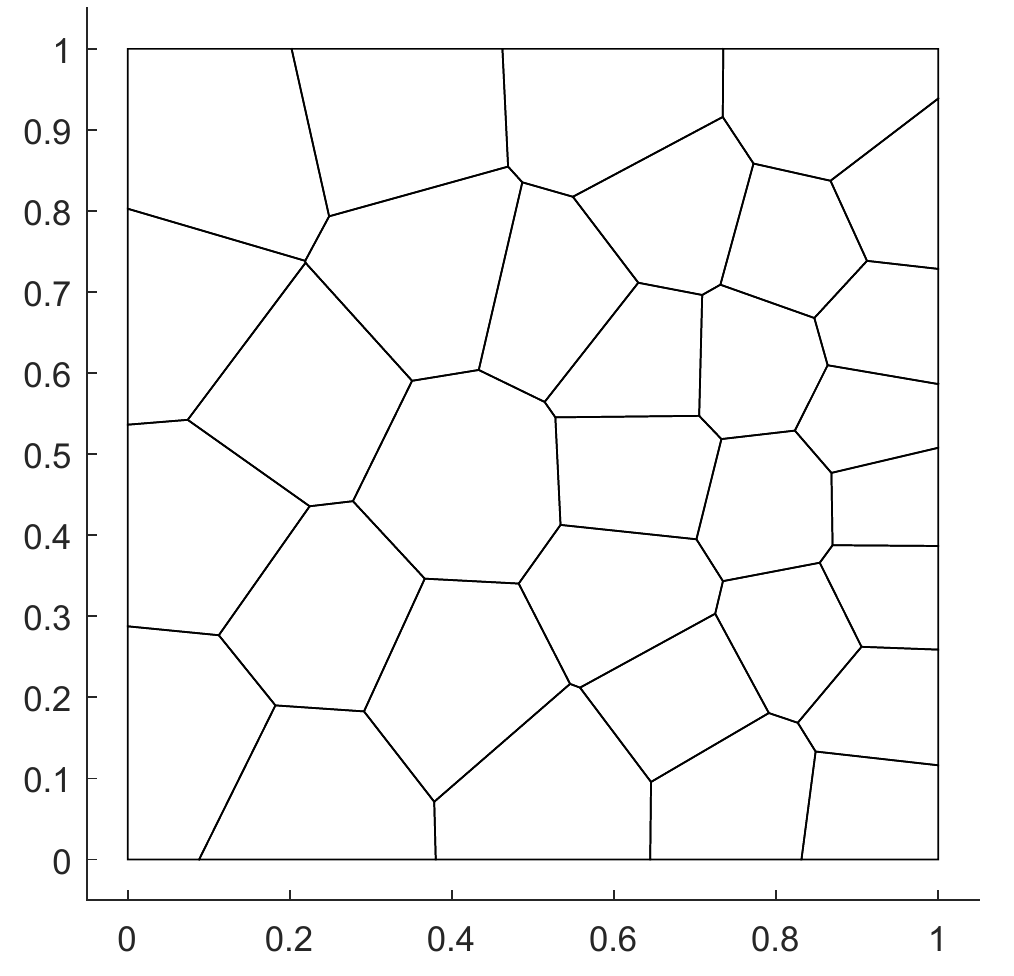}
\end{minipage}
\end{center}
\caption{\textit{Left}: Cartesian mesh. \textit{Right}: Voronoi-Lloyd mesh.}
\label{fig:meshes} 
\end{figure}

\begin{figure}[h]
\begin{center}
\begin{minipage}{0.45\textwidth} 
\includegraphics[width=\textwidth]{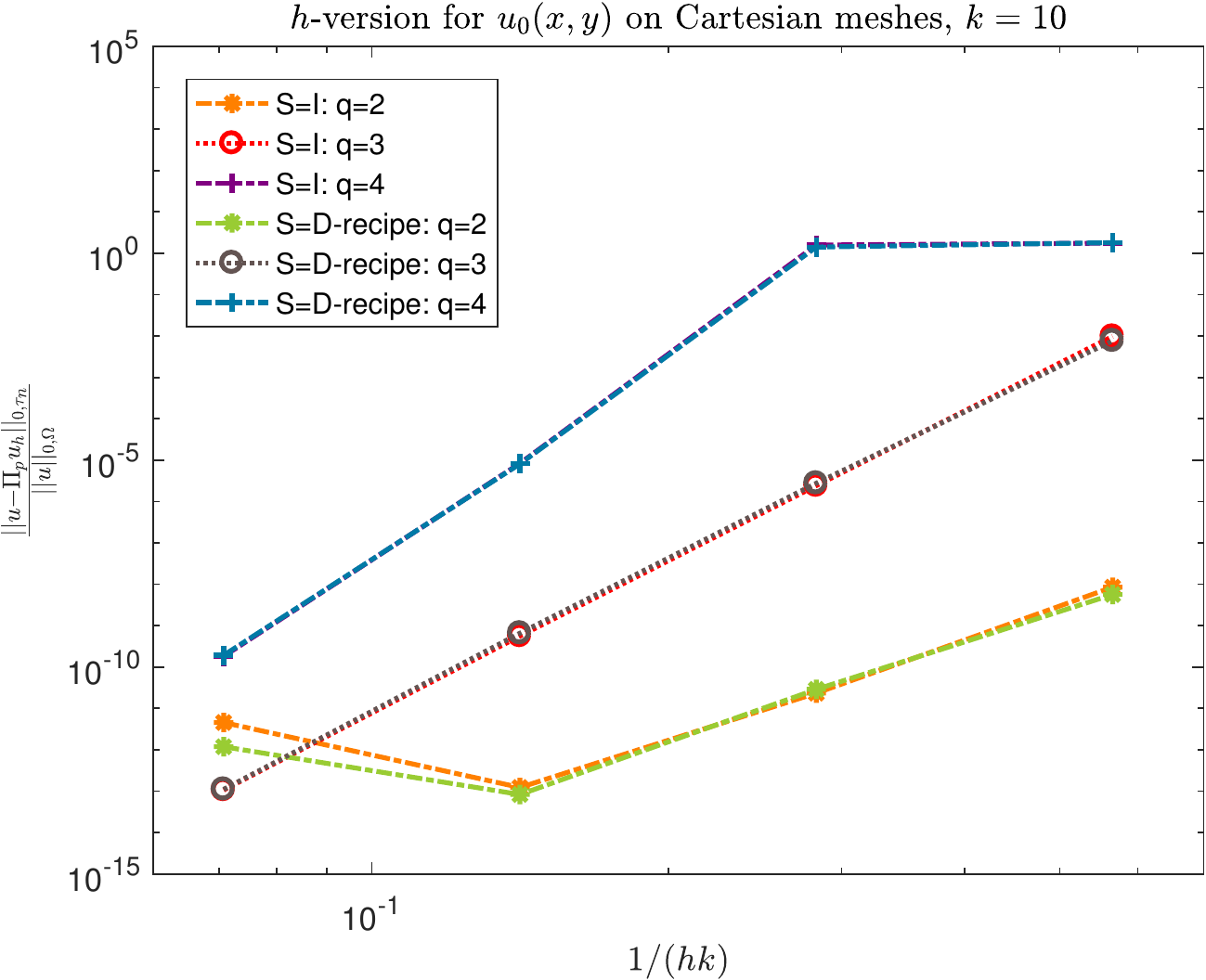}
\end{minipage}
\hspace{0.5cm}
\begin{minipage}{0.45\textwidth}
\includegraphics[width=\textwidth]{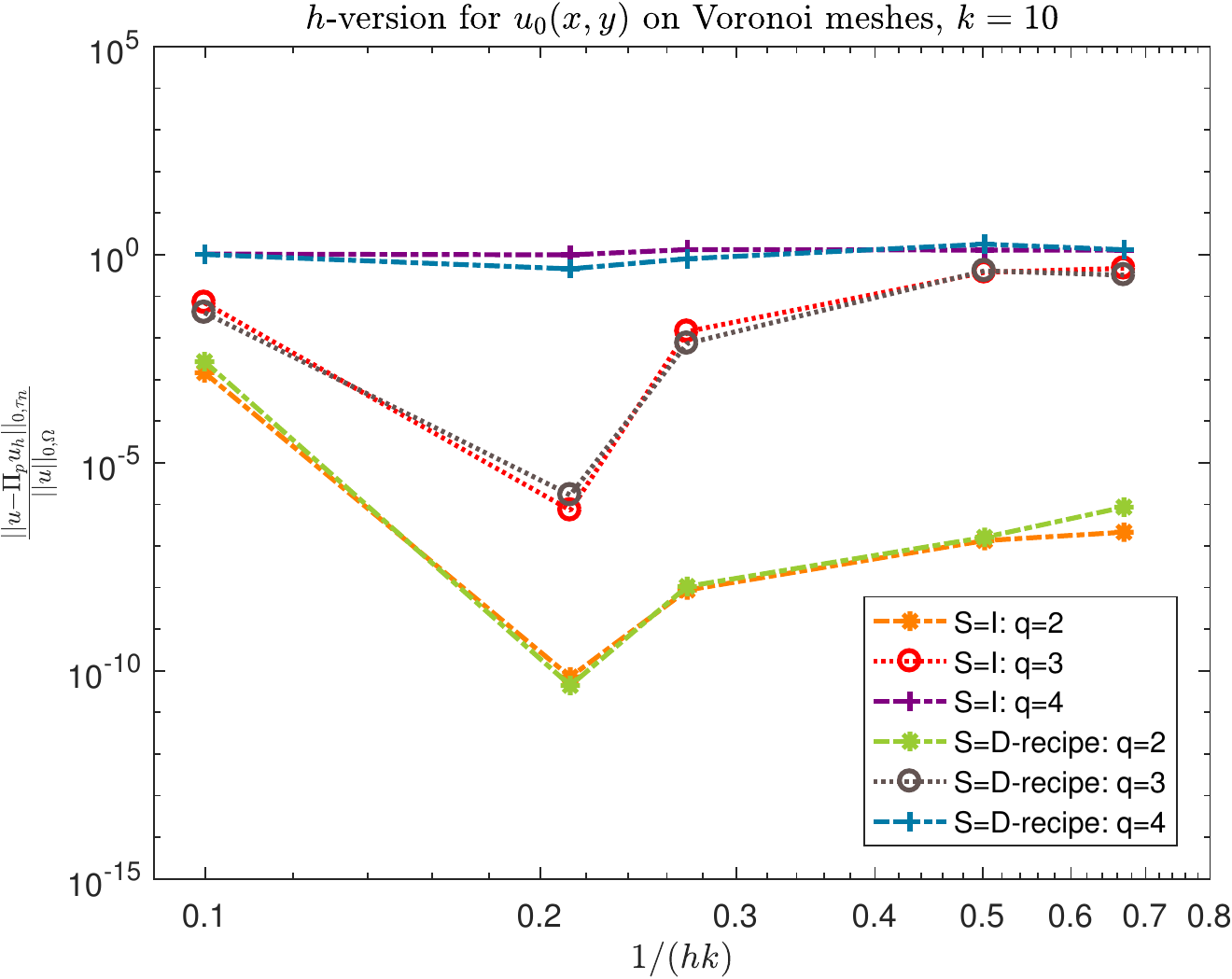}
\end{minipage}
\end{center}
\caption{Approximate relative $L^2$ bulk errors for the $h$-version of the method for $u_0$ in \eqref{exact solutions u0 u1} with $k=10$, $\q=2$, $3$, and $4$, on Cartesian meshes (\textit{left}) and Voronoi meshes (\textit{right}) with directions $\{\dir^{(0)}_\ell\}_{\ell=1}^p$ as in \eqref{pw directions},
and the identity and modified D-recipe stabilizations in \eqref{identity stabilization} and \eqref{modified D-recipe}, respectively.}
\label{fig:TEST1} 
\end{figure}

\begin{figure}[h]
\begin{center}
\begin{minipage}{0.45\textwidth} 
\includegraphics[width=\textwidth]{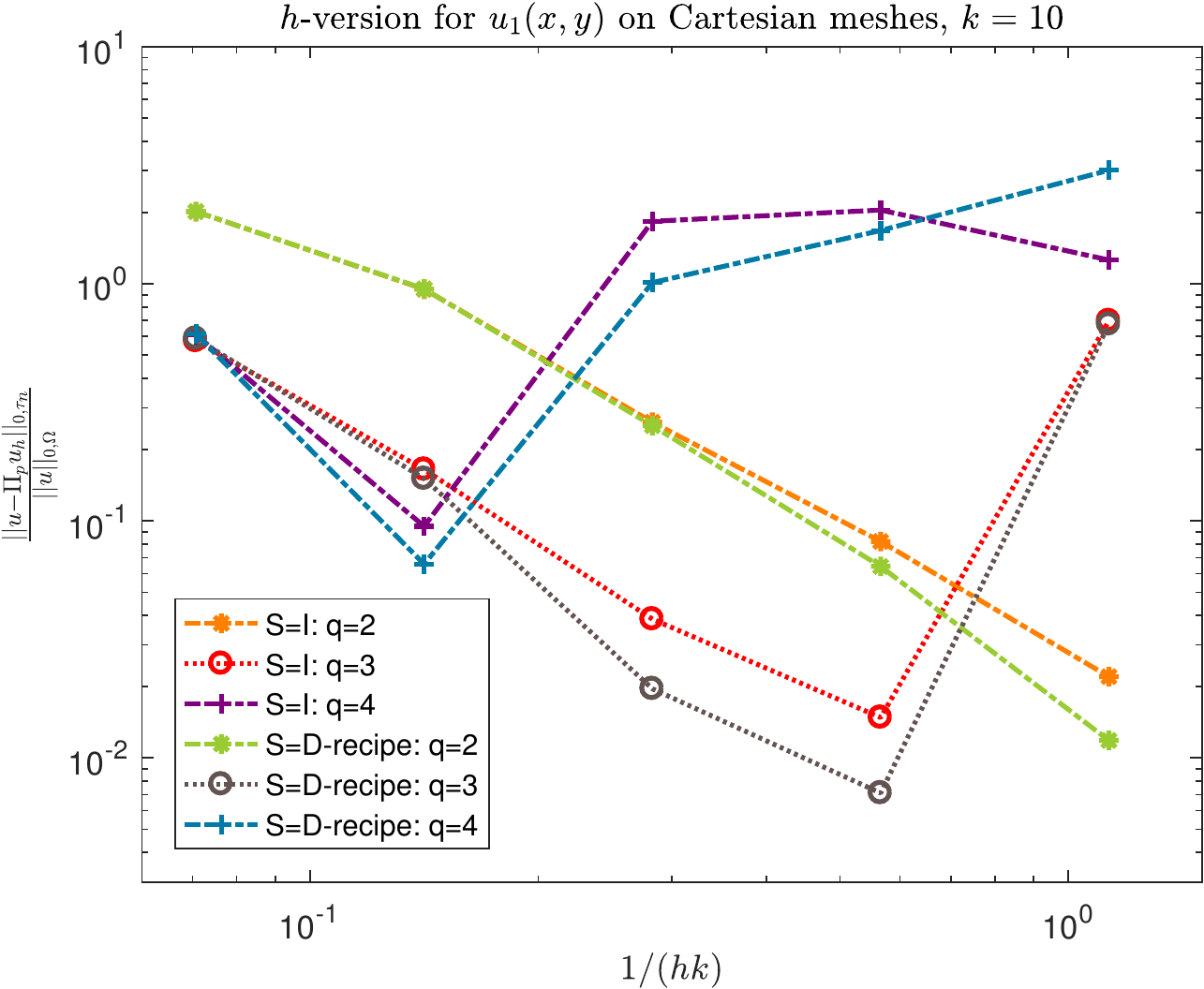}
\end{minipage}
\hspace{0.5cm}
\begin{minipage}{0.45\textwidth}
\includegraphics[width=\textwidth]{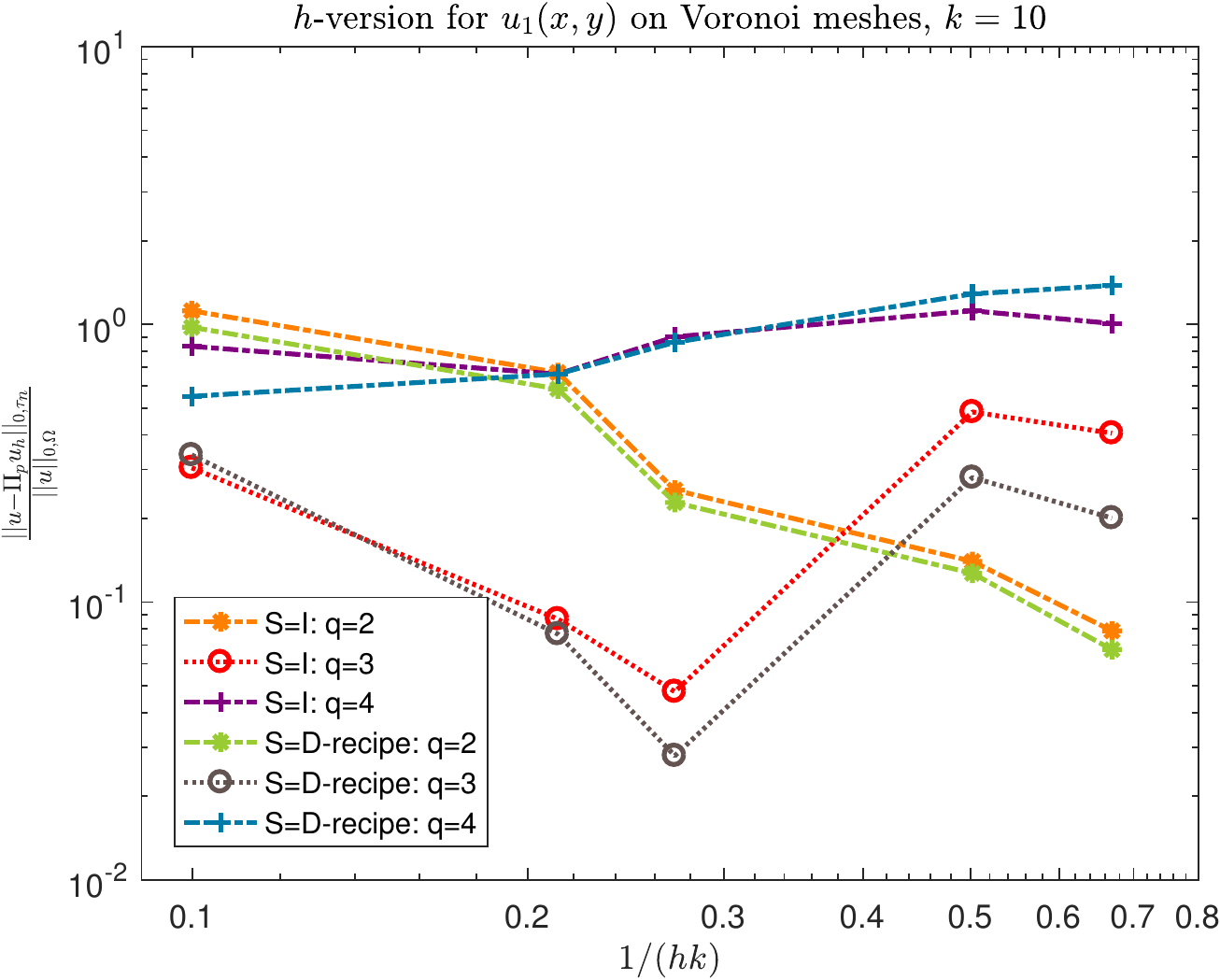}
\end{minipage}
\end{center}
\caption{Approximate relative $L^2$ bulk errors for the $h$-version of the method for $u_1$ in \eqref{exact solutions u0 u1} with $k=10$, $\q=2$, $3$, and $4$, on Cartesian meshes (\textit{left}) and Voronoi meshes (\textit{right}) with directions $\{\dir^{(0)}_\ell\}_{\ell=1}^p$ as in \eqref{pw directions},
and the identity and modified D-recipe stabilizations in \eqref{identity stabilization} and \eqref{modified D-recipe}, respectively.}
\label{fig:TEST2} 
\end{figure}

In all the cases, we notice that the method becomes unstable after very few mesh refinements.
This fact can be traced back to the computation of the Robin matrix $\boldsymbol{R}$ in \eqref{computation R} and of the right-hand side vector $\boldsymbol{f}$ in \eqref{computation F}.
Indeed, in both cases, we locally invert the edge plane wave mass matrices $\boldsymbol{G}_0^e$ in \eqref{G0e} on all boundary edges $e \in \Enb$. Such matrices are highly ill-conditioned; see Figure~\ref{fig:h cond nr G0e},
where the condition number of the matrix $\boldsymbol{G}_0^\e$ for the edge $\e$ with endpoints in $\boldsymbol{a}=[0,0]$ and $\boldsymbol{b}=[0,h]$
is depicted in dependence of~$h$ for the set of directions $\{\dir^{(0)}_\ell\}_{\ell=1}^p$ in~\eqref{pw directions} and for different values of $\q=2$, $3$, and $4$. 
In particular, one can also observe that the ill-conditioning grows together with the effective plane wave degree~$\q$.

\emph{Rebus sic stantibus}, the present version of the method is not reliable.
For this reason, we propose in Section~\ref{section cure illconditioning} a numerical recipe to mitigate this ill-conditioning.

\begin{figure}[h]
\begin{center}
\includegraphics[width=0.5\textwidth]{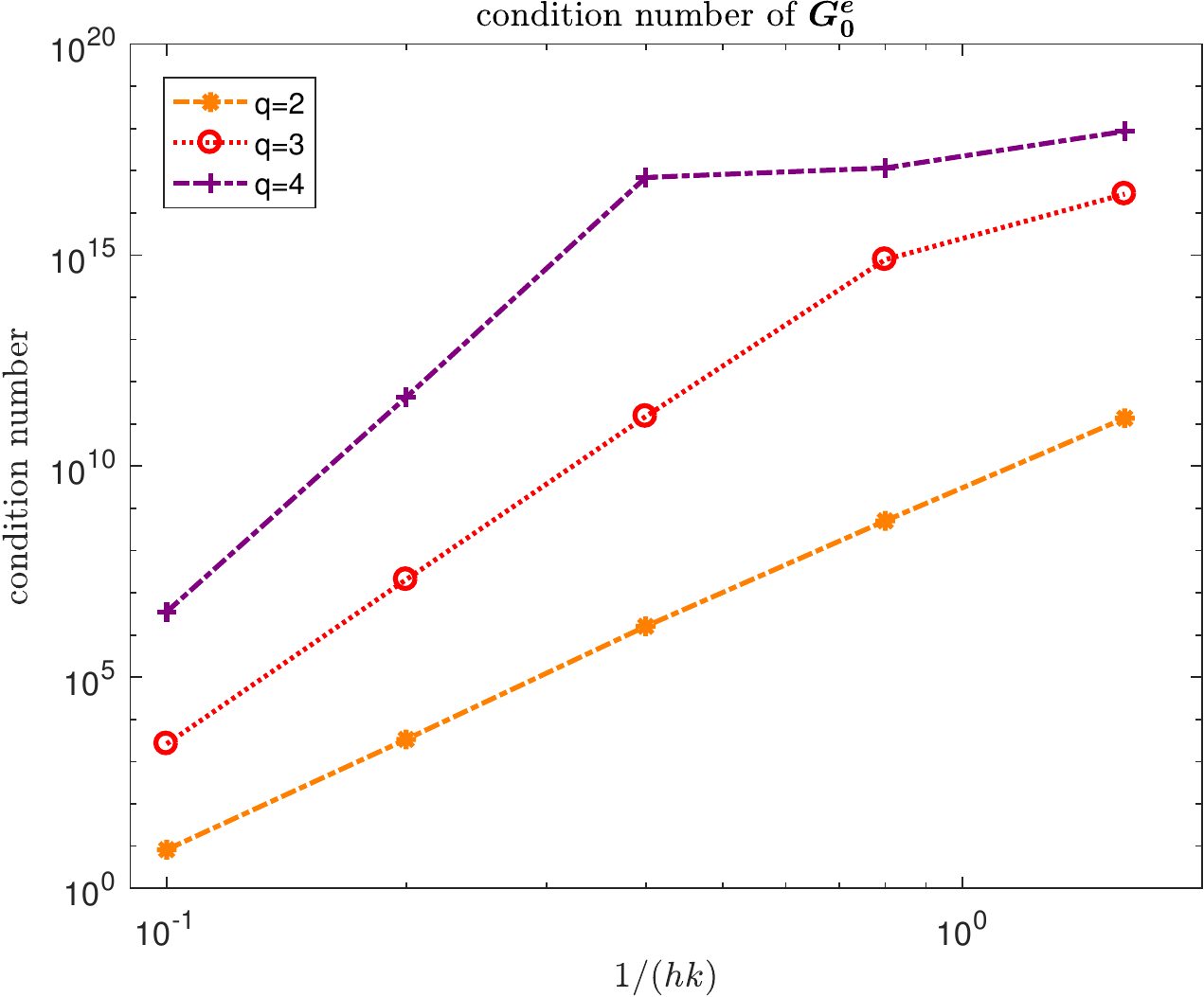}
\end{center}
\caption{Condition number of $\boldsymbol{G}_0^\e$ defined in~\eqref{G0e} for the edge~$\e$ with endpoints in $\boldsymbol{a}=[0,0]$ and $\boldsymbol{b}=[0,h]$
in terms of $\h\k$ for the set of directions $\{\dir^{(0)}_\ell\}_{\ell=1}^p$ in \eqref{pw directions} and different values of $\q=2$, $3$, and $4$.}
\label{fig:h cond nr G0e} 
\end{figure}

\section{The modified nonconforming Trefftz-VEM} \label{section cure illconditioning}
As discussed in Section~\ref{section numerical results standard}, the method~\eqref{complete method} as constructed in Section \ref{section implementational details} does not provide robust numerical performance.
The aim of this section is to describe a recipe to damp the condition number of the local Trefftz-VE matrices and make the method reliable.
In particular, in Section~\ref{subsection orthogonalized version}, we present a modification to the original method,
whose implementation aspects are described in Section \ref{subsection modified version}, and which is tested in Section~\ref{section numerical results}.
We deem that such a modification can be employed in other nonconforming settings.

\subsection{A cure for the ill-conditioning} \label{subsection orthogonalized version}
The main idea of the modification of the method is that, instead of applying the filtering process of Algorithm \ref{algorithm filtering process},
we first compute, on each edge $\e \in \En$, an eigendecomposition of the edge plane wave mass matrix $\boldsymbol{G_0^e}$:
\begin{equation} \label{eigendecomposition}
\boldsymbol {G}_0^\e \boldsymbol{Q}^\e =   \boldsymbol{Q}^\e \boldsymbol {\Lambda}^\e.
\end{equation}
Here, $\boldsymbol{G}_0^\e \in \R^{p \times p}$ is defined similarly as in \eqref{VEM edge matrices}, but using the traces of all bulk plane waves in $\PWE$ and not those after filtering as in $\PWc(\e)$, see \eqref{edge space with constant}.
Therefore, $\boldsymbol{G}_0^\e$ can be singular (e.g. when two bulk plane waves have the same trace on $e$). 
Moreover, we do not longer require that the constants belong to the plane wave trace space. Note that the requirement that the constant functions are contained in the plane wave trace spaces
was instrumental in the proof of the abstract error estimate in \cite{ncTVEM_theory}, but seems to be not necessary in practice.

In the decomposition~\eqref{eigendecomposition}, the matrices $\boldsymbol{Q}^\e \in \R^{p \times p}$ and $\boldsymbol{\Lambda}^\e \in \R^{p \times p}$ denote the eigenvector and eigenvalue matrices, respectively.
Equivalently, the $j$-th column of $\boldsymbol{Q}^\e$ contains the coefficients of the expansion of the new orthonormal plane wave $\widehat{w_j^e}$ with respect to the traces of the bulk plane waves $\w_\ell^\E$, $\ell=1,\dots,p$, on~$e$.

Next, we determine the positions of the eigenvalues on the diagonal of the matrix~$\boldsymbol{\Lambda}^\e$ which are zero or ``close'' to zero (up to a given tolerance $\sigma$), and we remove the corresponding columns of $\boldsymbol{Q}^e$. Doing so, we end up with a set of filtered orthonormalized plane waves. 
Having this, all the VE matrices discussed in Section \ref{section implementational details} are computed employing the new filtered basis. 

We highlight that this new filtering process is highly significant in presence of small edges and when employing a large number of initial plane wave basis functions. Moreover, it does not affect the rate of convergence of the method,
as we will see in the numerical experiments. Heuristically, this is not surprising since the traces of the removed plane waves ``almost'' (depending on the choice of~$\sigma$) belong to the span of the traces of the remaining ones.
A pseudo-code of this procedure is given in Algorithm \ref{algorithm orthog process}. 
\begin{algorithm}[h]
\caption{}
\label{algorithm orthog process}
Let $\sigma>0$ be a given tolerance. 
\begin{enumerate}
\item For all the edges $e \in \En$:
\begin{enumerate}
\item Assemble the real-valued, symmetric, and possibly singular matrix $\boldsymbol{G}_0^e \in \R^{\p \times \p}$ given as in \eqref{G0e} by
\begin{equation} \label{G0e new}
(\boldsymbol{G}_0^e)_{j,\ell}= (\wle,\wje)_{0,e}
\quad \forall j,\ell=1,\dots,\p.
\end{equation}
\item Starting from $\boldsymbol{G}_0^e$, compute the eigendecomposition \eqref{eigendecomposition}:
\begin{equation*}
\boldsymbol {G}_0^\e \boldsymbol{Q}^\e =   \boldsymbol{Q}^\e \boldsymbol {\Lambda}^\e,
\end{equation*}
where $\boldsymbol{Q}^e \in \R^{\p \times \p}$ is a matrix whose columns are right-eigenvectors, and $\boldsymbol{\Lambda}^\e \in\R^{\p \times \p}$ is a diagonal matrix containing the corresponding eigenvalues. 
\item Determine the eigenvalues with (absolute) value smaller than the tolerance $\sigma$ and remove the columns of $\boldsymbol{Q}^e$ corresponding to these eigenvalues.
Denote the number of remaining columns of $\boldsymbol{Q}^e$ by $\pehat \le \p$.
The remaining columns of $\boldsymbol{Q}^e$ are relabelled by $1,\dots,\pehat$.
\item Define the new $L^2(\e)$ orthonormal edge functions $\wlehat$, $\ell=1,\dots,\pehat$, in terms of the old ones $\wre$, $r=1,\dots,\p$, as
\begin{equation} \label{new basis fcts}
\wlehat:=\sum_{r=1}^{\p} \boldsymbol{Q}^e_{r,\ell} \, \wre.
\end{equation}
\end{enumerate}
\item By using \eqref{new basis fcts}, build up the new local matrices $\Ghat$, $\Bhat$, and $\Dhat$ for every element $\E \in \taun$, and assemble the global matrices $\Ahat$, $\Rhat$, and the global right-hand side vector $\Fhat$. 
\end{enumerate}
\end{algorithm}

\begin{remark} \label{remark on the second filtering}
We highlight that the influence of the choice of the parameter~$\sigma$ in Algorithm~\ref{algorithm orthog process} on the convergence of the method will be discussed in Remark~\ref{remark on sigma}.
Further, we note that, from a practical point of view, due to the presence of eigenvalues/singular values close to zero,
the computation of an orthonormal basis in \texttt{Matlab} via the eigendecomposition in step~1(b) in Algorithm~\ref{algorithm orthog process} seems to be more robust than other procedures, such as SVD.
\end{remark}
\begin{remark} \label{second remark on the second filtering}
The strategy presented in Algorithm~\ref{algorithm orthog process} seems to be natural in the nonconforming setting.
In fact, the basis functions are defined implicitly inside each elements by prescribing explicit conditions on the traces on each edge, and thus they can be modified edgewise without affecting their behavior on the other edges.
This is not the case, for instance, in DG methods, where a modification of the basis functions implies a change in the behavior of such functions over {\it all} the edges.
\end{remark}

\subsection{Details on the implementation of the modified method} \label{subsection modified version}
Here, we discuss some aspects of the implementation of the modified nonconforming Trefftz-VEM defined as in Algorithm \ref{algorithm orthog process}.

\paragraph*{Definition of the new degrees of freedom and canonical basis functions.}
Given $\E \in \taun$, let $\VEhat$ be defined similarly as $\VE$ in~\eqref{local Trefftz-VE space}, where the only difference is that the space $\PWc(e)$ in \eqref{local Trefftz-VE space} is replaced by $\PWE_{|_e}$. 
In addition, given $\e \in \En$, let $\{\wlehat \}_{\ell=1}^{\pehat}$ be the set of the new ($L^2$ orthonormal) edge functions determined with the Algorithm \ref{algorithm orthog process}. The definitions of the global nonconforming Trefftz-VE spaces in \eqref{global trial Trefftz space} and \eqref{global test Trefftz space}, and of the $L^2$ projector in \eqref{projector edge L2} are changed accordingly. 

Using~\eqref{new basis fcts}, we modify the degrees of freedom and the definition of the canonical basis functions as follows.
The new local degrees of freedom $\{\widehat{\dof}_{r,j}\}_{r=1,\dots,\Ne, \, j=1,\dots,\widehat{\p}_{e_r}}$ related to an element $\E \in \taun$ are given, for any $\vn \in \VEhat$, as 
\begin{equation} \label{dofs new}
\widehat{\dof}_{r,j}(\vn) := \frac{1}{\h_{e_r}} \int_{\e_r} \vn \overline{\widehat{w}_j^{e_r}} \, \ds \quad \forall j=1,\dots,\widehat{\p}_{e_r}.
\end{equation}
Further, the set of the new local canonical basis functions $\{\widehat{\varphi}_{s,\ell}\}_{s=1,\dots,\Ne,\, \ell=1,\dots,\widehat{\p}_{e_s}}$ associated with the local set of degrees of freedom~\eqref{dofs new} is the set of functions in the space $\VEhat$ with the property that
\begin{equation*} 
\begin{split}
\widehat{\dof}_{r,j}(\widehat{\varphi}_{s,\ell}) = \delta_{r,s} \delta_{j,\ell}, \quad \forall r,s=1,\dots,\Ne, \, \forall j=1,\dots,\widehat{\p}_{e_r}, \, \forall \ell=1,\dots,\widehat{\p}_{e_s}.
\end{split}
\end{equation*}
As usual, the sets of global degrees of freedom and of the canonical basis functions are obtained by coupling the local counterparts in a nonconforming fashion.

Next, we show how the new matrices $\Ghat$, $\Bhat$, $\Dhat$, $\Ahat$, and $\Rhat$, and the new discrete right-hand side $\Fhat$, counterparts of those described in Section \ref{section implementational details}, can be built starting from the original ones.

\paragraph*{Computation of new local matrices.}
\begin{itemize}
\item $\Ghat$: This matrix coincides with $\boldsymbol{G}^\E$ since it is computed via plane waves in the bulk. 
\item $\Bhat$: For all $j=1,\dots,p$, $s=1,\dots,\Ne$, $\ell=1,\dots,\widehat{\p}_{e_s}$, it holds
\begin{equation} \label{B orth no bdry}
\begin{split}
(\Bhat)_{j,(s,\ell)} := \aE(\widehat{\varphi}_{s,\ell},\wjE)
= -\im\k (\djj \cdot {\nE}_{|_{e_s}}) e^{-\im\k \djj \cdot (\x_{e_s}-\xE)} \int_{e_s} \widehat{\varphi}_{s,\ell} \, \overline{e^{\im\k \djj \cdot (\x-\x_{e_s})}} \, \ds.
\end{split}
\end{equation}
Expressing the old edge function $w_j^{e_s}$ in terms of the novel ones
\begin{equation} \label{expansion}
w_j^{e_s}=\sum_{\zeta=1}^{\widehat{\p}_{e_s}} (\boldsymbol{Q}^e)^T_{\zeta,j} \, \widehat{w}_{\zeta}^{e_s},
\end{equation}
and	plugging this into \eqref{B orth no bdry} lead to
\begin{equation*} 
(\Bhat)_{j,(s,\ell)} = -\im\k \overline{(\boldsymbol{Q}^e)^T_{\ell,j}} (\djj \cdot {\nE}_{|_{e_s}}) e^{-\im\k \djj \cdot (\x_{e_s}-\xE)} h_{e_s}.
\end{equation*}
\item $\Dhat$:
Given $r \in \ME$, $j=1,\dots,\widehat{\p}_{e_r}$, $\ell=1,\dots,p$, a direct computation based again on the expansion \eqref{expansion} gives
\begin{equation*} 
(\Dhat)_{(r,j),\ell} := \widehat{\dof}_{r,j}(\wlE)=\frac{1}{h_{e_r}} \int_{e_r} \wlE \overline{\widehat{w}_j^{e_r}} \, \ds
= \sum_{\zeta=1}^{\p} \overline{\boldsymbol{Q}^e_{\zeta,j}} \frac{1}{h_{e_r}} \int_{e_r} \wlE \overline{w_{\zeta}^{e_r}} \, \ds.
\end{equation*}
\item $\Ahat$: Starting from the local matrices 
\begin{equation*}
\Ahat^\E = \overline{\Bhat}^T \overline{\Ghat}^{-T} \Bhat + \overline{(\Ihat-\Pihat)}^T \Shat (\Ihat-\Pihat),
\end{equation*}
see~\eqref{matrix A}, $\Ahat$ is assembled as in~\eqref{matrix AK}, where~$\Pihat$ is defined similarly as in~\eqref{definition Pi 2}.

\item $\Rhat$: We need to compute  
\begin{equation*}
\begin{split}
\widehat{\boldsymbol{R}}_{(\tilde{r},\tilde{j}),(\tilde{s},\tilde{\ell})}=\im\k\theta \sum_{\e \in \EnR} \int_{\e} &(\Pie \widehat{\varphi}_{\tilde{s},\tilde{\ell}}) \overline{(\Pie \widehat{\varphi}_{\tilde{r},\tilde{j}})} \, \ds \\
&\forall \tilde{r},\tilde{s}=1,\dots,N_e, \, \forall \tilde{j}=1,\dots,\widehat{\p}_{e_{\tilde{r}}}, \, \forall \tilde{\ell}=1,\dots,\widehat{\p}_{e_{\tilde{s}}}.
\end{split}
\end{equation*}
Given $e \in \EnR$, we only describe here the assembly of the matrix $\Rhate \in \C^{\pe \times \pe}$, which takes into account the local contributions of the basis functions associated with $e$. Then, $\Rhat$ is assembled as in \eqref{matrix Re}. Given $z$ the local index of $e$, for every $j,\ell=1,\dots,\pehat$, it holds
\begin{equation} \label{Robin orthog}
(\Rhate)_{\ell,j} = \int_e (\Pie \widehat{\varphi}_{z,j}) \overline{(\Pie \widehat{\varphi}_{z,\ell})} \, \ds.
\end{equation}
By writing each $\Pie \widehat{\varphi}_{z,j}$, $j=1,\dots,\pehat$, as a linear combination of the $L^2(e)$ orthonormal plane waves $\widehat{w}_\theta^{e}$, $\theta=1,\dots,\pehat$, and inserting this into \eqref{Robin orthog}, one obtains
\begin{equation*}
\Rhate=\overline{(\widehat{\boldsymbol{B}}_0^e)}^T (\overline{\widehat{\boldsymbol{G}}_0^e)}^{-T} \widehat{\boldsymbol{B}}_0^e,
\end{equation*}
where
\begin{equation*}
(\widehat{\boldsymbol{B}}_0^e)_{j,\ell} = (\widehat{\varphi}_{z,\ell},\widehat{w}_j^{e})_{0,e} = \he \delta_{j,\ell} \quad \forall j, \ell=1,\dots,\pehat,
\end{equation*}
and
\begin{equation*}
(\widehat{\boldsymbol{G}}_0^e)_{j,\ell} = (\widehat{\w}_\ell^e,\widehat{\w}_j^e)_{0,e} = \sum_{\zeta,\eta=1}^{\pehat} \boldsymbol{Q}^e_{\eta,\ell} \overline{\boldsymbol{Q}^e_{\zeta,j}} \int_e w_{\eta}^{e} \overline{w_{\zeta}^{e}} \, \ds,
\end{equation*}
which can be represented as
\begin{equation*}
\widehat{\boldsymbol{G}_0^e} = \overline{(\boldsymbol{Q}^e)}^T \boldsymbol{G}_0^e \, \boldsymbol{Q}^e
\end{equation*}
with $\boldsymbol{G}_0^e$ given in \eqref{G0e new}.

\item $\Fhat:=\widehat{\boldsymbol{f}}^N+\widehat{\boldsymbol{f}}^R$: We restrict here ourselves to the computation of $\widehat{\boldsymbol{f}}^N$, which is given by
\begin{equation*}
(\widehat{\boldsymbol{f}}^N)_{(\tilde{r},\tilde{j})}=
\sum_{e \in \EnN} \int_{e} \gN \overline{(\Pie \widehat{\varphi}_{\tilde{r},\tilde{j}})} \quad \forall \tilde{r}=1,\dots,N_e, \, \forall \tilde{j}=1,\dots,\widehat{\p}_{e_{\tilde{r}}}.
\end{equation*}
The local vector $\widehat{\boldsymbol{f}}^{N,e} \in \C^{\pe}$ for a given $e \in \EnN$, with $z$ denoting again the local index associated with $e$, has the form
\begin{equation*}
\begin{split}
(\widehat{\boldsymbol{f}}^{N,e})_{\ell} 
=\int_e \gN \overline{(\Pie\widehat{\varphi}_{z,\ell})} \, \ds
=\sum_{\eta=1}^{\pehat} \widehat{\beta}_{\eta}^{(\ell)} \int_e \gN \overline{\widehat{w}_{\eta}^{e}} \, \ds = \sum_{\eta=1}^{\pehat} \sum_{\zeta=1}^{\p} \widehat{\beta}_{\eta}^{(\ell)} (\boldsymbol{Q}^e)_{\zeta,\eta} \int_\e \gN \overline{w_{\zeta}^{e}} \, \ds.
\end{split}
\end{equation*}
\item The Dirichlet boundary conditions are incorporated in the global system of linear equations as already shown in Section \ref{subsection gammaD}, by requiring that
\begin{equation*}
\int_{e_\zeta} (\un - \gD)  \overline{\widehat{w}^{e_\zeta}_j} \, \ds = 0  \, \quad \forall j=1,\dots,\widehat{\p}_{e_\zeta}, \, \forall e_\zeta \in \EnD,
\end{equation*}
which leads to
\begin{equation*} 
u_{\zeta,j} = \frac{1}{h_{e_\zeta}} \int_{e_\zeta} \gD \overline{\widehat{w}^{e_\zeta}_j} \, \ds
= \frac{1}{h_{e_\zeta}} \sum_{r=1}^{\p} \overline{(\boldsymbol{Q}^{e_\zeta}_{r,j})} \int_{e_\zeta} \gD \overline{w_r^{e_\zeta}} \, \ds
\quad \forall j=1,\dots,\widehat{p}_{e_{\zeta}}, \, \forall e_\zeta \in \EnD.
\end{equation*}
\end{itemize}

\subsection{Numerical results with the modified method} \label{section numerical results}
In this section, we discuss the $h$-, $p$-, and $hp$-versions of the modified method and assess the improvements in the numerical performance.
We will see that the modified method is not only better conditioned, but also the number of degrees of freedom needed to achieve a given accuracy of the numerical approximation is significantly lower than in the original version in Section \ref{section implementational details}.
Moreover, we compare the modified nonconforming Trefftz-VEM with the PWVEM of~\cite{Helmholtz-VEM} and with the more established PWDG method~\cite{ncTVEM_theory}.

In all the numerical tests throughout this paper, the tolerance~$\sigma$ in Algorithm~\ref{algorithm orthog process} is set to $10^{-13}$. Other choices and their influence on the method are discussed in Remark~\ref{remark on sigma}.

Additionally to the boundary value problems~\eqref{weak continuous problem} on $\Omega:=(0,1)^2$ with known solutions $u_0$ and $u_1$ in \eqref{exact solutions u0 u1}, we consider boundary value problems for $\theta=1$ and $\GammaR = \partial \Omega$ with exact solutions 
\begin{equation} \label{exact solutions u2 u3}
\begin{split}
u_2(x,y)&:=H_0^{(1)}(k|\x-\x_0|), \quad \x_0=(-0.25,0),
\\
u_3(x,y)&:=J_{\xi}(\k r) \cos\left(\xi \theta\right), \quad \xi=\frac{2}{3},
\end{split}
\end{equation}
where $H_0^{(1)}$ is the zeroth-order Hankel functions of the first kind, $J_\xi$ denotes the Bessel function of the first kind, and $r$ and $\theta$ are the polar coordinates of $(x,y-0.5)$, see~\cite[Chapters 9 and 10]{AbramowitzStegun_handbook}.
Note that the function $u_2$ is analytic over $\Omega$, but $u_3$ has a singularity at $(0,0.5)$; more precisely, $u_3 \in H^{\xi+1-\epsilon}(\Omega)$ for all $\epsilon>0$ arbitrarily small, but $u_3 \notin H^{\xi+1}(\Omega)$.
The contour plots of the real parts for the two test cases in~\eqref{exact solutions u2 u3} with $\k=20$ are plotted in Figure~\ref{fig:real parts exact solutions 2}. 

\begin{figure}[h]
\centering
\begin{subfigure}[b]{0.475\textwidth}
\centering
\includegraphics[width=\textwidth]{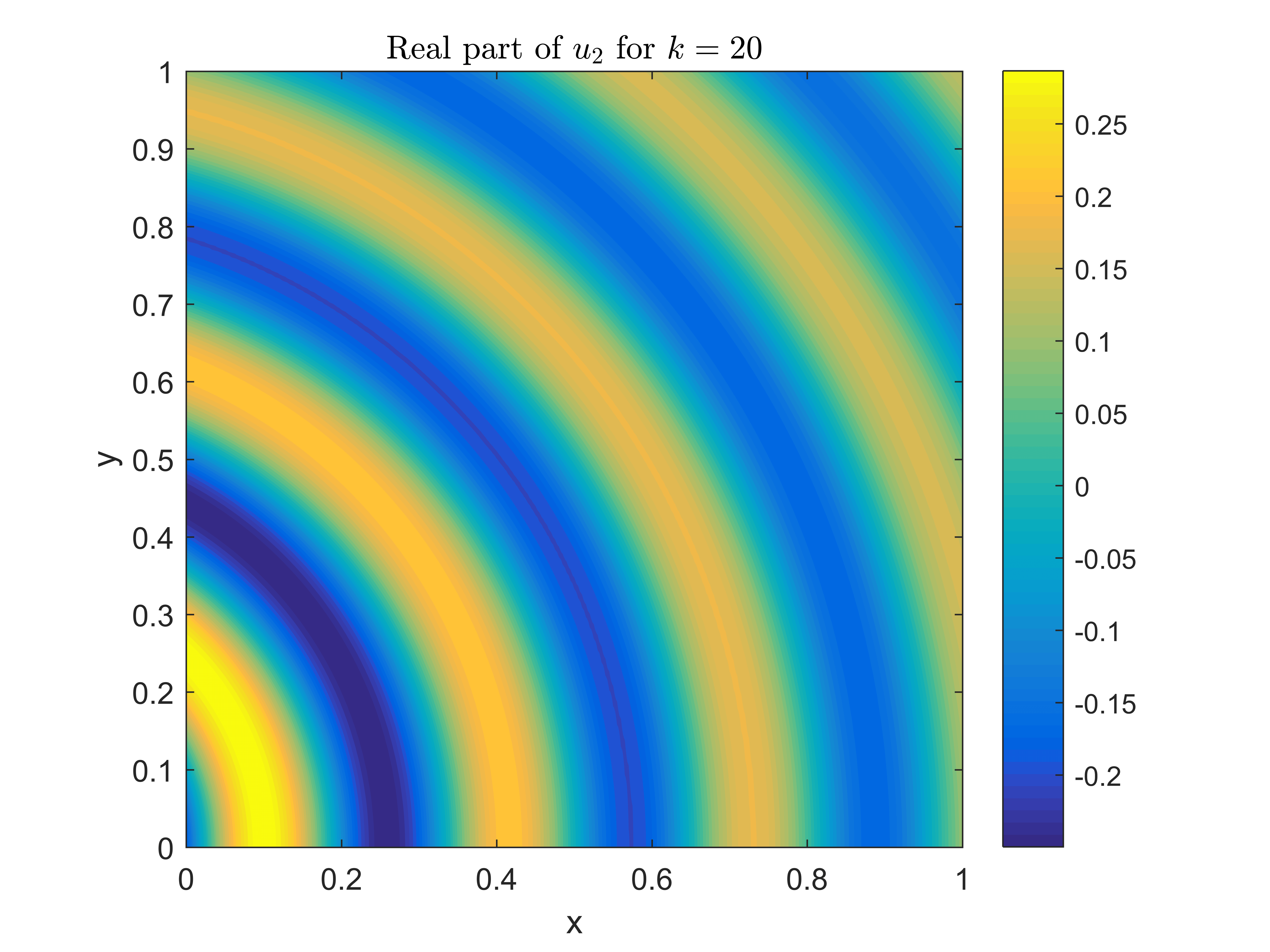}
\end{subfigure}
\hfill
\begin{subfigure}[b]{0.475\textwidth}  
\centering 
\includegraphics[width=\textwidth]{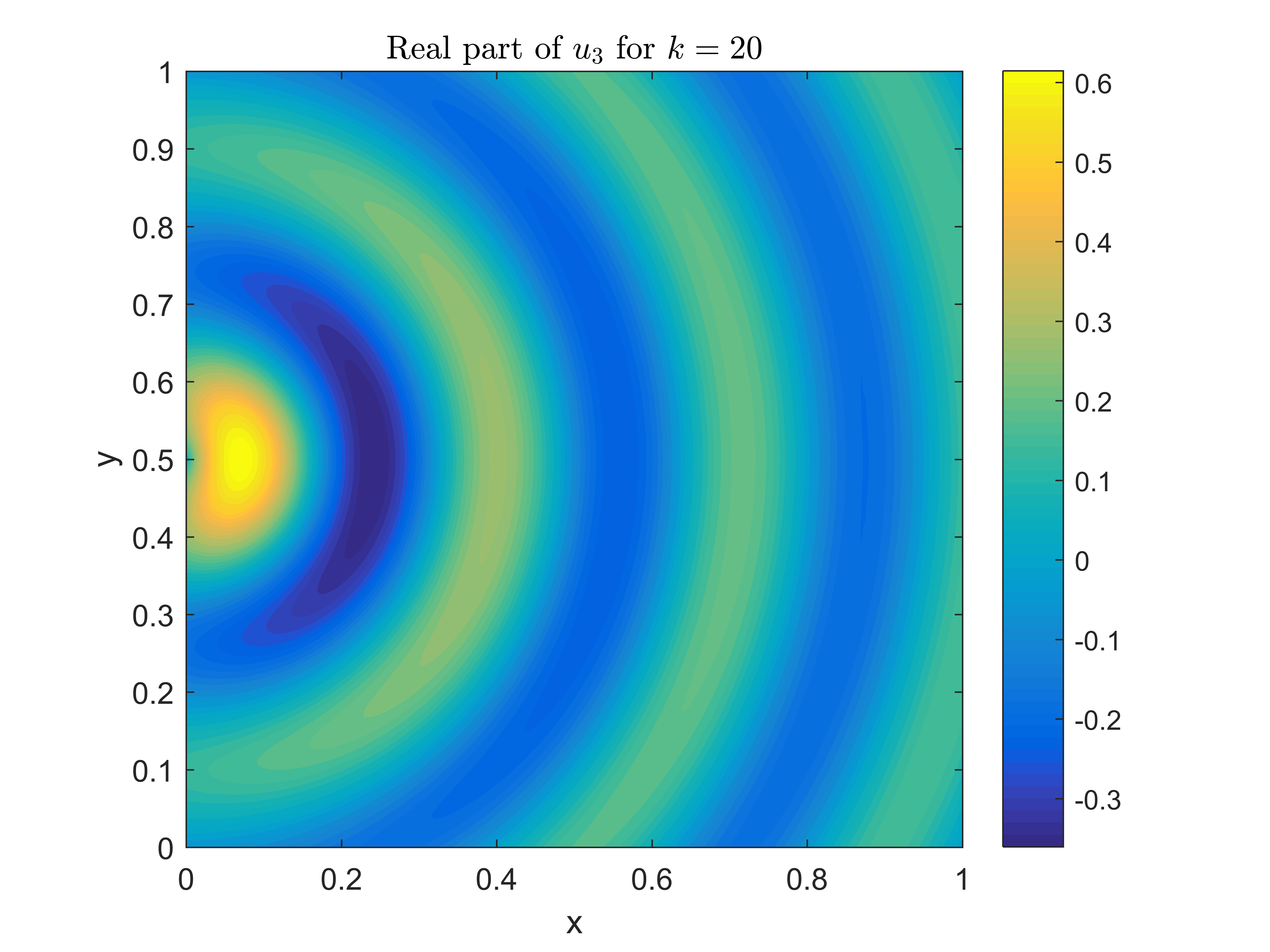}
\end{subfigure}
\caption{Real parts of the functions $u_2$ (\textit{left}) and $u_3$ (\textit{right}) defined in \eqref{exact solutions u2 u3} for $k=20$.} 
\label{fig:real parts exact solutions 2}
\end{figure}

We also consider the test of a scattering problem in Section \ref{subsection scattering} (here, $\GammaR \neq \partial \Omega$).

\subsubsection{$h$-version} \label{section hversion num recipe}
We first show the modified method on the \textit{patch test} $u_0$ defined in \eqref{exact solutions u0 u1} to check the consistency~\eqref{consistency} and to validate the gain in robustness with respect to the original version, cf. Section~\ref{section numerical results standard}.
Let $\{\dir^{(0)}_\ell\}_{\ell=1}^p$ be the set of directions given in \eqref{pw directions}.
The numerical experiments are again performed on sequences of quasi-uniform Cartesian meshes and Voronoi-Lloyd meshes, see Figure \ref{fig:meshes}, for $\k=10$ and $20$, and effective plane wave degree $\q=4$ and~$7$. Recall that the number of used bulk plane waves is $p=2q+1$.
Further, we employ the \textit{modified D-recipe stabilization} in~\eqref{modified D-recipe}.
In Figure~\ref{fig:TEST3_H1}, the approximate relative $H^1$ bulk errors in~\eqref{rel_errors} are plotted.  
\begin{figure}[h]
\begin{center}
\begin{minipage}{0.45\textwidth} 
\includegraphics[width=\textwidth]{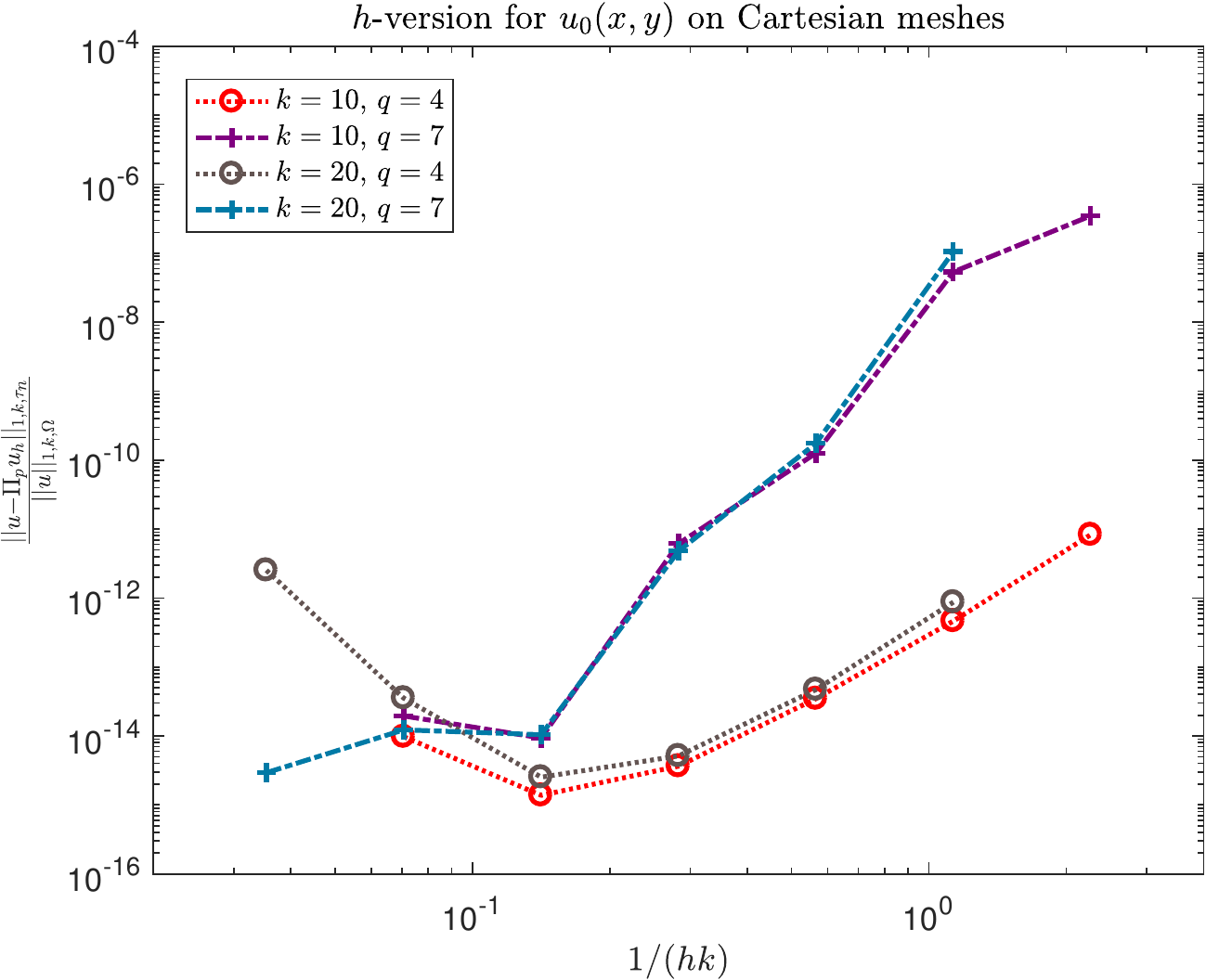}
\end{minipage}
\hspace{0.5cm}
\begin{minipage}{0.45\textwidth}
\includegraphics[width=\textwidth]{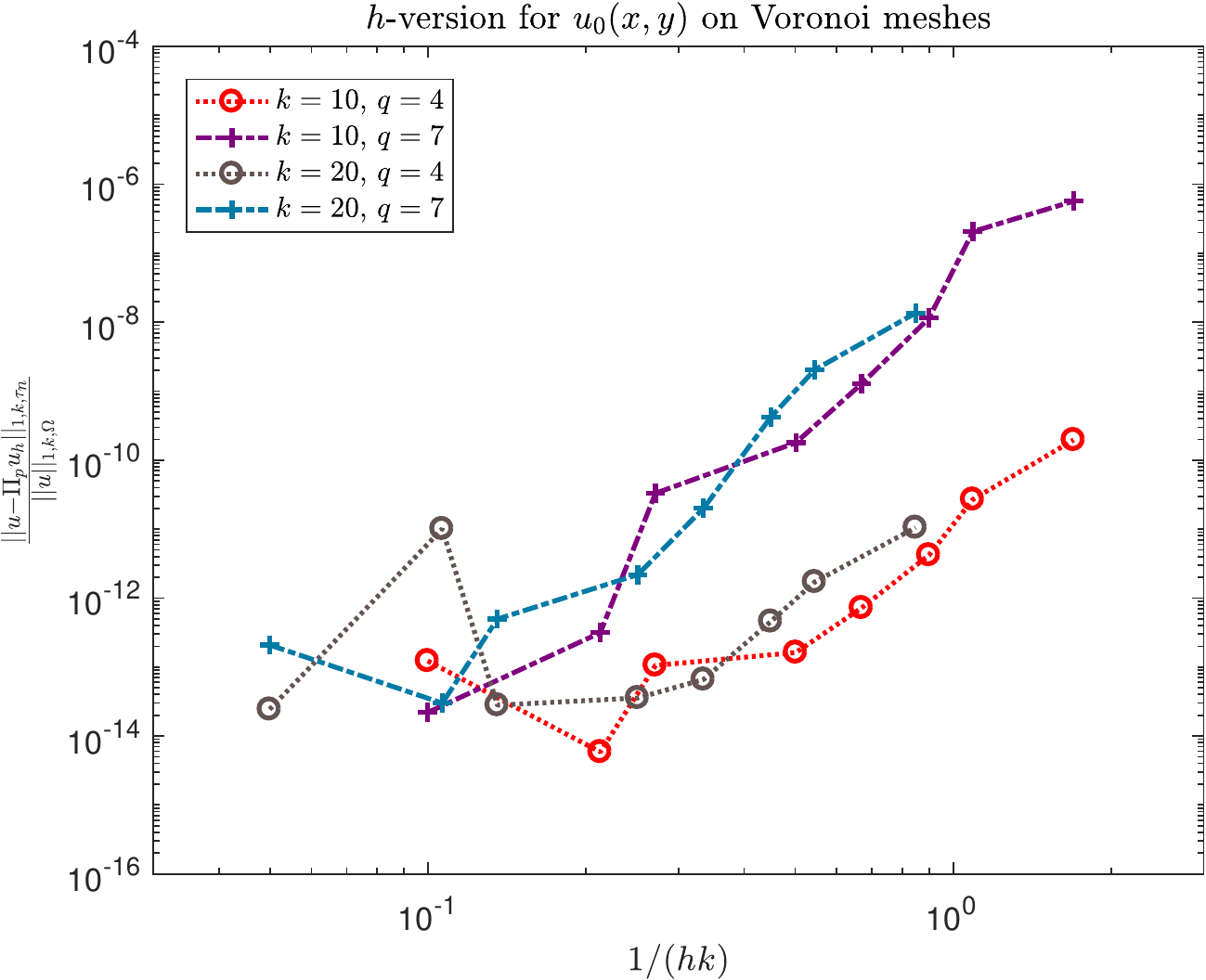}
\end{minipage}
\end{center}
\caption{$h$-version of the method for $u_0$ in \eqref{exact solutions u0 u1} with $\k=10$ and $20$, and $q=4$ and $7$,
with the sets of directions $\{\dir^{(0)}_\ell\}_{\ell=1}^p$ as in \eqref{pw directions} and the modified D-recipe stabilization \eqref{modified D-recipe} on Cartesian meshes (\textit{left}) and Voronoi meshes (\textit{right}).}
\label{fig:TEST3_H1} 
\end{figure}
We observe that the patch test is fulfilled for meshes with a moderately small mesh size. The plots indicate that the modified version is much more stable than the original one, see Figure~\ref{fig:TEST1}.
Nevertheless, also this modified version is affected by ill-conditioning, which results in the increase of the errors for decreasing mesh size $h$, as typical of plane wave-based methods.

As a second test, we investigate the $\h$-version for the exact solution $u_1$ in \eqref{exact solutions u0 u1} with $\k=10$, $20$, and~$40$, and $\q=4$ and~$7$, employing the same choice of directions, meshes, and stabilizations as before.
The numerical results are depicted for the Cartesian meshes in Figure~\ref{fig:TEST4_D_cart} and Table~\ref{tab:TEST4_D_cart} ($k=20$, $q=7$), and for the Voronoi meshes in Figure~\ref{fig:TEST4_D_voro} and Table~\ref{tab:TEST4_voro_D} ($k=20$, $q=7$).
In all cases the errors were computed accordingly with~\eqref{rel_errors}. In Table~\ref{tab:TEST4_D_cart} and~\ref{tab:TEST4_voro_D} we further compare the number of degrees of freedom
using the modified version of the method with the original one. The reduction of degrees of freedom in \% is presented in the last column.

Here, we mention that the tests with exact solution~$u_2$ give similar results to those for the smooth solution $u_1$ and are postponed to Sections \ref{section comp Trefftz PWVEM} and \ref{subsection PWDG},
where the modified nonconforming Trefftz-VEM will be compared with the PWVEM \cite{Helmholtz-VEM} and the PWDG \cite{GHP_PWDGFEM_hversion}, respectively.

\begin{figure}[h]
\begin{center}
\begin{minipage}{0.48\textwidth} 
\includegraphics[width=\textwidth]{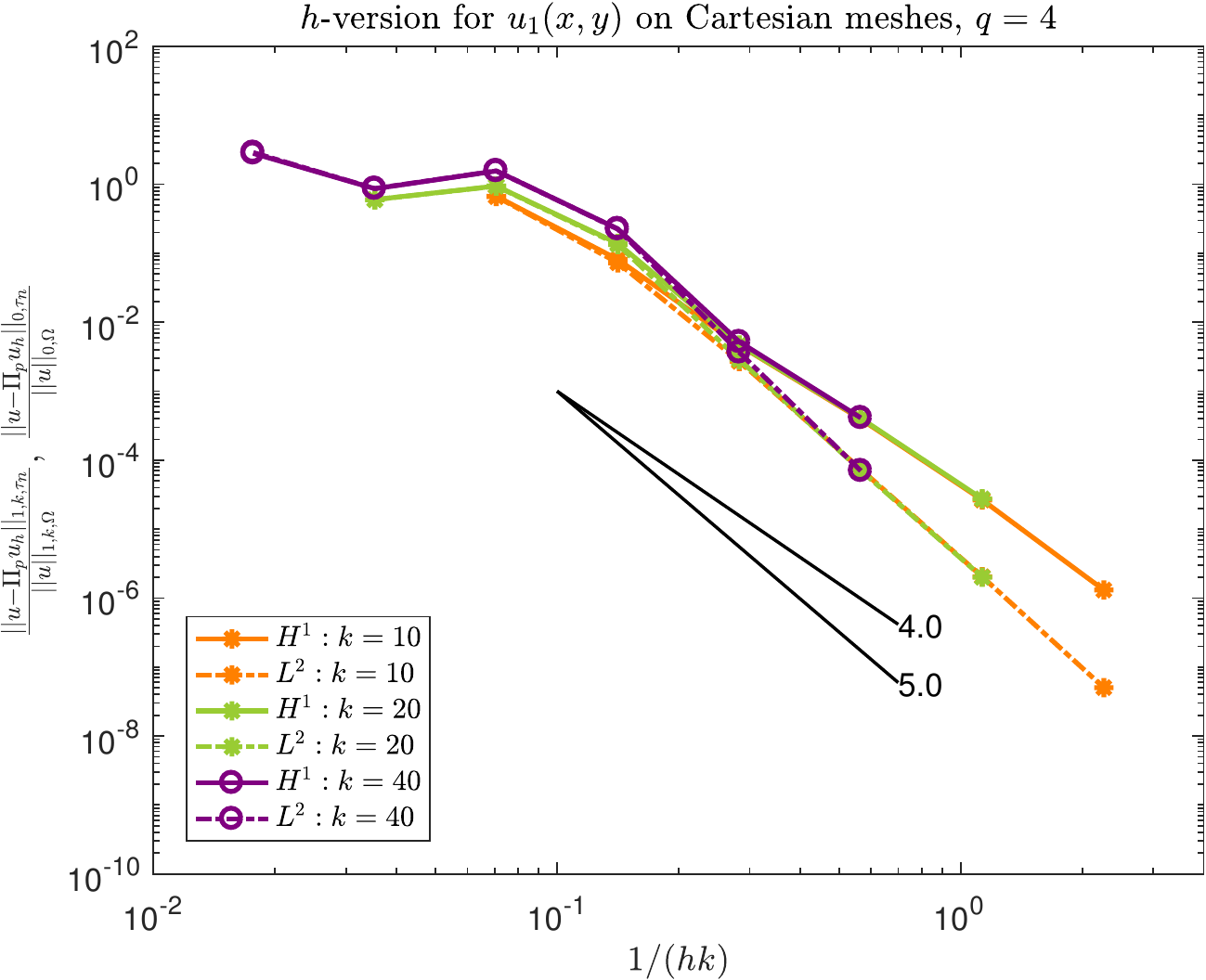}
\end{minipage}
\hfill
\begin{minipage}{0.48\textwidth}
\includegraphics[width=\textwidth]{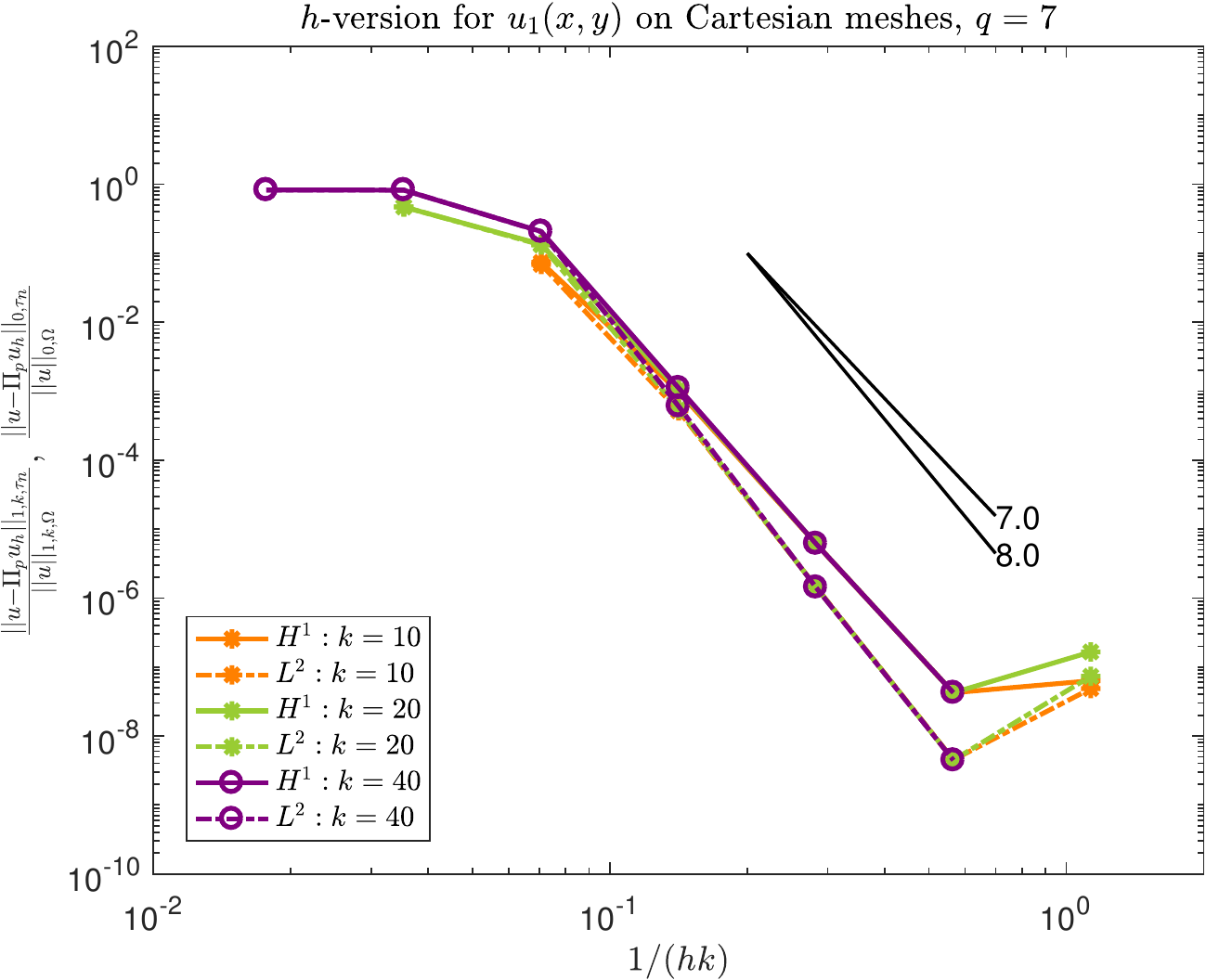}
\end{minipage}
\end{center}
\caption{$h$-version of the modified method for $u_1$ in \eqref{exact solutions u0 u1} with $\k=10$, $20$, and~$40$, and $q=4$ (\textit{left}) and $7$ (\textit{right}), with the sets of directions $\{\dir^{(0)}_\ell\}_{\ell=1}^p$  as in \eqref{pw directions}
and the modified D-recipe stabilization \eqref{modified D-recipe} on Cartesian meshes.}
\label{fig:TEST4_D_cart} 
\end{figure}

\begin{center}
\begin{tabular}{|c||c||c|c||c|c||c|c|}
\hline 
$h$	& $\Nd$ & rel. $H^1$ error & rate & rel. $L^2$ error & rate & $\Nd$ orig. & red. ($\%$) \\ 
\hline \hline 
1.414e+00 	& 46 	 &  4.6885e-01 &  ---   & 4.7153e-01  &	 ---	& 48 & 4.17 \\ 
7.071e-01 	& 120 	 &  1.3527e-01 & 1.793  & 1.3185e-01 &	1.838 	& 144 & 16.67 \\
3.535e-01 	& 340 	 &  1.0540e-03 & 7.004  & 5.4861e-04 &	7.909 	& 480 & 29.17 \\
1.767e-01 	& 1008 	 &  6.1594e-06 & 7.419  & 1.4439e-06 &	8.570 	& 1728 & 41.67 \\
8.838e-02 	& 3264 	 &  4.2394e-08 & 7.183  & 4.4716e-09 &	8.335 	& 6528 & 50.00 \\
4.419e-02 	& 10560  &  1.6544e-07 & -1.964 & 7.3453e-08 &	-4.038  & 25344 & 58.33 \\
\hline 
\hline 
\end{tabular} 
\captionof{table}{Relative errors for $u_1$ in \eqref{exact solutions u0 u1} with $k=20$, $q=7$, and the directions $\{\dir^{(0)}_\ell\}_{\ell=1}^p$ as in \eqref{pw directions} on Cartesian meshes employing the modified method with the modified D-recipe stabilization \eqref{modified D-recipe}.}
\label{tab:TEST4_D_cart}
\end{center}

\begin{figure}[h]
\begin{center}
\begin{minipage}{0.48\textwidth} 
\includegraphics[width=\textwidth]{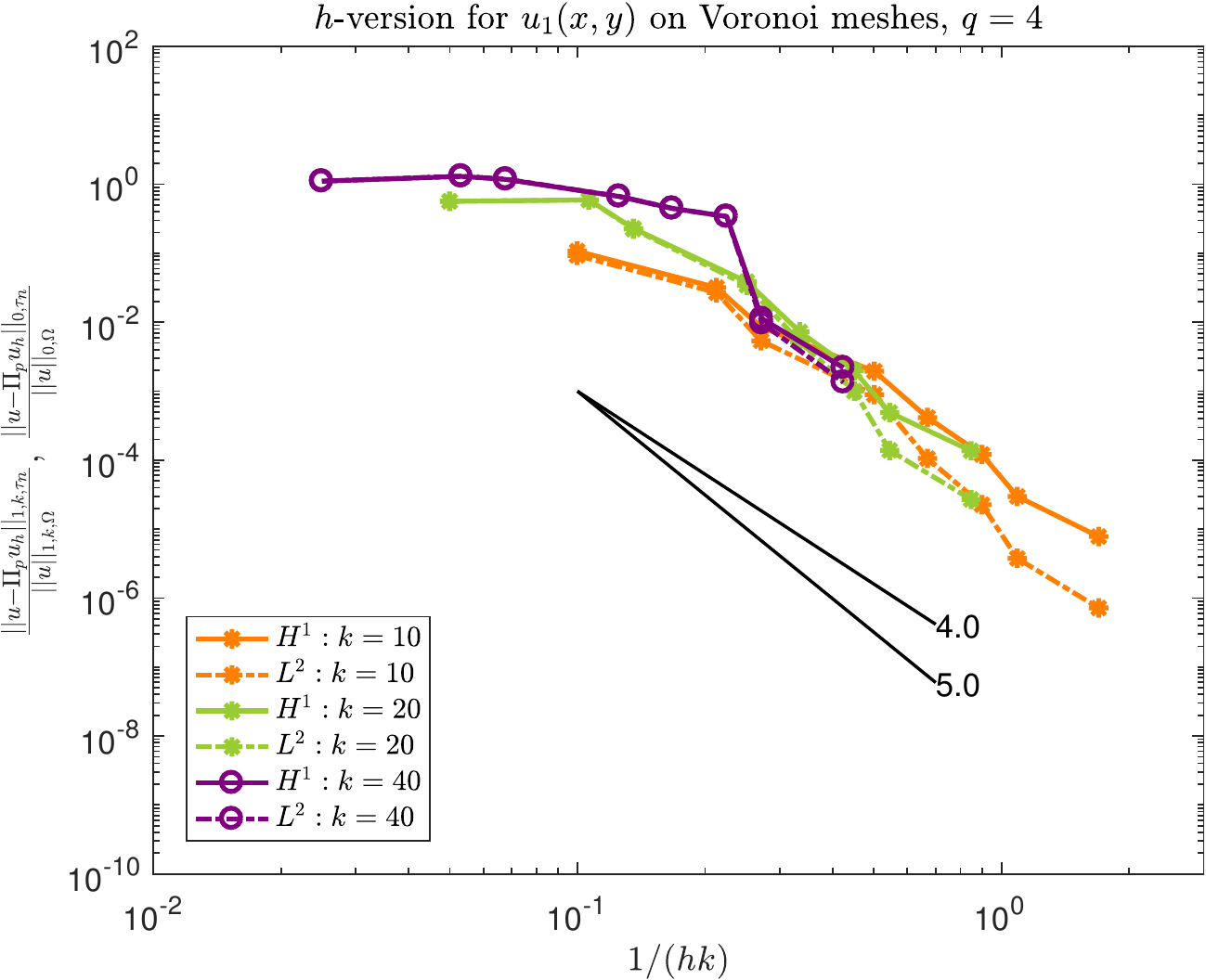}
\end{minipage}
\hfill
\begin{minipage}{0.48\textwidth}
\includegraphics[width=\textwidth]{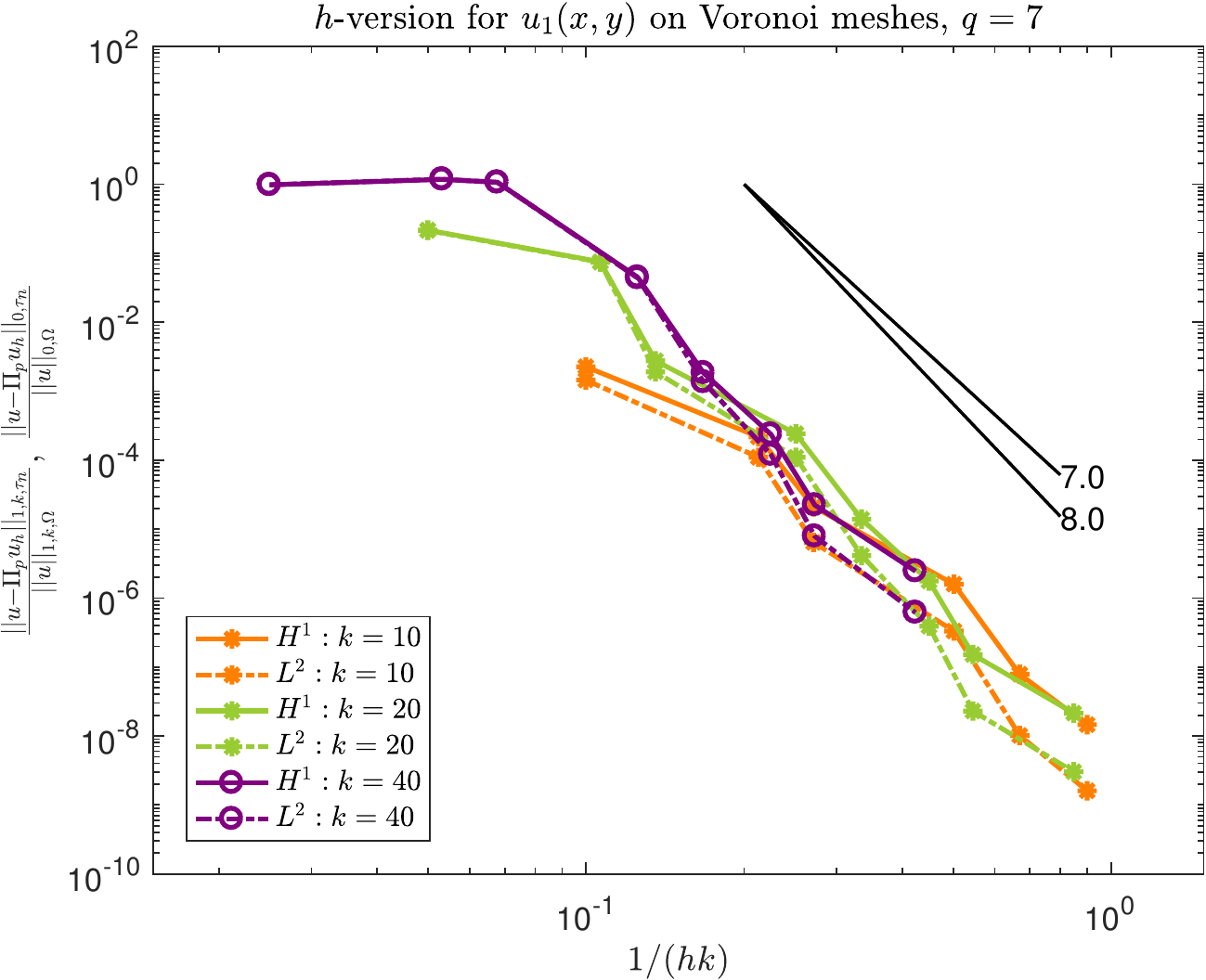}
\end{minipage}
\end{center}
\caption{$h$-version of the modified method for $u_1$ in \eqref{exact solutions u0 u1} with $\k=10$, $20$, and~$40$, and $q=4$ (\textit{left}) and $7$ (\textit{right}), with the sets of directions $\{\dir^{(0)}_\ell\}_{\ell=1}^p$ as in \eqref{pw directions}
and the modified D-recipe stabilization \eqref{modified D-recipe} on Voronoi meshes.}
\label{fig:TEST4_D_voro} 
\end{figure}

\begin{center}
\begin{tabular}{|c||c||c|c||c|c|}
\hline 
$h$	& $\Nd$ & rel. $H^1$ error & rel. $L^2$ error & $\Nd$ orig. & red. ($\%$) \\ 
\hline \hline 
1.001e+00 & 131 & 2.1704e-01 &  2.1440e-01 & 182 & 28.02 \\
4.697e-01 & 224 & 7.5289e-02 &  7.4015e-02 & 359 & 37.60 \\
3.688e-01 & 394 & 2.7605e-03 &  1.9061e-03 & 713 & 44.74 \\
1.993e-01 & 695 & 2.4147e-04 &  1.0970e-04 & 1477 & 52.95 \\
1.493e-01 & 1243& 1.3955e-05 &  4.1303e-06 & 2960 & 58.01 \\
1.111e-01 & 2206& 1.7662e-06 &  3.9013e-07 & 5998 & 63.22 \\
9.171e-02 & 4002& 1.5165e-07 &  2.3002e-08 & 12092 & 66.90 \\
5.896e-02 & 7282& 2.1462e-08 &  3.0271e-09 & 24304 & 70.04 \\
\hline 
\end{tabular} 
\captionof{table}{Relative errors for $u_1$ in \eqref{exact solutions u0 u1} with $k=20$, $q=7$, and the directions $\{\dir^{(0)}_\ell\}_{\ell=1}^p$ as in \eqref{pw directions} on Voronoi meshes employing the modified method with the modified D-recipe stabilization \eqref{modified D-recipe}.}
\label{tab:TEST4_voro_D}
\end{center}

We observe from Figures \ref{fig:TEST4_D_cart} and \ref{fig:TEST4_D_voro}, and Tables \ref{tab:TEST4_D_cart} and \ref{tab:TEST4_voro_D} that the approximate relative $H^1$ and $L^2$ discretization errors in~\eqref{rel_errors} of the method
approximately converge with rate $4$ and $5$ for $q=4$, and $7$ and $8$ for $q=7$, respectively.
This is in agreement with the error estimate derived in \cite{ncTVEM_theory}, which established, for $h \to 0$ and analytic solutions, convergence rates of order $q$ and $q+1$, for the relative $H^1$ and $L^2$ errors, respectively.
Note that due to the fact that the Voronoi meshes are not nested, the slopes indicating the convergence order are not as straight as in the Cartesian case.

In addition, we notice that the number of degrees of freedom was reduced significantly by making use of the orthonormalization process described in Algorithm \ref{algorithm orthog process}
in comparison to the original version of the method, which employs the standard filtering process in Algorithm~\ref{algorithm filtering process}. 

\medskip
Next, we employ the \textit{identity stabilization} \eqref{identity stabilization} and compare the performance with the modified D-recipe stabilization for~$u_1$ using the same meshes and parameters as above.
The results for the relative $H^1$ errors in~\eqref{rel_errors} are shown in Table~\ref{tab:comp_D_recipe_identity}. 
\begin{center}
\begin{minipage}{0.475\textwidth} 
\begin{tabular}{|c|c|c|c|}
\hline 
\multicolumn{4}{|c|}{Cartesian} \\
\hline
$h$	& $\Nd$ & D-recipe & identity   \\ 
\hline \hline 
1.414e+00 & 46 	  & 4.6885e-01  & 4.8651e-01 \\ 
7.071e-01 & 120   & 1.3527e-01  & 2.0525e-01 \\	
3.535e-01 & 340   & 1.0540e-03  & 2.4615e-02 \\	
1.767e-01 & 1008  & 6.1594e-06  & 1.7224e-03 \\	
8.838e-02 & 3264  & 4.2394e-08  & 1.2786e-05 \\	
4.419e-02 & 10560 & 1.6544e-07  & 6.4752e-07 \\
\hline 
\end{tabular} 
\end{minipage}
\hfill
\begin{minipage}{0.475\textwidth}
\begin{tabular}{|c|c|c|c|}
\hline 
\multicolumn{4}{|c|}{Voronoi} \\
\hline 
$h$	& $\Nd$ & D-recipe & identity   \\ 
\hline \hline 
1.001e+00 & 131  & 2.1704e-01 & 2.3510e-01 \\ 
4.697e-01 & 224  & 7.5289e-02 & 9.3167e-02 \\	
3.688e-01 & 394  & 2.7605e-03 & 2.4375e-02 \\	
1.993e-01 & 695  & 2.4147e-04 & 8.5729e-03 \\	
1.493e-01 & 1243 & 1.3955e-05 & 2.4687e-03 \\	
1.111e-01 & 2206 & 1.7662e-06 & 6.0640e-04 \\
\hline 
\end{tabular}
\end{minipage}
\captionof{table}{Relative $H^1$ errors for $u_1$ in \eqref{exact solutions u0 u1} with $k=20$, $q=7$, and the directions $\{\dir^{(0)}_\ell\}_{\ell=1}^p$ as in \eqref{pw directions} on Cartesian (\textit{left}) and Voronoi (\textit{right}) meshes employing the modified method
with the D-recipe stabilization~\eqref{modified D-recipe} and the identity stabilization~\eqref{identity stabilization}.}
\label{tab:comp_D_recipe_identity}
\end{center}
Compared to the modified D-recipe stabilization, the method based on the identity stabilization behaves worse. Similar results are obtained for the relative $L^2$ errors in \eqref{rel_errors}.
This fact highlights that picking a ``good'' stabilization is an important issue in the design of VEM~\cite{VEM3Dbasic,fetishVEM,fetishVEM3D}.

Thus, in the sequel, we will always consider the modified nonconforming Trefftz-VEM endowed with the modified D-recipe stabilization \eqref{modified D-recipe}.

\medskip
As a last test in this section, we study the $h$-version of the method for the non-analytic solution $u_3$ in \eqref{exact solutions u2 u3}. Once again we perform the tests on the Cartesian meshes with $\k=10$, $20$, and~$40$, and $\q=4$ and $7$, in Figure~\ref{fig:TEST5_D_cart}.
We point out that similar results were obtained employing Voronoi meshes.

\begin{figure}[h]
\begin{center}
\begin{minipage}{0.48\textwidth} 
\includegraphics[width=\textwidth]{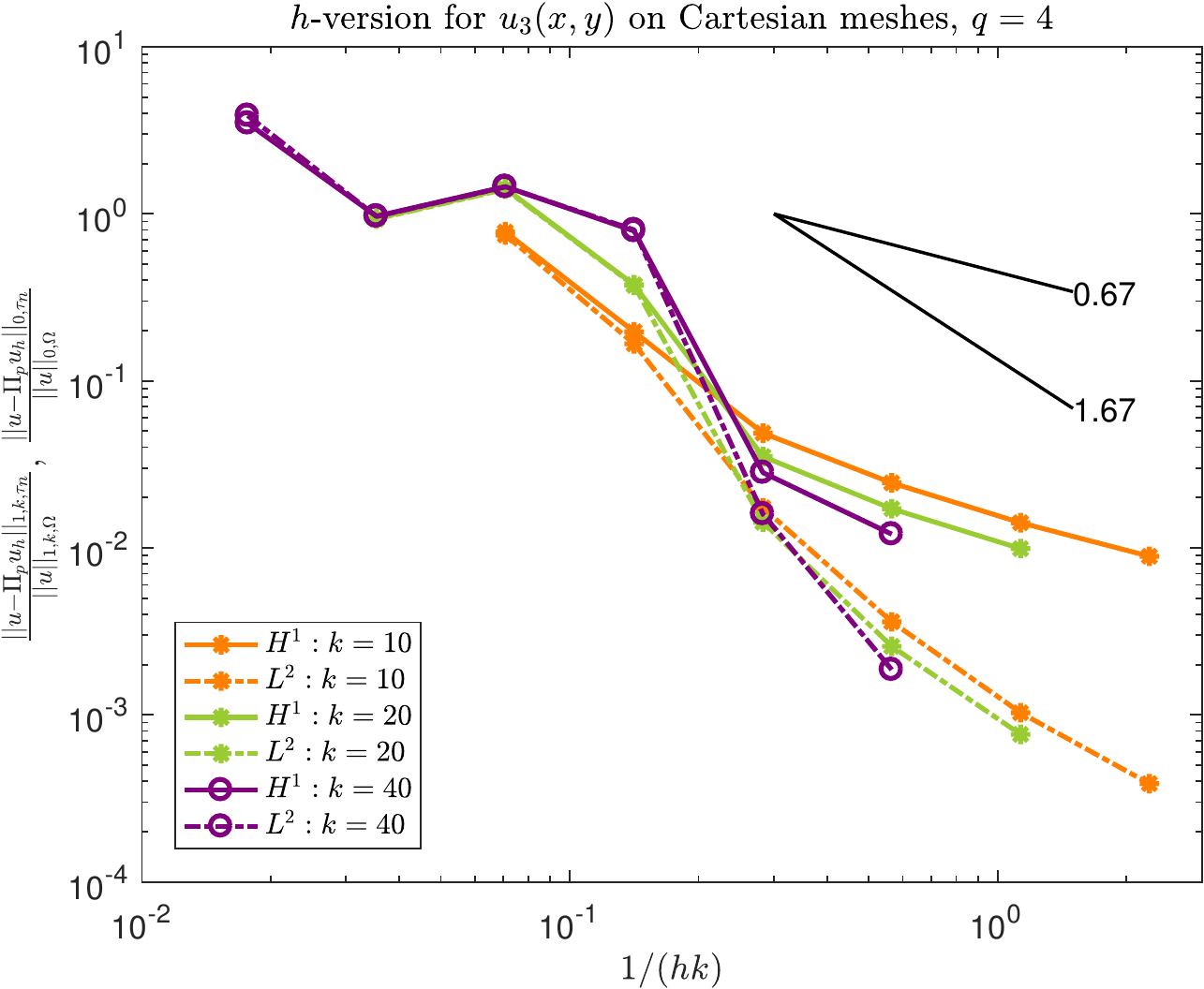}
\end{minipage}
\hfill
\begin{minipage}{0.48\textwidth}
\includegraphics[width=\textwidth]{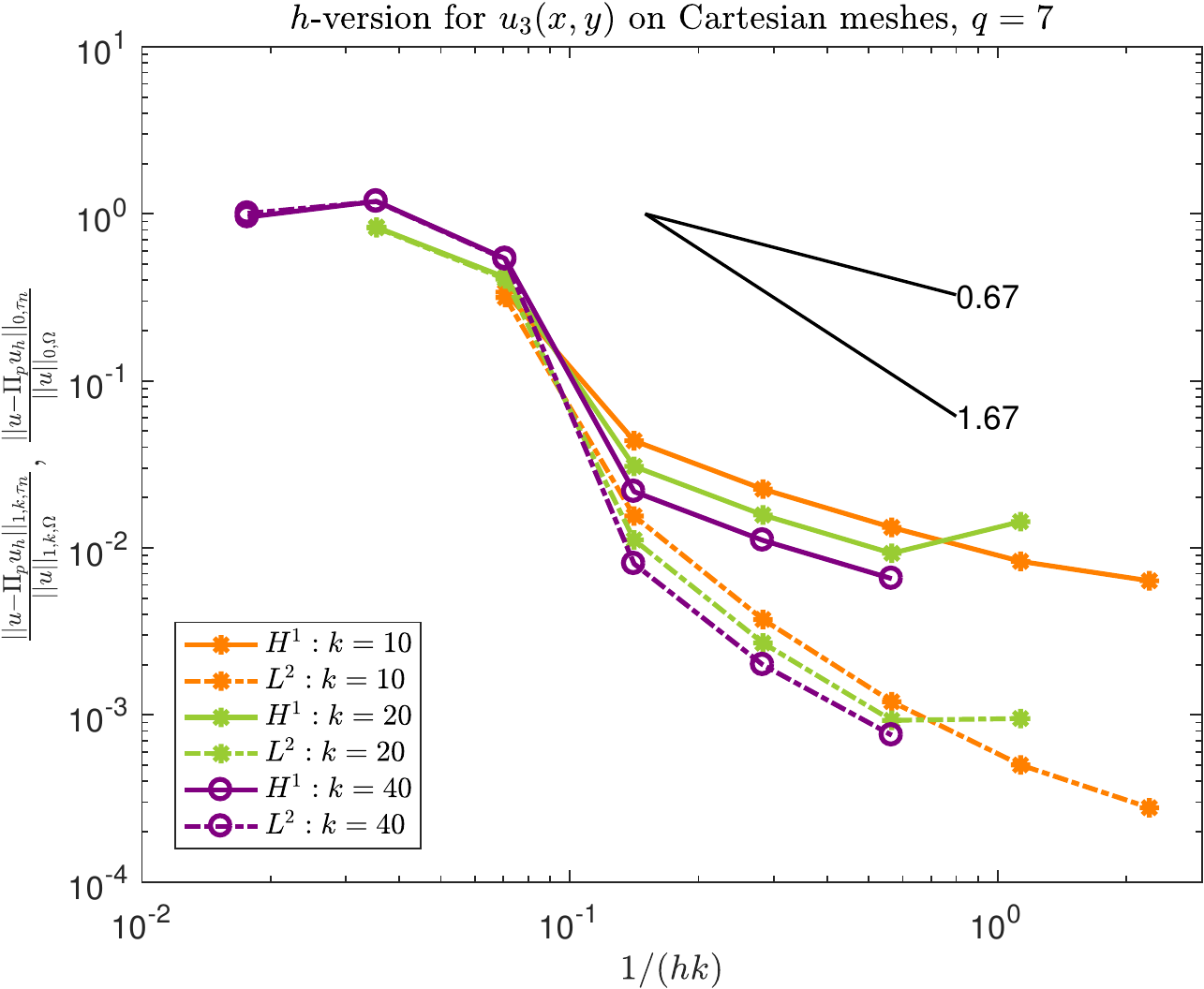}
\end{minipage}
\end{center}
\caption{$h$-version of the method for $u_3$ in \eqref{exact solutions u0 u1} with $\k=10$, $20$, and~$40$, and $q=4$ (\textit{left}) and $7$ (\textit{right}), with the sets of directions $\{\dir^{(0)}_\ell\}_{\ell=1}^p$ as in \eqref{pw directions} and the modified D-recipe stabilization \eqref{modified D-recipe} on Cartesian meshes.}
\label{fig:TEST5_D_cart} 
\end{figure}

The observed convergence rate for the approximate $H^1$ bulk error in~\eqref{rel_errors} is~$\frac{2}{3}$ and that for the approximate $L^2$ bulk error is~$\frac{5}{3}$.
This corresponds to the expected convergence rates $\min\{s,q\}$ and $\min\{s,q\}+1$ for the $H^1$ and $L^2$ errors, respectively, where $s$ is the regularity of the solution and $q$ is the effective plane wave degree, see~\cite{ncTVEM_theory}.

\begin{remark} \label{remark on sigma}
Here, we discuss and motivate the choice for the parameter~$\sigma$ in Algorithm~\ref{algorithm orthog process}, which so far has been set to~$10^{-13}$.
In principle, it would have been more natural to take $\sigma=10 \, \eps$, where eps denotes the machine epsilon.
With this choice, it would be basically guaranteed that the span of the filtered orthonormalized edge plane wave functions coincides with the non-orthonormalized edge plane wave space, up to a negligible difference.
However, 
we could observe from numerical experiments that with smaller choices of $\sigma$, such as $10^{-13}$, it is possible to achieve the same accuracy as when employing $\sigma=10 \,\eps$, but with less degrees of freedom, see Table \ref{tab:comparison sigma},
where we tested the $h$-version of the modified nonconforming Trefftz-VEM with analytical solution $u_2$ in \eqref{exact solutions u2 u3} on a sequence of Voronoi-Llyod meshes of the type in Figure~\ref{fig:meshes} (right) for the two above-mentioned choices of $\sigma$ with $k=10$ and $q=7$.

\begin{center}
\begin{tabular}{|c||c|c||c|c|}
\hline 
& \multicolumn{2}{|c||}{$\sigma=10 \, \eps$} & \multicolumn{2}{|c|}{$\sigma=10^{-13}$}  \\
\hline
$h$	& $\Nd$ & rel. $L^2$ error  & $\Nd$ & rel. $L^2$ error  \\ 
\hline \hline 
1.001346e+00 	& 113 	& 6.174135e-03  &  106 	& 6.147714e-03   \\
4.697545e-01 	& 201 	& 4.285982e-04  &  189 	& 4.337061e-04   \\
3.688297e-01 	& 353 	& 6.529610e-05  &  327 	& 6.250524e-05   \\
1.993180e-01 	& 631 	& 6.754430e-06  &  578 	& 6.625276e-06   \\
1.493758e-01 	& 1139 	& 1.572124e-07  &  1037 & 1.512503e-07   \\
1.111597e-01 	& 2053 	& 6.369678e-08  &  1886 & 6.294611e-08   \\
9.171171e-02 	& 3745 	& 2.514794e-08  &  3445 & 2.441118e-08   \\
\hline 
\end{tabular} 
\captionof{table}{$h$-version of the modified method for the analytical solution $u_2$ in \eqref{exact solutions u2 u3}, $k=10$, $q=7$, on Voronoi-Lloyd meshes of the type in Figure~\ref{fig:meshes} with different choices of $\sigma$ in Algorithm~\ref{algorithm orthog process}. The relative $L^2$ errors are computed accordingly with \eqref{rel_errors}.}
\label{tab:comparison sigma}
\end{center}
\end{remark}

\paragraph{Application to an acoustic scattering problem.} \label{subsection scattering}
In this section, we consider the scattering of acoustic waves at a scatterer $\Omega_{Sc} \subset \R^2$ with polygonal boundary $\Gamma_{Sc}$.
We study the cases of a sound-soft and sound-hard scatterers. The total field $u=u^S+u^I$, $u^S$ and $u^I$ denoting the scattered and the incident fields, respectively, satisfies
\begin{equation*} 
(i)\left\{
\begin{alignedat}{2}
-\Delta u -\k^2 u &= 0 	&&\quad \text{in } \Omega_{Sc}^c \\
u &= 0 &&\quad \text{on } \Gamma_{Sc},
\end{alignedat}
\hspace{2cm}
\right.
(ii) \left\{
\begin{alignedat}{2}
-\Delta u -\k^2 u &= 0 	&&\quad \text{in } \Omega_{Sc}^c\\
\nabla u \cdot \nOmega &= 0 &&\quad \text{on } \Gamma_{Sc},
\end{alignedat}
\right.
\end{equation*}
respectively, where $\Omega_{Sc}^c:=\R^2\backslash \overline{\Omega_{Sc}}$, and both problems are endowed with the Sommerfeld radiation condition at infinity:
\begin{equation} \label{Sommerfeld}
\lim_{|\x| \to \infty} |\x| \left( \frac{\partial u^S(\x)}{\partial |\x|} +\im k u^S(\x) \right) = 0,
\end{equation}
see~\cite[Sect. 2.1]{coltoninverse}.

By truncating the unbounded domain $\Omega_{Sc}^c$ and approximating the Sommerfeld radiation condition~\eqref{Sommerfeld} by a first order absorbing impedance condition on the artificial boundary, one obtains
\begin{equation} \label{scattering_problems}
(iii) \left\{
\begin{alignedat}{2}
-\Delta u -\k^2 u &= 0 	&&\quad \text{in } \Omega\\
u &= 0 &&\quad \text{on } \Gamma_{Sc}\\
\nabla u \cdot \nOmega + \im \k  u  &= \gR &&\quad \text{on } \GammaR,\\
\end{alignedat}
\hspace{2cm}
\right.
(iv) \left\{
\begin{alignedat}{2}
-\Delta u -\k^2 u &= 0 	&&\quad \text{in } \Omega\\
\nabla u \cdot \nOmega &= 0 &&\quad \text{on } \Gamma_{Sc}\\
\nabla u \cdot \nOmega + \im \k u  &= \gR &&\quad \text{on } \GammaR,\\
\end{alignedat}
\right.
\end{equation}
where $\Omega:=\Omega_R \backslash \overline{\Omega_{Sc}}$, with $\Omega_R$ denoting the truncated domain with boundary $\GammaR$, and $\gR=\nabla u^I \cdot \n_\Omega+\im\k \theta u^I$ is the impedance trace of the incoming wave.
Both problems $(iii)$ and $(iv)$ in \eqref{scattering_problems} are well-posed, according to Theorem \ref{thm well-posedness HH}. Note that in the context of acoustic scattering, the unknown function $u$ in \eqref{scattering_problems} represents the acoustic pressure, rather than the displacement.

For the numerical tests, we fix $\Omega=(-1,2) \times (0,3) \, \backslash \, [0,1] \times [1,2]$ and employ uniform Cartesian meshes, see Figure \ref{figure scatterer}.

\begin{figure}[h]
\begin{center}
\begin{minipage}{0.3\textwidth}
\includegraphics[width=\textwidth]{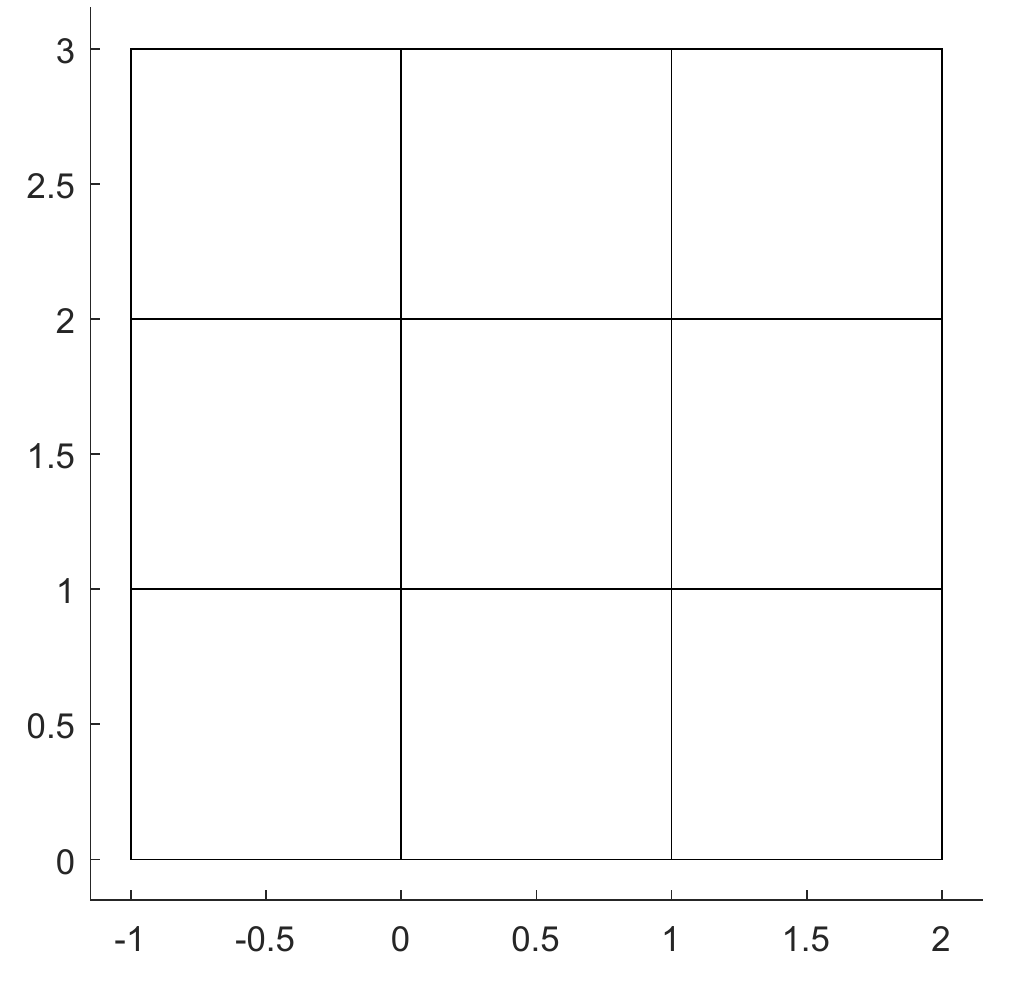}
\end{minipage}
\hfill
\begin{minipage}{0.3\textwidth}
\includegraphics[width=\textwidth]{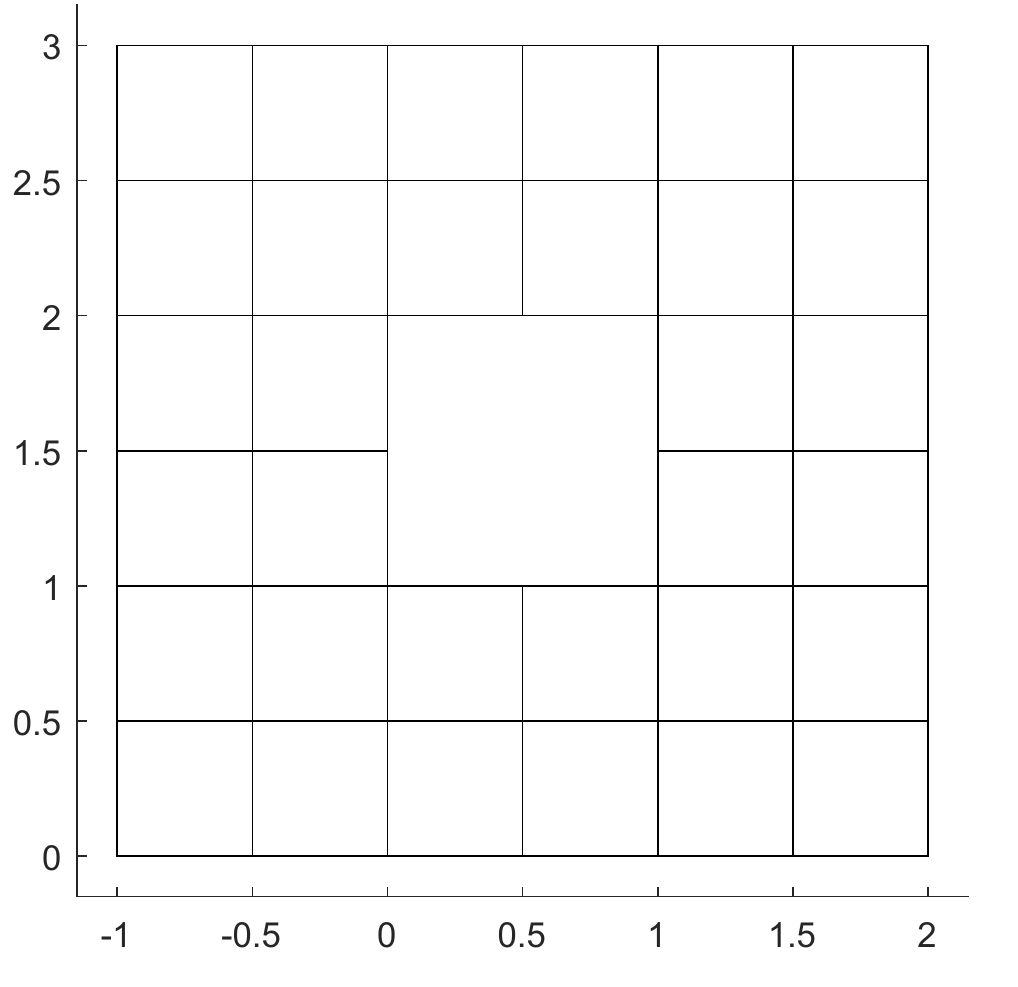}
\end{minipage}
\hfill
\begin{minipage}{0.3\textwidth}
\includegraphics[width=\textwidth]{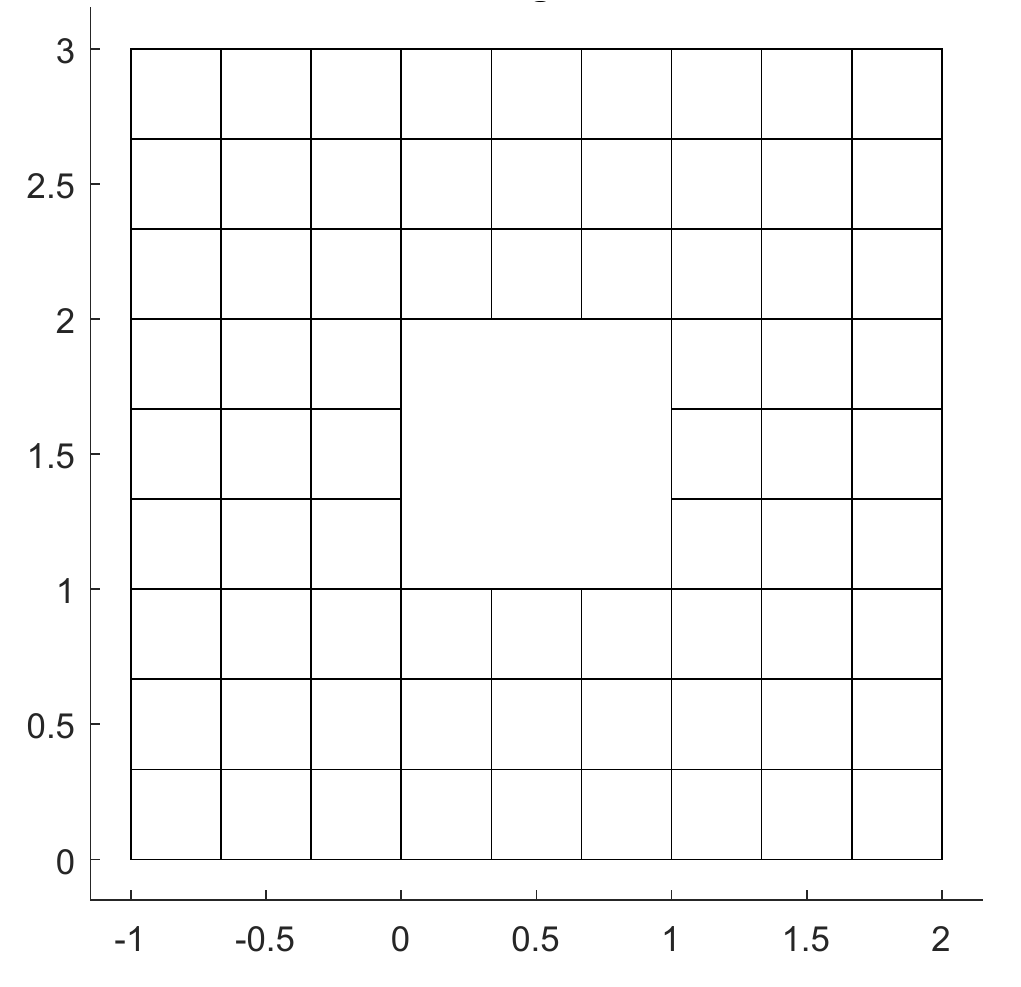}
\end{minipage}
\end{center}
\caption{First three Cartesian meshes in the decomposition over the domain $\Omega=(-1,2) \times (0,3) \, \backslash \, [0,1] \times [1,2]$.}
\label{figure scatterer}
\end{figure}

As incident fields, we consider the plane wave functions $u_0$ and $u_1$ in \eqref{exact solutions u0 u1}, as well as the plane wave given by
\begin{equation} \label{exact solutions u4}
u_4(x,y):=\exp\left(\im\k\left(\cos\left(\frac{2\pi}{17}\right)x+\sin\left(\frac{2\pi}{17}\right)y\right)\right).
\end{equation}
In Figures \ref{fig:scattering_sound_soft} and \ref{fig:scattering_sound_hard}, the real parts of the computed total fields for the sound-hard and sound-soft cases, respectively, are plotted for the different incident fields with $k=15$. As effective plane wave degree we choose $q=10$ (namely $p=21$ bulk plane waves). 

\begin{figure}[H]
\begin{minipage}{0.32\textwidth} 
\includegraphics[width=\textwidth]{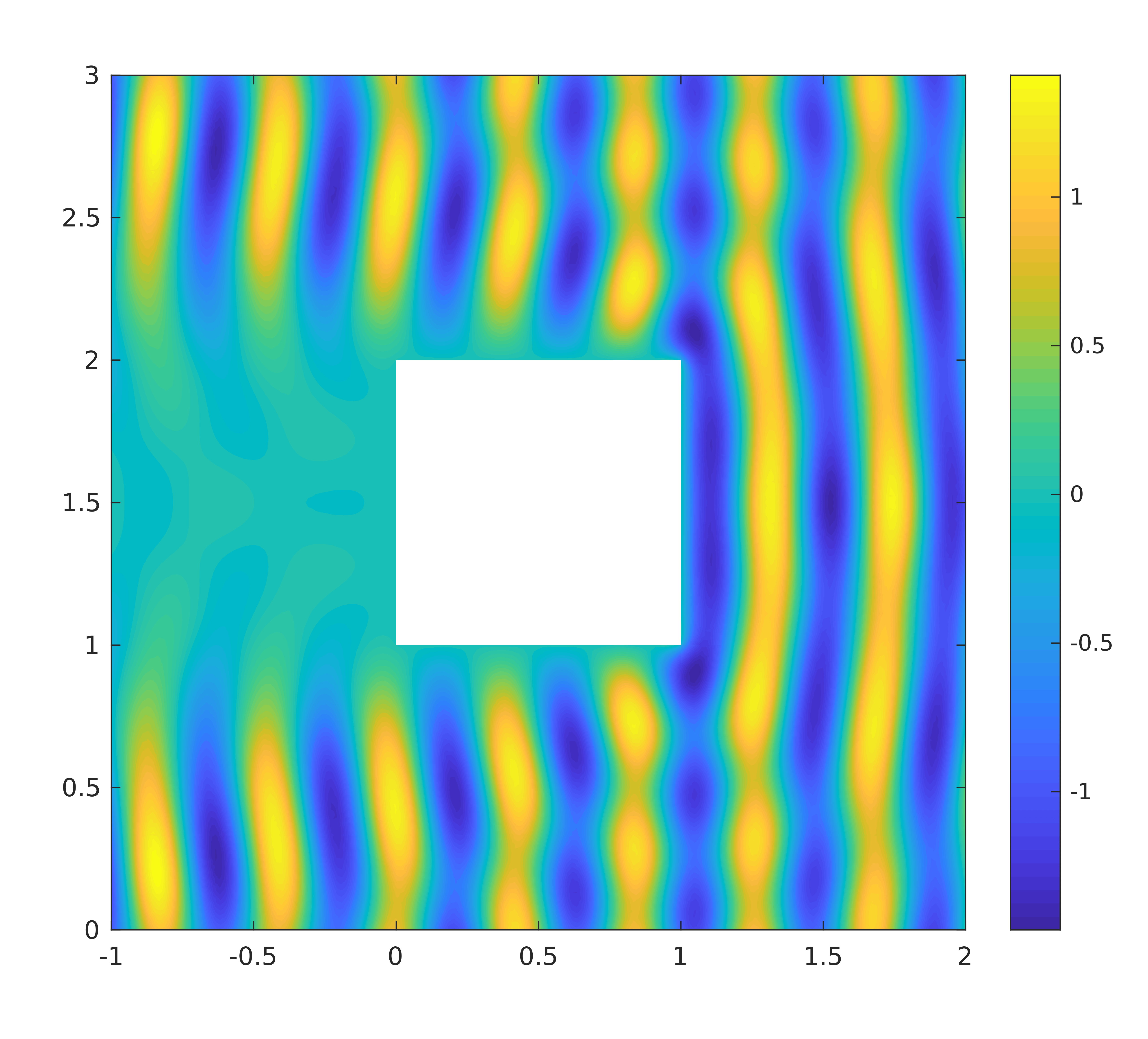}
\end{minipage}
\hfill
\begin{minipage}{0.32\textwidth}
\includegraphics[width=\textwidth]{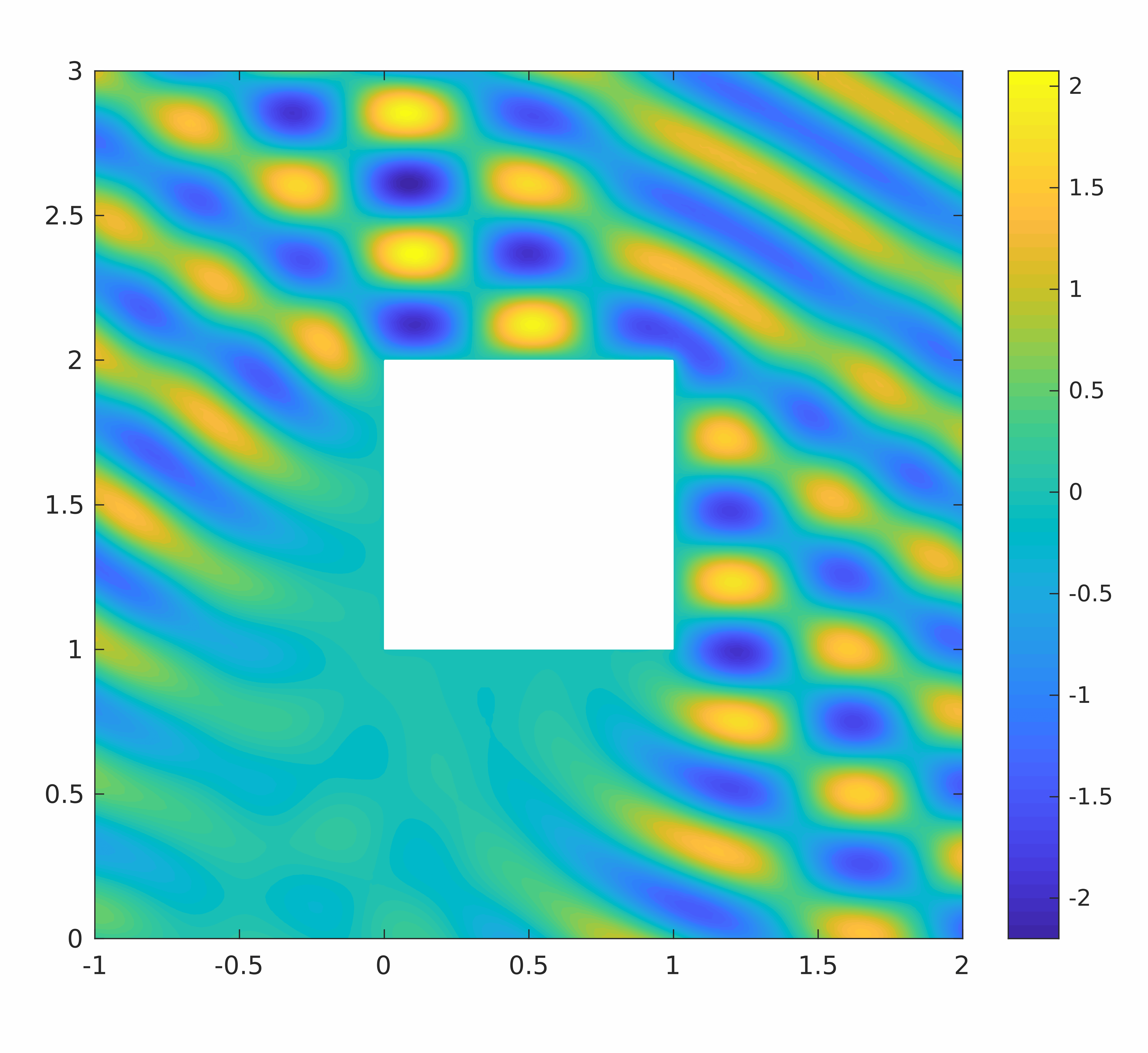}
\end{minipage}
\hfill
\begin{minipage}{0.32\textwidth}
\includegraphics[width=\textwidth]{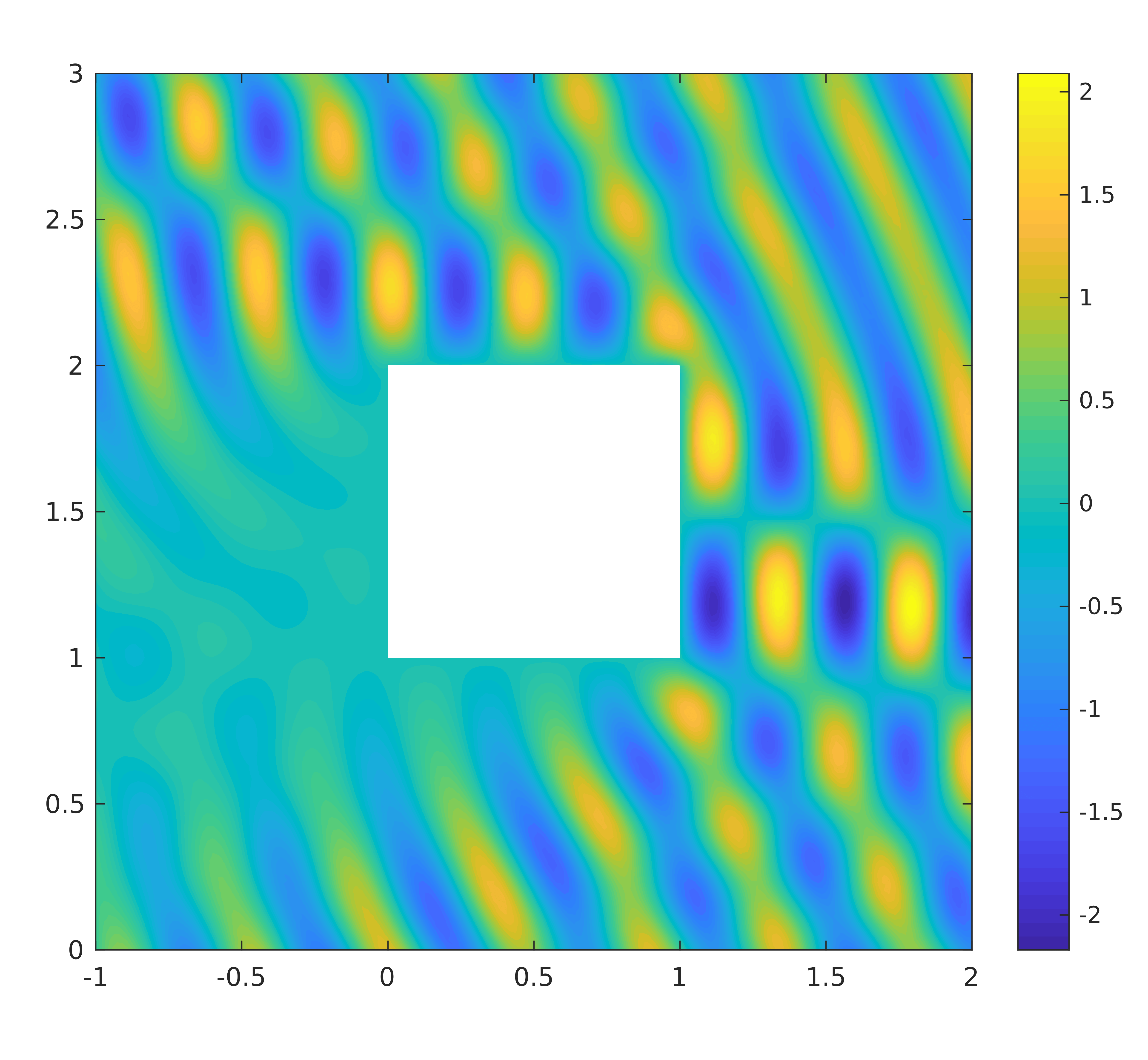}
\end{minipage}
\caption{Real parts of the total fields for the sound-soft scattering employing as incident field the plane waves given by $u_0$ (\textit{left}) and $u_1$ (\textit{center}) in \eqref{exact solutions u0 u1}, and $u_4$ (\textit{right}) in \eqref{exact solutions u4}, with $k=15$.}
\label{fig:scattering_sound_soft} 
\end{figure}

\begin{figure}[H]
\begin{minipage}{0.32\textwidth} 
\includegraphics[width=\textwidth]{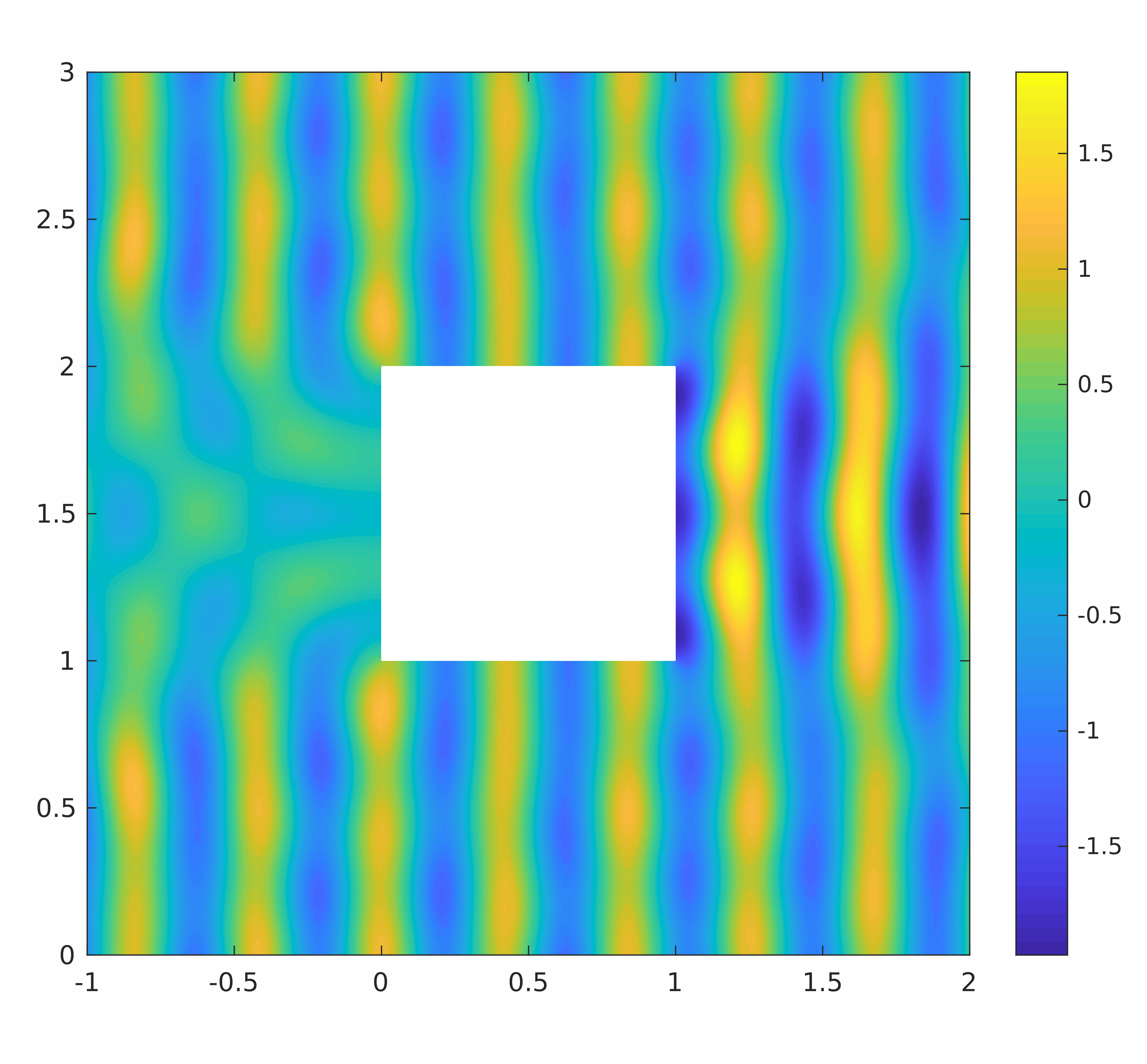}
\end{minipage}
\hfill
\begin{minipage}{0.32\textwidth}
\includegraphics[width=\textwidth]{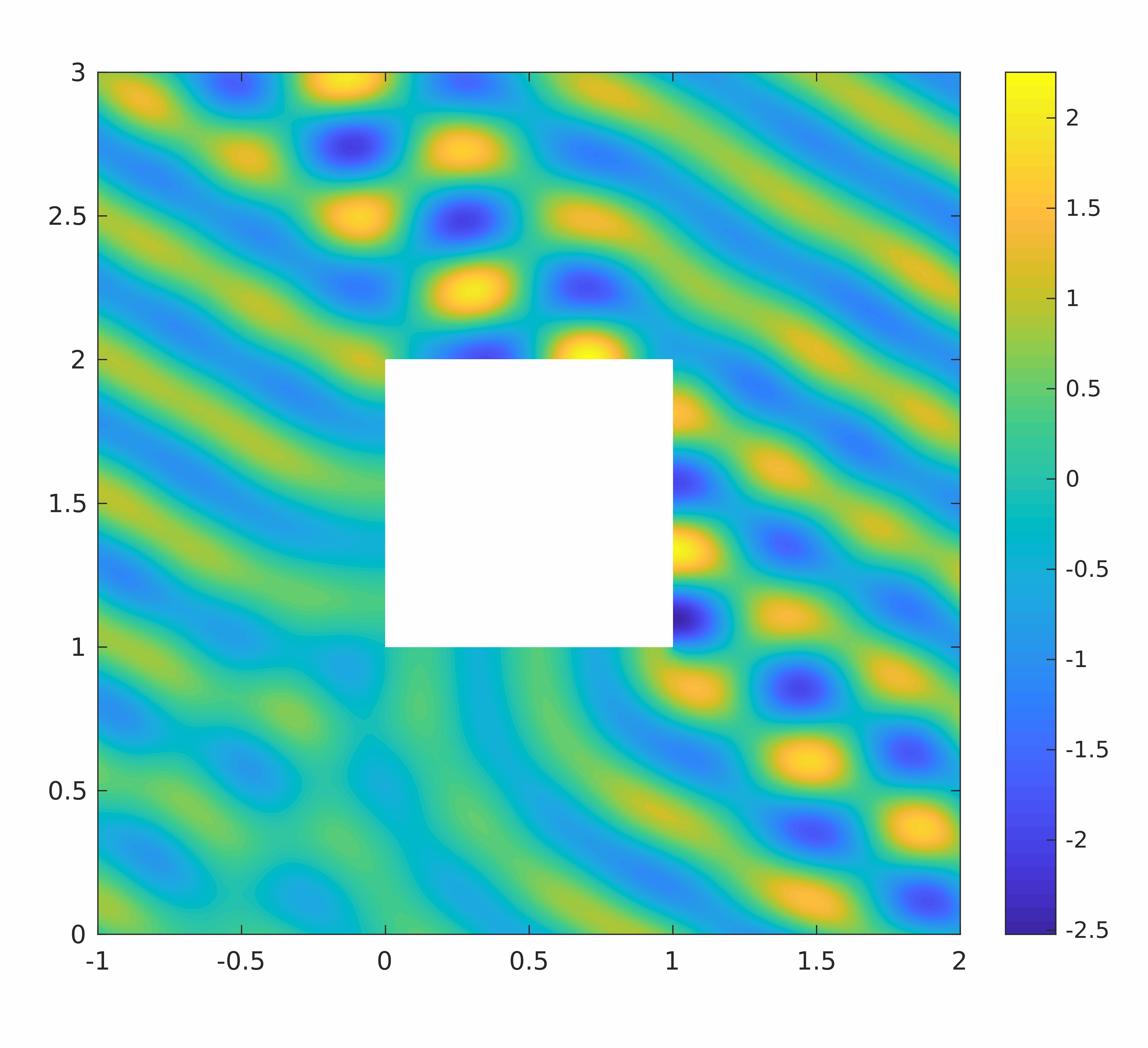}
\end{minipage}
\hfill
\begin{minipage}{0.32\textwidth}
\includegraphics[width=\textwidth]{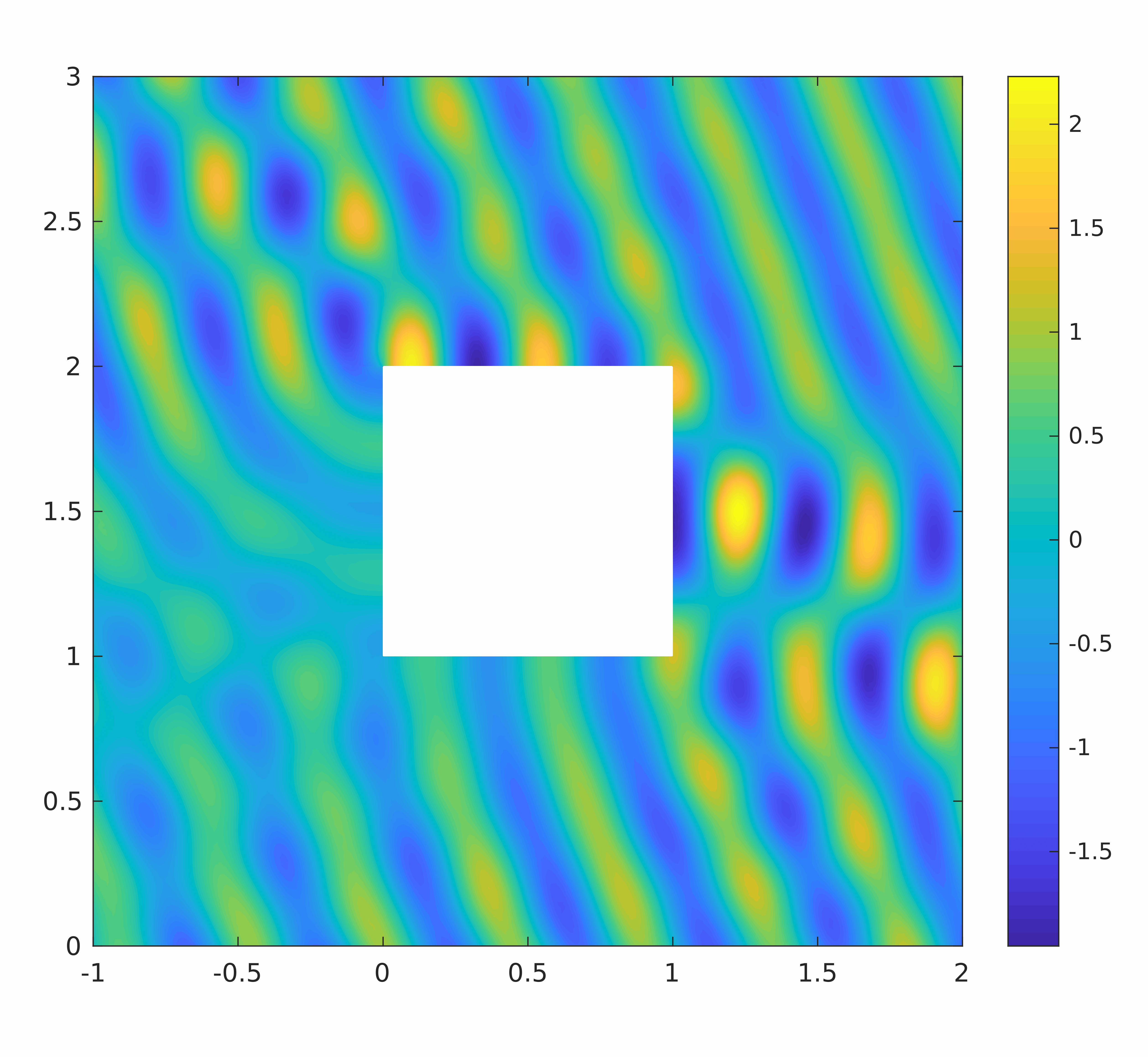}
\end{minipage}
\caption{Real parts of the total fields for the sound-hard scattering employing as incident field the plane waves given by $u_0$ (\textit{left}) and $u_1$ (\textit{center}) in \eqref{exact solutions u0 u1}, and $u_4$ (\textit{right}) in \eqref{exact solutions u4}, with $k=15$.}
\label{fig:scattering_sound_hard} 
\end{figure}

The relative errors are computed accordingly with~\eqref{rel_errors}, where, since an exact solution~$u$ is not known in closed form, $u$ was chosen to be the discrete solution on a very fine mesh. In Figure~\ref{fig:scattering_h}, the obtained results are plotted. 
\begin{figure}[h]
\begin{minipage}{0.475\textwidth} 
\includegraphics[width=\textwidth]{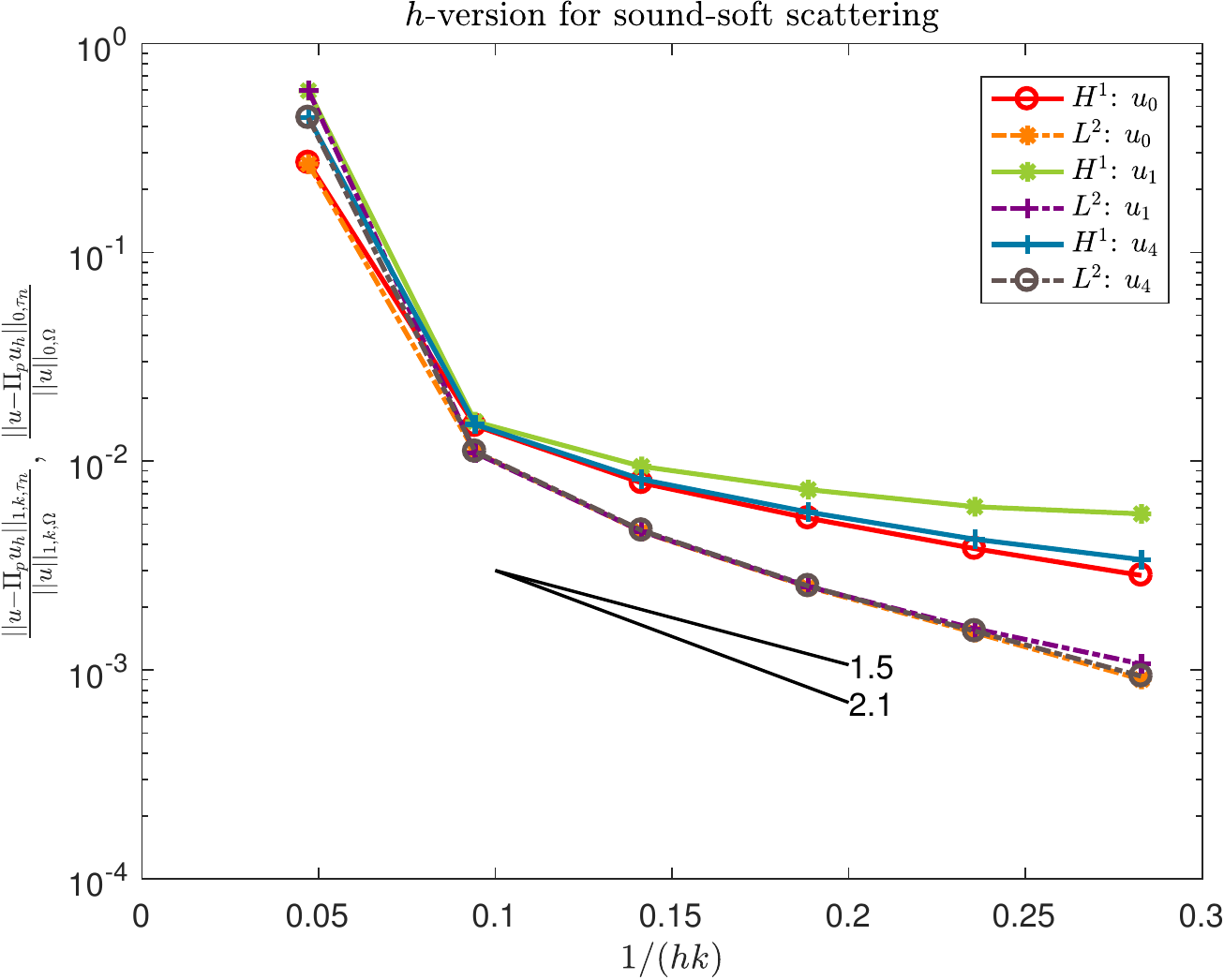}
\end{minipage}
\hfill
\begin{minipage}{0.475\textwidth}
\includegraphics[width=\textwidth]{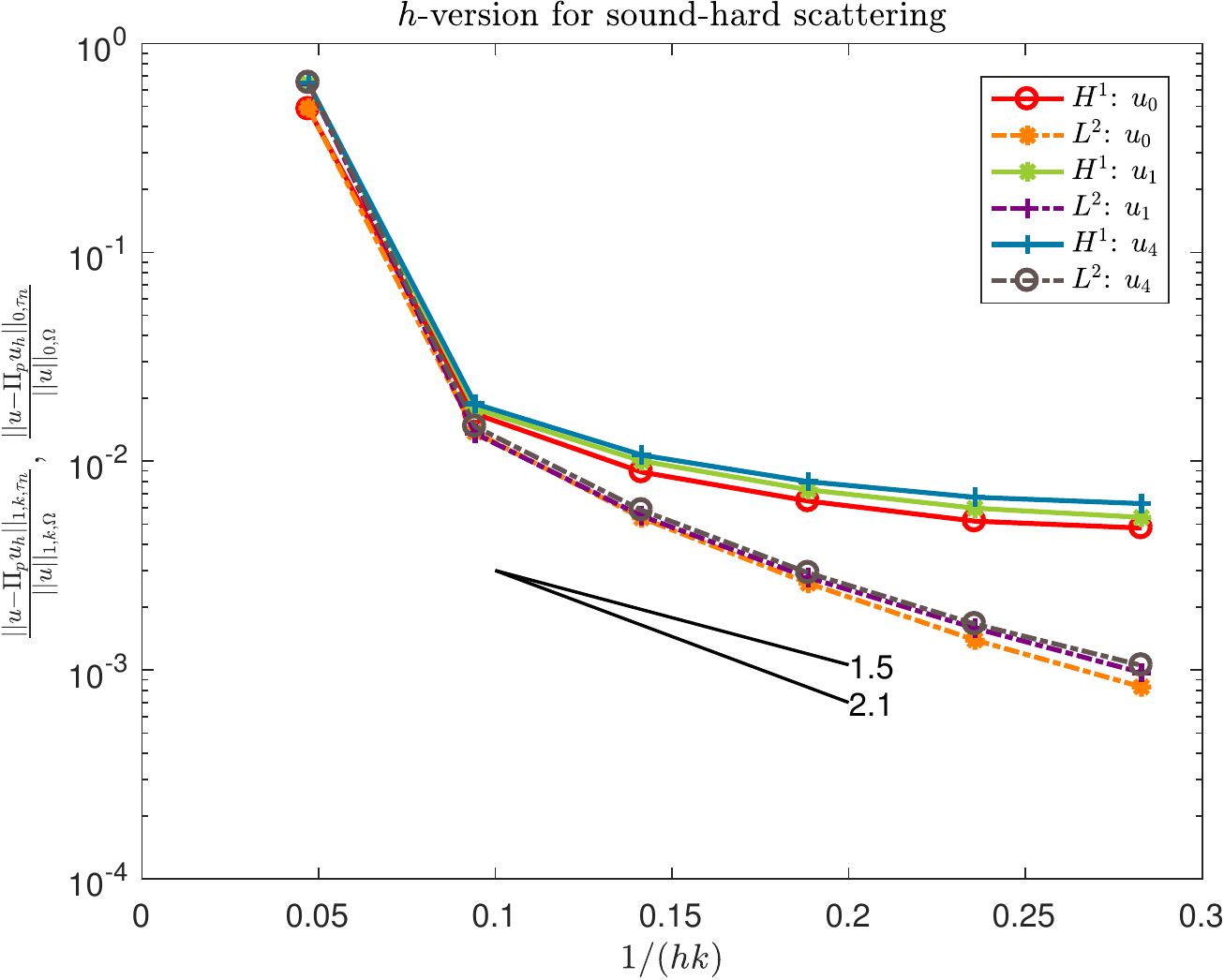}
\end{minipage}
\caption{$h$-version of the modified method for the scattering problems~\eqref{scattering_problems} with $k=15$ and $q=10$. \textit{Left:} sound-soft scattering; \textit{right:} sound-hard scattering. The relative errors are computed accordingly with \eqref{rel_errors}.}
\label{fig:scattering_h} 
\end{figure}
In both cases, the convergence rates are approximately~$1.5$ and~$2.1$ for the relative~$H^1$ and~$L^2$ errors, respectively. 

\subsubsection{$\p$-version}
We test numerically the $\p$-version of the modified nonconforming Trefftz-VEM, that is, we achieve convergence by keeping fixed a mesh and increasing the local effective degree.
To this end, we consider the two meshes shown in Figure \ref{fig:meshes p}. Each of them consists of eight elements. The first one is a Voronoi-Lloyd mesh, and the second is a mesh whose elements are not star-shaped with respect to any ball.
In the sequel, we will refer to these meshes as mesh (a) and mesh (b), respectively.

\begin{figure}[h]
\begin{minipage}{0.49\textwidth}
\centering
\includegraphics[width=0.7\textwidth]{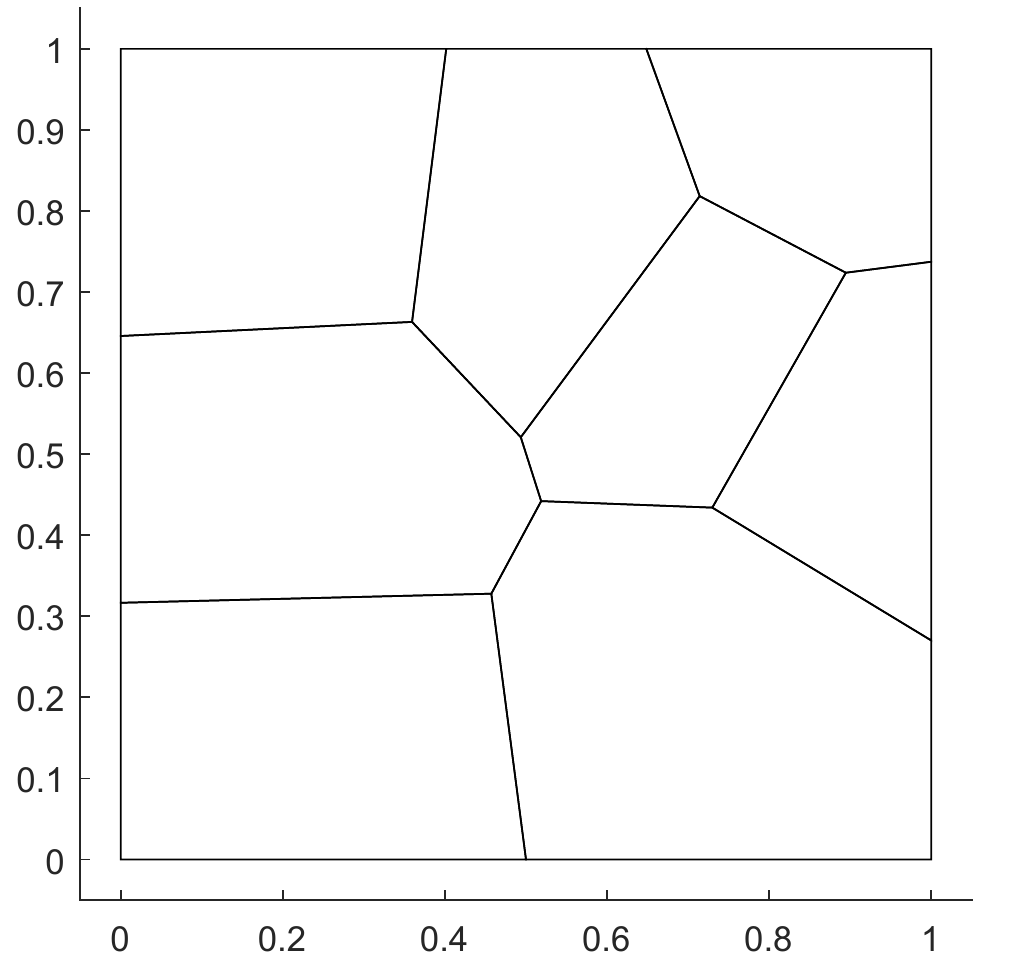}
\end{minipage}
\hfill
\begin{minipage}{0.49\textwidth}
\centering
\includegraphics[width=0.7\textwidth]{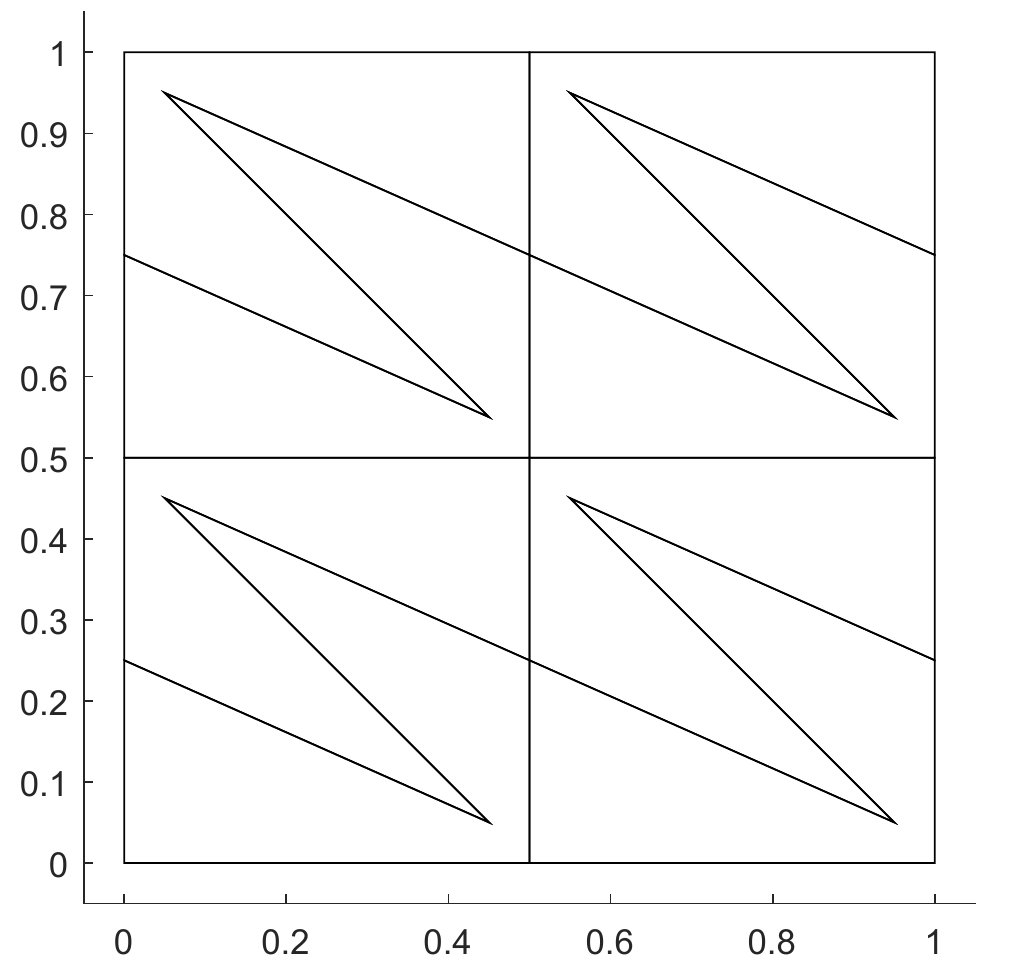}
\end{minipage}
\caption{Different types of meshes made of eight elements; \textit{left}: mesh (a), \textit{right}: mesh (b).}
\label{fig:meshes p} 
\end{figure}

To start with, we first investigate the $\p$-version of the modified method for the test case with analytical solution $u_1$ in \eqref{exact solutions u0 u1}, employing different values of $\k=10$, $20$, and~$40$. The obtained numerical results are shown in Figure \ref{fig:TEST6}.

Also in this case, the tests with analytical solution $u_2$ in \eqref{exact solutions u2 u3} lead to similar results and are postponed to the forthcoming Section~\ref{section comp Trefftz PWVEM}, where the results are compared with the PWVEM and the PWDG.

\begin{figure}[h]
\begin{minipage}{0.48\textwidth}
\includegraphics[width=\textwidth]{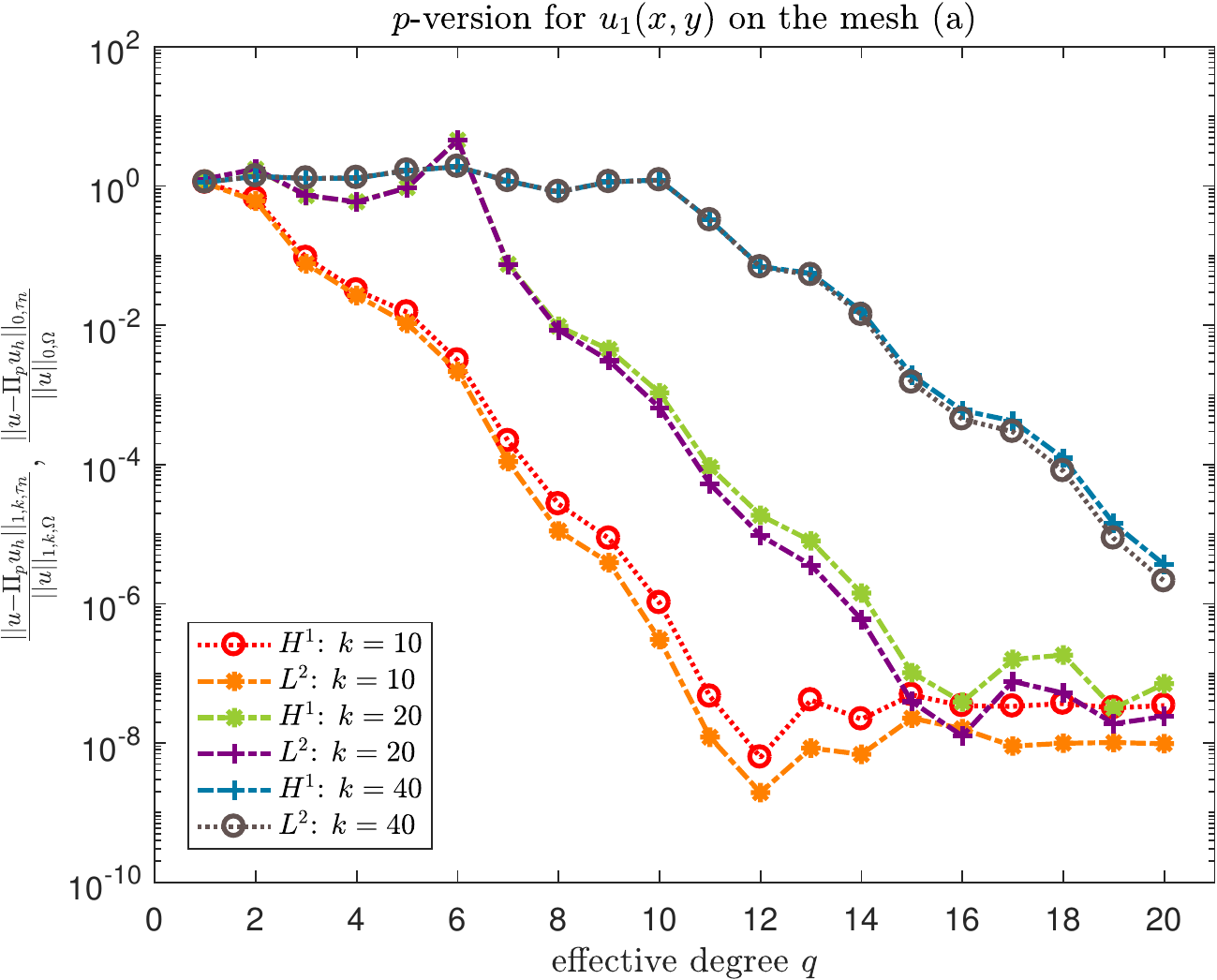}
\end{minipage}
\hspace{0.25cm}
\begin{minipage}{0.48\textwidth}
\includegraphics[width=\textwidth]{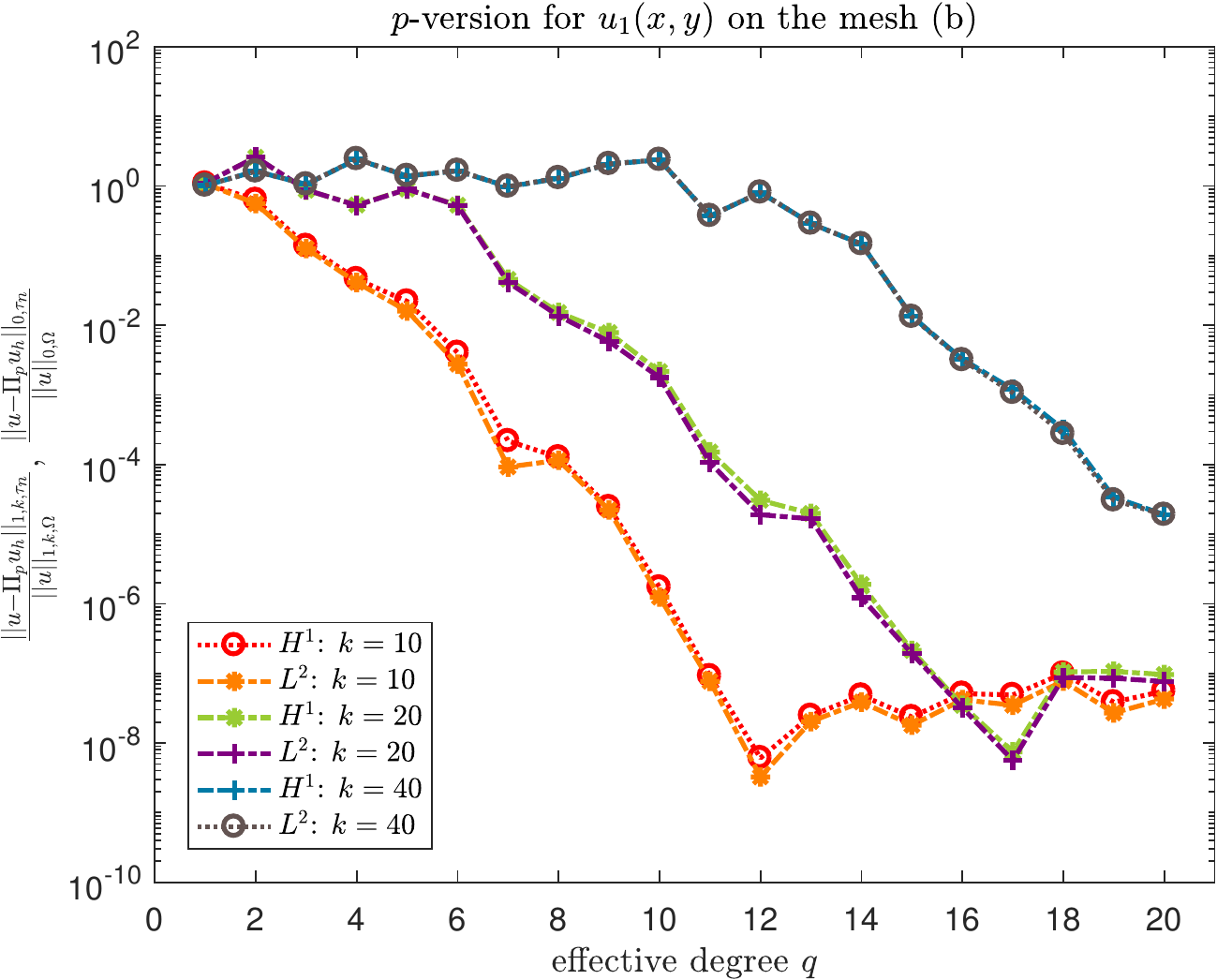}
\end{minipage}
\caption{$p$-version of the modified method for $u_1$ in \eqref{exact solutions u0 u1} on mesh (a) and (b) in Figure \ref{fig:meshes p}, from \textit{left} to \textit{right}.}
\label{fig:TEST6} 
\end{figure}

For both meshes, we observe that after a pre-asymptotic regime, the modified method is able to reach exponential convergence in terms of the effective degree $\q$, before instability takes place, caused by the haunting ill-conditioning of the plane wave basis.
The pre-asymptotic regime is much wider for higher wave numbers, which is typical of plane wave-based methods.
We underline that, despite the $p$-version of the nonconforming Trefftz-VEM has not been investigated theoretically yet, the exponential decay of the error for analytic solutions is not surprising, cf. \cite{SchwabpandhpFEM,hpVEMbasic,hiptmair2011plane}.

Next, we perform the same experiments on the non-analytic exact solution~$u_3$ in~\eqref{exact solutions u2 u3}. The corresponding plots are depicted in Figure~\ref{fig:TEST7}.
We notice that the convergence rate is not exponential any more, but rather algebraic. This is also an expected behavior of the $p$-version \cite{SchwabpandhpFEM,hpVEMbasic,hiptmair2011plane}.

\begin{figure}[h]
\begin{minipage}{0.48\textwidth}
\includegraphics[width=\textwidth]{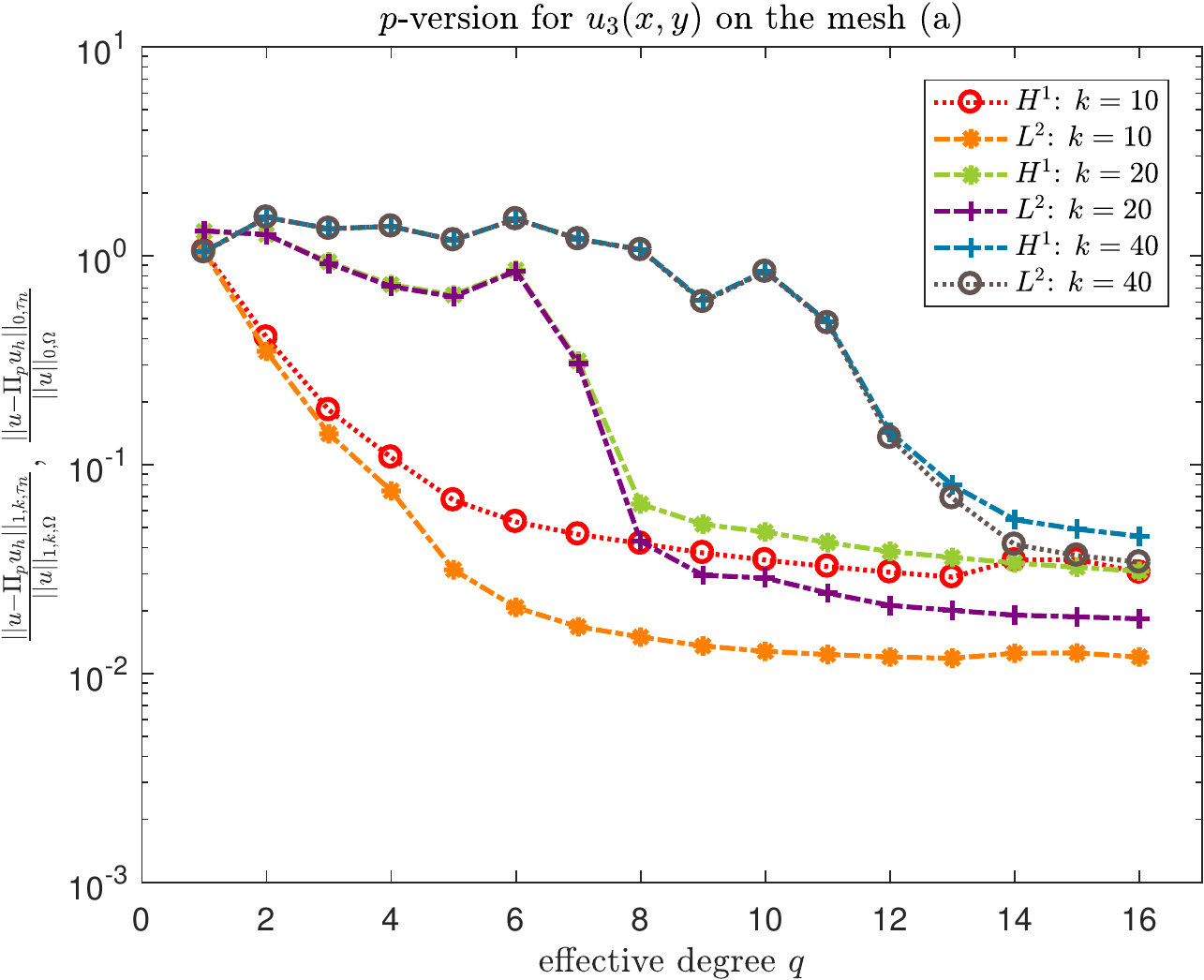}
\end{minipage}
\hspace{0.25cm}
\begin{minipage}{0.48\textwidth}
\includegraphics[width=\textwidth]{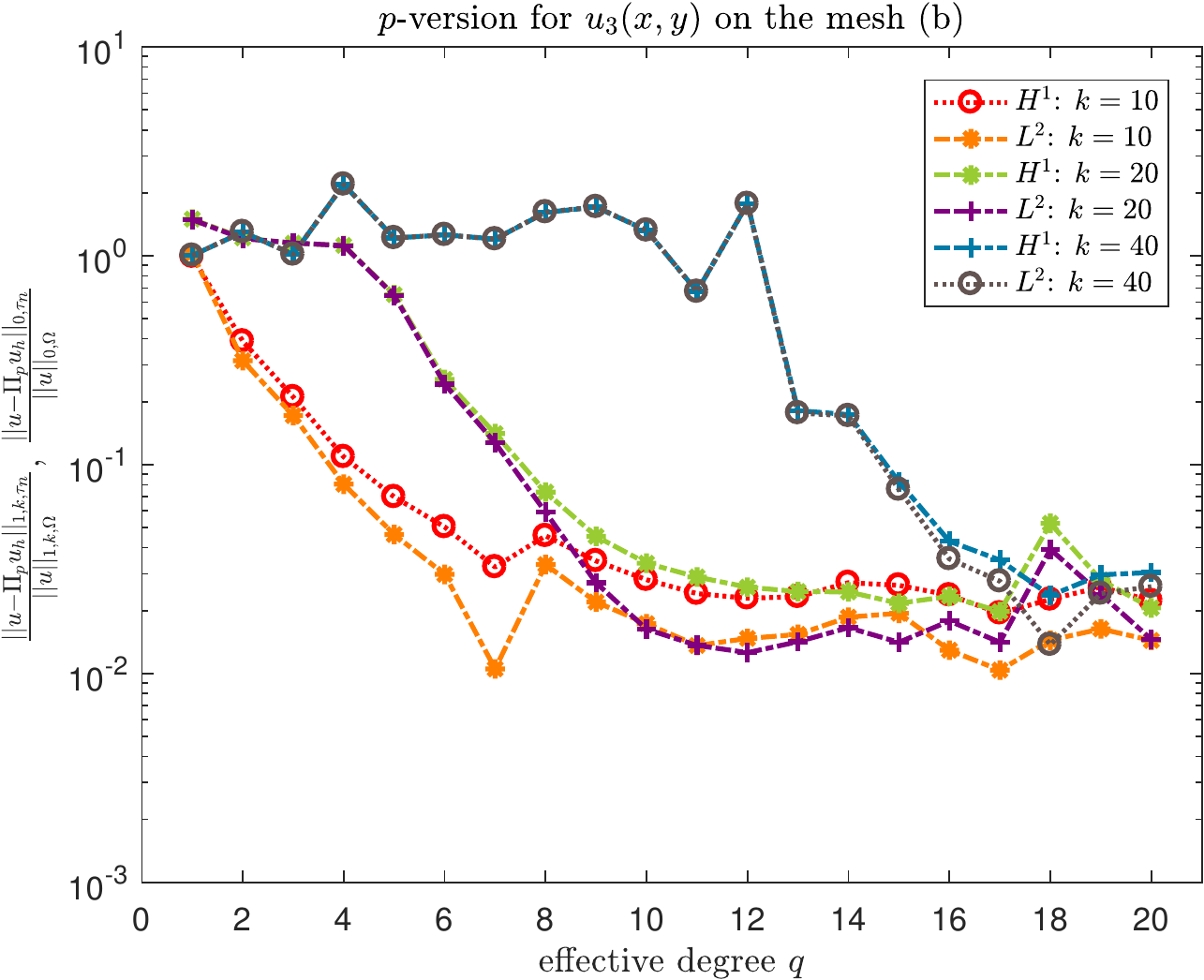}
\end{minipage}
\caption{$p$-version of the modified method for $u_3$ in \eqref{exact solutions u2 u3} on mesh (a) and (b) in Figure \ref{fig:meshes p}, from left to right.}
\label{fig:TEST7} 
\end{figure}

\subsubsection{$\h\p$-version} \label{subsubsection hp version}
We numerically investigate the $\h\p$-version of the modified nonconforming Trefftz-VEM.

By combining an $h$-refinement near solution singularities with a $p$-refinement in the elements where the solution is sufficiently smooth, exponential convergence of the errors in terms of a proper root of the number of degrees of freedom is expected.
In the framework of Trefftz methods for the Helmholtz equation, a full $\h\p$-analysis was investigated for the PWDG method~\cite{PWdG_hpversion}, where exponential convergence in terms of the square root of the number of degrees of freedom was proven.

Here, we build approximation spaces with variable number of plane wave directions element by element following the $\h\p$ approach for the Poisson problem introduced in~\cite{hpVEMcorner}.
To this end, also taking into account that one has to impose continuity elementwise in the nonconforming sense~\eqref{non conforming Sobolev}, we proceed as follows.

Let us assume that we aim at approximating the solution $u_3$ defined in \eqref{exact solutions u2 u3} on the square domain $\Omega=(0,1)^2$; such function has a singularity at $\boldsymbol{\nu} =(0,0.5)$.
We build a sequence of nested meshes that are refined towards $\boldsymbol{\nu}$. More precisely, we set $\tau_0=\{\Omega\}$. Next, for $n\in \mathbb N$,
the mesh $\taun$ is a polygonal mesh consisting of $n+1$ layers, where the $0$-th layer $L_{0,n}$ is the set of polygons abutting the singularity $\boldsymbol{\nu}$, whereas the $\ell$-th layer is defined by induction as
\[
L_{\ell,n} = \{ \E \in \taun \, : \, \overline \E \cap \overline{\E_{\ell-1}} \ne \emptyset \text{ for some } \E_{\ell-1} \in L_{\ell-1,n},\, \E \not \subset \cup_{j=0}^{\ell-1} L_j   \}.
\]
In order to achieve exponential convergence, one typically needs graded meshes towards $\boldsymbol{\nu}$. Hence, we require that the mesh size function $\hE$, for all elements $\E \in \taun$, satisfies
\begin{equation} \label{refinement cond}
\hE \approx
\begin{cases}
\mu^n & \text{if } \E \in L_{0,n},\\
\frac{1-\mu}{\mu} \text{dist} (\E,\boldsymbol{\nu}) & \text{otherwise},\\
\end{cases}
\end{equation}
where $\mu \in (0,1)$ is referred to as the \textit{grading parameter}. Moreover, we increase the dimension of the local spaces as follows.
We associate with each $\E \in \taun$ a number $\qE$, defined as
\begin{equation} \label{index polygon}
\qE = \ell+1 \quad \text{if } \E \in L_{\ell,n},\, \ell = 0,\dots,n-1,
\end{equation}
and we build the local spaces $\VE$ in \eqref{local Trefftz-VE space} by using Dirichlet/impedance traces that are edgewise in $\PWctilde(\e)$, where the space $\PWctilde(\e)$ is defined as follows.

Given $\q_{max,n} = \max_{\E \in \taun} \qE$, we first consider the set of $p_{max,n}:=2\q_{max,n} + 1$ equidistributed directions $\{\dtilde_{\ell,n}\}_{\ell=1}^{p_{max,n}}$.
On each element $\E$, we pick a set of $2\qE+1$ directions obtained by removing $2(\q_{max,n}- \qE)$ selected directions from the original set. Thus, elements abutting the singularity will have a small number of directions, which then increases linearly with the index of the layer.

In order to select such directions to be removed, we order the set $\{\dtilde_{\ell,n}\}_{\ell=1}^{p_{max,n}}$ by picking first the directions with increasing odd indices and next those with even ones, see Figure~\ref{figure renumbering directions}.
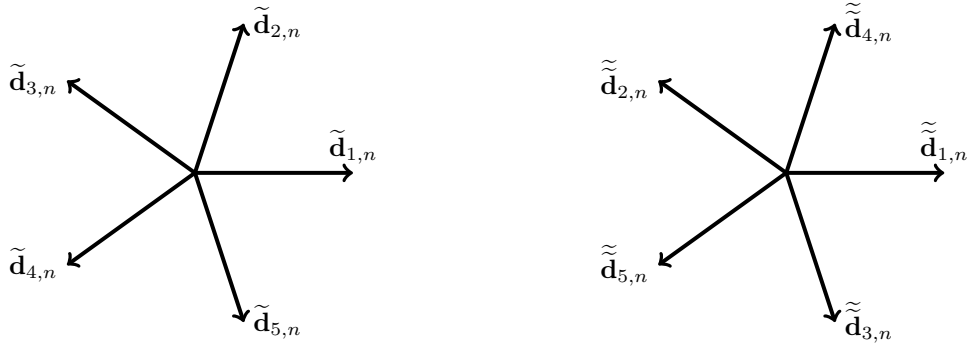
\begin{figure}[h]
\centering
\begin{minipage}{0.30\textwidth}
\begin{center}
\begin{tikzpicture}[scale=1.6]
\draw[->,line width=1.5pt] (9.5,0) -- ({9.5+1.3*cos(0)},{1.3*sin(0)});
\coordinate[label=above:$\dtilde_{1,n}$] (d1) at ({9.5+1.3*cos(0)},{1.3*sin(0)});

\draw[->,line width=1.5pt] (9.5,0) -- ({9.5+1.3*cos(72)},{1.3*sin(72)});
\coordinate[label=right:$\dtilde_{2,n}$] (d2) at ({9.5+1.3*cos(72)},{1.3*sin(72)});

\draw[->,line width=1.5pt] (9.5,0) -- ({9.5+1.3*cos(144)},{1.3*sin(144)});
\coordinate[label=left:$\dtilde_{3,n}$] (d3) at ({9.5+1.3*cos(144)},{1.3*sin(144)});

\draw[->,line width=1.5pt] (9.5,0) -- ({9.5+1.3*cos(-144)},{1.3*sin(-144)});
\coordinate[label=left:$\dtilde_{4,n}$] (d4) at  ({9.5+1.3*cos(-144)},{1.3*sin(-144)});

\draw[->,line width=1.5pt] (9.5,0) -- ({9.5+1.3*cos(-72)},{1.3*sin(-72)});
\coordinate[label=right:$\dtilde_{5,n}$] (d5) at ({9.5+1.3*cos(-72)},{1.3*sin(-72)});
\end{tikzpicture}
\end{center}
\end{minipage}
\quad\quad\quad\quad\quad\quad\quad\quad\quad
\begin{minipage}{0.30\textwidth}
\begin{center}
\begin{tikzpicture}[scale=1.6]
\draw[->,line width=1.5pt] (9.5,0) -- ({9.5+1.3*cos(0)},{1.3*sin(0)});
\coordinate[label=above:$\widetilde{\dtilde}_{1,n}$] (d1) at ({9.5+1.3*cos(0)},{1.3*sin(0)});

\draw[->,line width=1.5pt] (9.5,0) -- ({9.5+1.3*cos(72)},{1.3*sin(72)});
\coordinate[label=right:$\widetilde{\dtilde}_{4,n}$] (d4) at ({9.5+1.3*cos(72)},{1.3*sin(72)});

\draw[->,line width=1.5pt] (9.5,0) -- ({9.5+1.3*cos(144)},{1.3*sin(144)});
\coordinate[label=left:$\widetilde{\dtilde}_{2,n}$] (d2) at ({9.5+1.3*cos(144)},{1.3*sin(144)});

\draw[->,line width=1.5pt] (9.5,0) -- ({9.5+1.3*cos(-144)},{1.3*sin(-144)});
\coordinate[label=left:$\widetilde{\dtilde}_{5,n}$] (d5) at  ({9.5+1.3*cos(-144)},{1.3*sin(-144)});

\draw[->,line width=1.5pt] (9.5,0) -- ({9.5+1.3*cos(-72)},{1.3*sin(-72)});
\coordinate[label=right:$\widetilde{\dtilde}_{3,n}$] (d3) at ({9.5+1.3*cos(-72)},{1.3*sin(-72)});
\end{tikzpicture}
\end{center}
\end{minipage}
\caption{\textit{Left}: equidistributed set of directions $\{\dtilde_{\ell,n}\}_{\ell=1}^{p_{max,n}}$. \textit{Right}: reordered set of directions $\{\dtildetilde_{\ell,n}\}_{\ell=1}^{p_{max,n}}$.
Firstly, one considers the directions with odd index and next those with even index.} \label{figure renumbering directions}
\end{figure}
At this point, given the reordered set of directions $\{\dtildetilde_{\ell,n}\}_{\ell=1}^{p_{max,n}}$, we remove the $2(\q_{max,n} - \qE)$ directions having the largest indices.
This procedure allows to build elementwise nested sets of directions with different cardinality.

\medskip
We are now able to define nested spaces over each edge $\e$ of the mesh skeleton by fixing spaces of plane waves whose number of basis elements is given by the maximum of the local numbers $\qE$ in \eqref{index polygon} of the neighbouring elements:
\[
\PWctilde(\e) :=
\begin{cases}
\text{span} \left\{ e^{\im \k \dtildetilde_\ell (\x-\xe)}{}_{|_\e} \, : \, \ell=1,\dots, 2\max(\q_{\E_1},\q_{\E_2})+1   \right\} &\quad \text{if } \e \subset \EnI,\, \e \subseteq \partial \E_1 \cap \partial \E_2\\
\text{span} \left\{ e^{\im \k \dtildetilde_\ell (\x-\xe)}{}_{|_\e} \, : \, \ell=1,\dots, 2\qE+1   \right\} &\quad \text{if } \e \subset \Enb,\, \e \subseteq \partial \E,\\
\end{cases}
\]
where $K_1$ and $K_2$, and $K$, denote the elements abutting edge $\e$, if $e$ is an interior edge and a boundary edge, respectively. 
This resembles the so-called \emph{maximum rule} employed in $\h\p$-VEM \cite{hpVEMcorner}. 

A sequence of meshes satisfying the geometric refinement condition \eqref{refinement cond} towards $\boldsymbol{\nu}$, along with the distribution of effective degrees accordingly with \eqref{index polygon}, is depicted in Figure \ref{figure graded mesh}.
\begin{figure}[h]
\centering
\begin{minipage}{0.30\textwidth}
\begin{center}
\begin{tikzpicture}[scale=3.5]
\draw[black, very thick, -] (0,0) -- (1,0) -- (1,1) -- (0,1) -- (0,0); \draw(0.5, 0.5) node[black] {$1$};
\draw[red,fill=red] (0,0.5) circle (.2ex); \draw(-.1, 0.5) node[black] {$\boldsymbol{\nu}$};
\end{tikzpicture}
\end{center}
\end{minipage}
\quad
\begin{minipage}{0.30\textwidth}
\begin{center}
\begin{tikzpicture}[scale=3.5]
\draw[black, very thick, -] (0,0) -- (1,0) -- (1,1) -- (0,1) -- (0,0);
\draw[black, very thick, -] (1/3,0) -- (1/3,1); \draw[black, very thick, -] (0,1/3) -- (1, 1/3); \draw[black, very thick, -] (0,2/3) -- (1, 2/3);
\draw(1/6, 1/2) node[black] {$1$}; \draw(1/6, 0.85) node[black] {$2$};  \draw(1/6, 0.15) node[black] {$2$}; 
\draw(2/3, 1/2) node[black] {$2$}; \draw(2/3, 0.85) node[black] {$2$};  \draw(2/3, 0.15) node[black] {$2$}; 
\draw[red,fill=red] (0,0.5) circle (.2ex); \draw(-.1, 0.5) node[black] {$\boldsymbol{\nu}$};
\end{tikzpicture}
\end{center}
\end{minipage}
\quad
\begin{minipage}{0.30\textwidth}
\begin{center}
\begin{tikzpicture}[scale=3.5]
\draw[black, very thick, -] (0,0) -- (1,0) -- (1,1) -- (0,1) -- (0,0);
\draw[black, very thick, -] (1/3,0) -- (1/3,1); \draw[black, very thick, -] (0,1/3) -- (1, 1/3); \draw[black, very thick, -] (0,2/3) -- (1, 2/3);
\draw[black, very thick, -] (0,4/9) -- (1/3,4/9); \draw[black, very thick, -] (0,5/9) -- (1/3,5/9);  \draw[black, very thick, -] (1/9,1/3) -- (1/9, 2/3);
\draw(1/18, 0.61) node[black] {$2$};  \draw(1/18, 0.5) node[black] {$1$}; \draw(1/18, 0.38) node[black] {$2$}; 
\draw(2/9, 0.61) node[black] {$2$};  \draw(2/9, 0.5) node[black] {$2$}; \draw(2/9, 0.38) node[black] {$2$}; 
\draw(1/6, 0.85) node[black] {$3$};  \draw(1/6, 0.15) node[black] {$3$}; 
\draw(2/3, 1/2) node[black] {$3$}; \draw(2/3, 0.85) node[black] {$3$};  \draw(2/3, 0.15) node[black] {$3$}; 
\draw[red,fill=red] (0,0.5) circle (.2ex); \draw(-.1, 0.5) node[black] {$\boldsymbol{\nu}$};
\end{tikzpicture}
\end{center}
\end{minipage}
\caption{$\tau_0$ (\textit{left}), $\tau_1$ (\textit{center}), and $\tau_2$ (\textit{right}) of a sequence $\{\taun\}_n$ of meshes graded toward $\boldsymbol{\nu}$ with grading parameter $\mu=1/3$. The numbers inside the elements denote the effective degrees accordingly with \eqref{index polygon}.}
\label{figure graded mesh}
\end{figure}
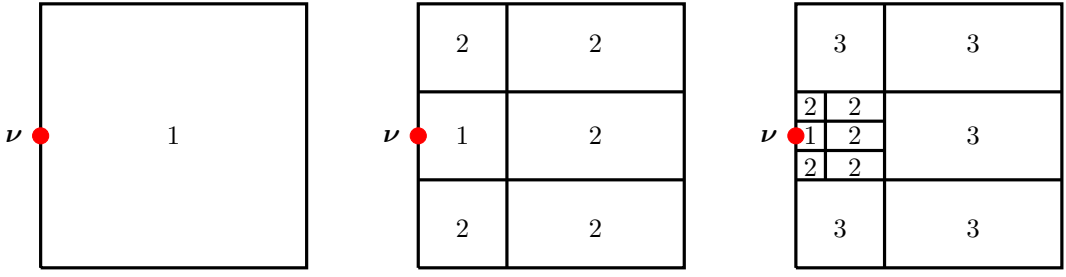

In Figure \ref{figure hp}, we present the decay of the approximate $L^2$ errors in \eqref{rel_errors} in terms of the quadratic root of the number of degrees of freedom.
Hereby, we employ the modified version of the method presented in Section \ref{subsection orthogonalized version}.
Further, we select as wave numbers $\k=10$, $20$, and $40$, and as grading parameters $\mu=0.5$ and $\mu=1/3$, see \eqref{refinement cond}.
\begin{figure}[h]
\begin{minipage}{0.49\textwidth} 
\includegraphics[width=\textwidth]{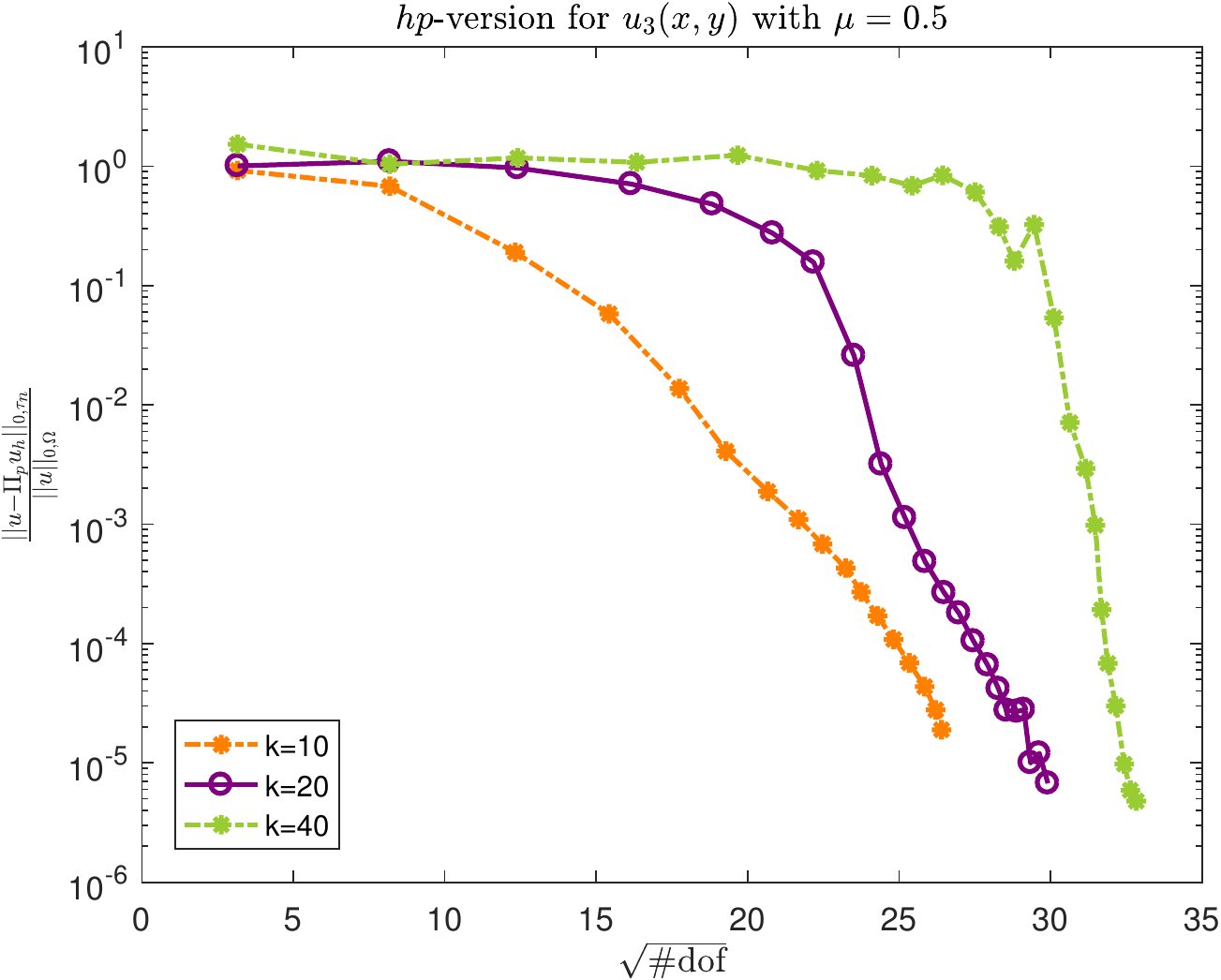}
\end{minipage}
\hspace{0.25cm}
\begin{minipage}{0.49\textwidth}
\includegraphics[width=\textwidth]{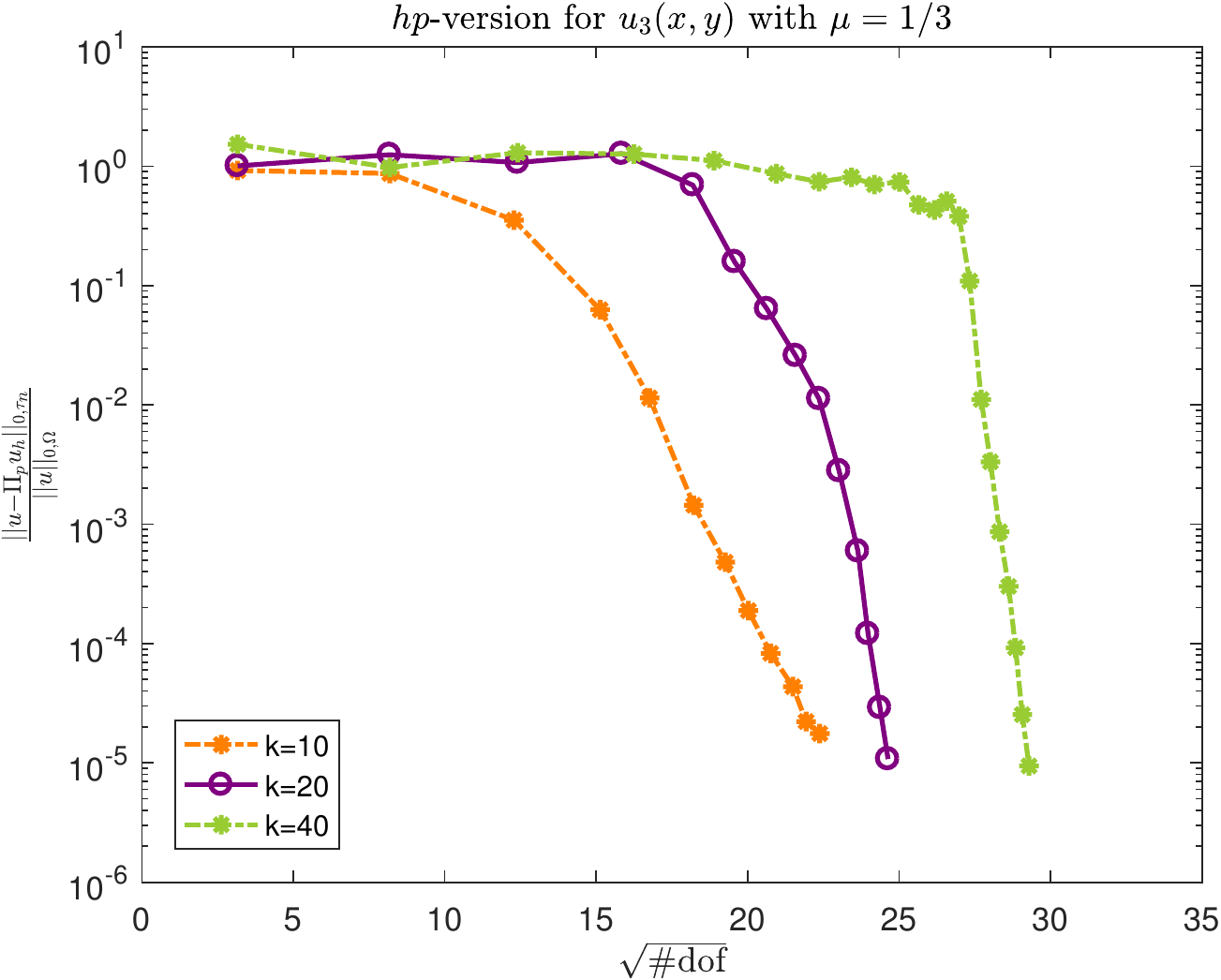}
\end{minipage}
\hspace{0.25cm}
\caption{$\h\p$-version of the modified method on the test case $u_3$ in \eqref{exact solutions u2 u3}, by employing graded meshes as those in Figure \ref{figure graded mesh} with wave numbers $\k=10$, $20$, and $40$, and grading parameters $\mu=0.5$ (\textit{left}) and $\mu=1/3$ (\textit{right}).
The distribution of the effective plane wave degree indices is as in \eqref{index polygon}. In both plots, the approximate $L^2$ error \eqref{rel_errors} is plotted against the quadratic root of the number of degrees of freedom.}
\label{figure hp} 
\end{figure}

We observe a decay of the error which is exponential in terms of the square root of the degrees of freedom
instead of the cubic root as for standard $\h\p$-FEM~\cite{SchwabpandhpFEM} and $\h\p$-VEM~\cite{hpVEMcorner}.
This is typical of the Trefftz setting, see~\cite{PWdG_hpversion, hmps_harmonicpolynomialsapproximationandTrefftzhpdgFEM} in the DG framework and~\cite{conformingHarmonicVEM, ncHVEM} in the VEM framework.

Moreover, we want to highlight that after the pre-asymptotic regime, the relative errors decay extremely rapidly in terms of the number of degrees of freedom. This can be explained by the fact that, for smaller mesh sizes, more and more redundant plane wave directions are removed by the filtering process, compensating the increase in the number of edges. The ``paradox'' here is that via the second filtering process in Algorithm \ref{algorithm orthog process}, the errors of the method decrease exponentially, while the number of degrees of freedom
seems to increase extremely slowly, especially in presence of high wave number.

\subsubsection{Comparison of the modified nonconforming Trefftz-VEM with the PWVEM} \label{section comp Trefftz PWVEM}
In this section, we compare the approximate relative $L^2$ errors in \eqref{rel_errors} of the modified nonconforming Trefftz-VEM with those of the PWVEM \cite{Helmholtz-VEM}. Note that the definition of $\Pip$ is the same for both methods. For the PWVEM, we took the stabilization proposed in \cite{Helmholtz-VEM}.

We consider a boundary value problem of the form \eqref{weak continuous problem} with $\Omega:=(0,1)^2$ and $\GammaR=\partial \Omega$, and exact solution $u_2$ in \eqref{exact solutions u2 u3}. 

\paragraph*{$\h$-version:}

To start with, we compare the $h$-versions of the methods in terms of the number of degrees of freedom when employing Voronoi meshes.
As a first test, we choose $q=6$ (which corresponds to $p=13$) and $k=20$, $40$, and $60$. Then, as a second test, we fix instead $k=20$ and choose $q=5$, $7$, and $9$. The results are shown in Figure \ref{fig:TEST8}.

\begin{figure}[h]
\begin{center}
\begin{minipage}{0.45\textwidth} 
\includegraphics[width=\textwidth]{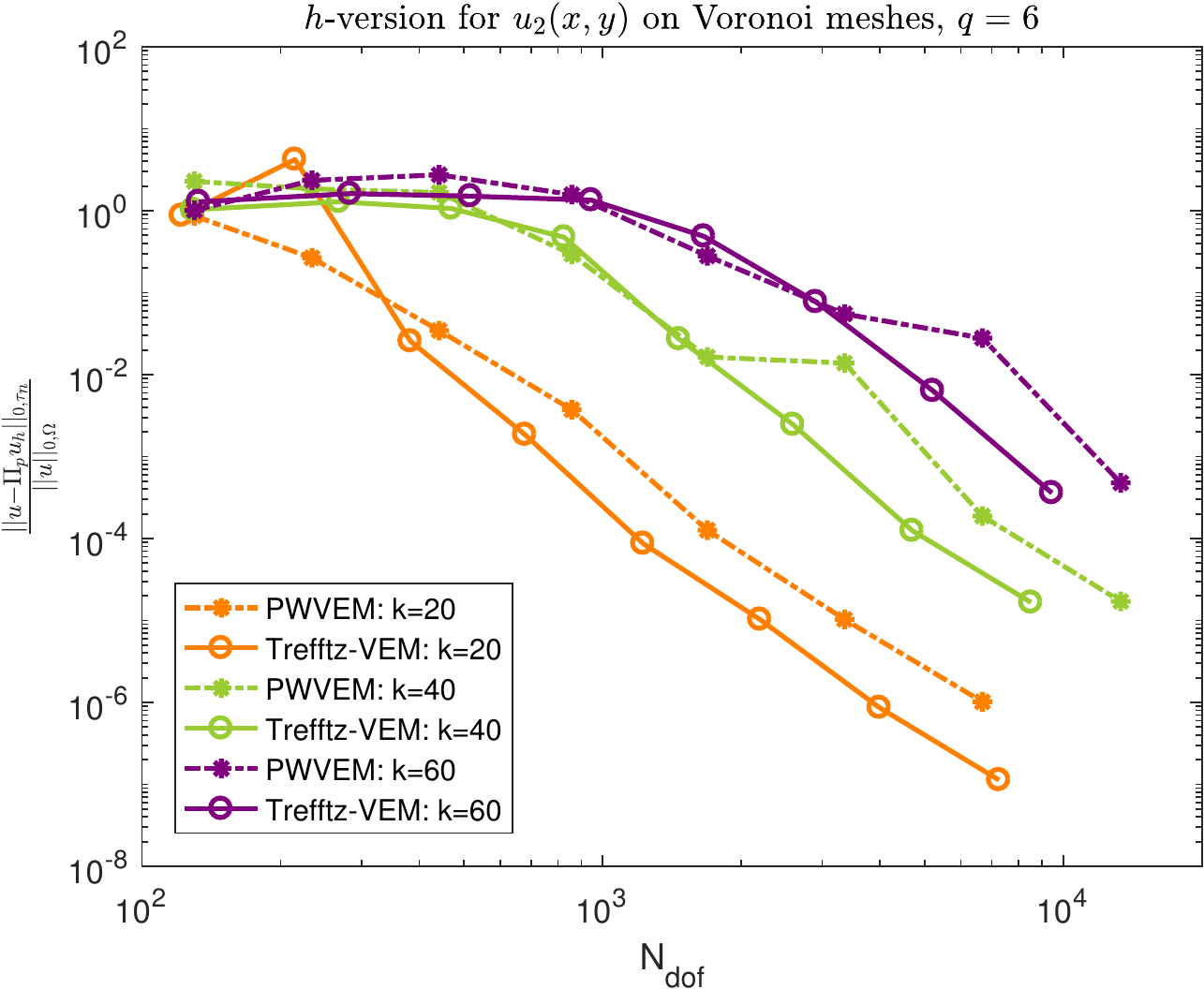}
\end{minipage}
\hspace{0.5cm}
\begin{minipage}{0.45\textwidth}
\includegraphics[width=\textwidth]{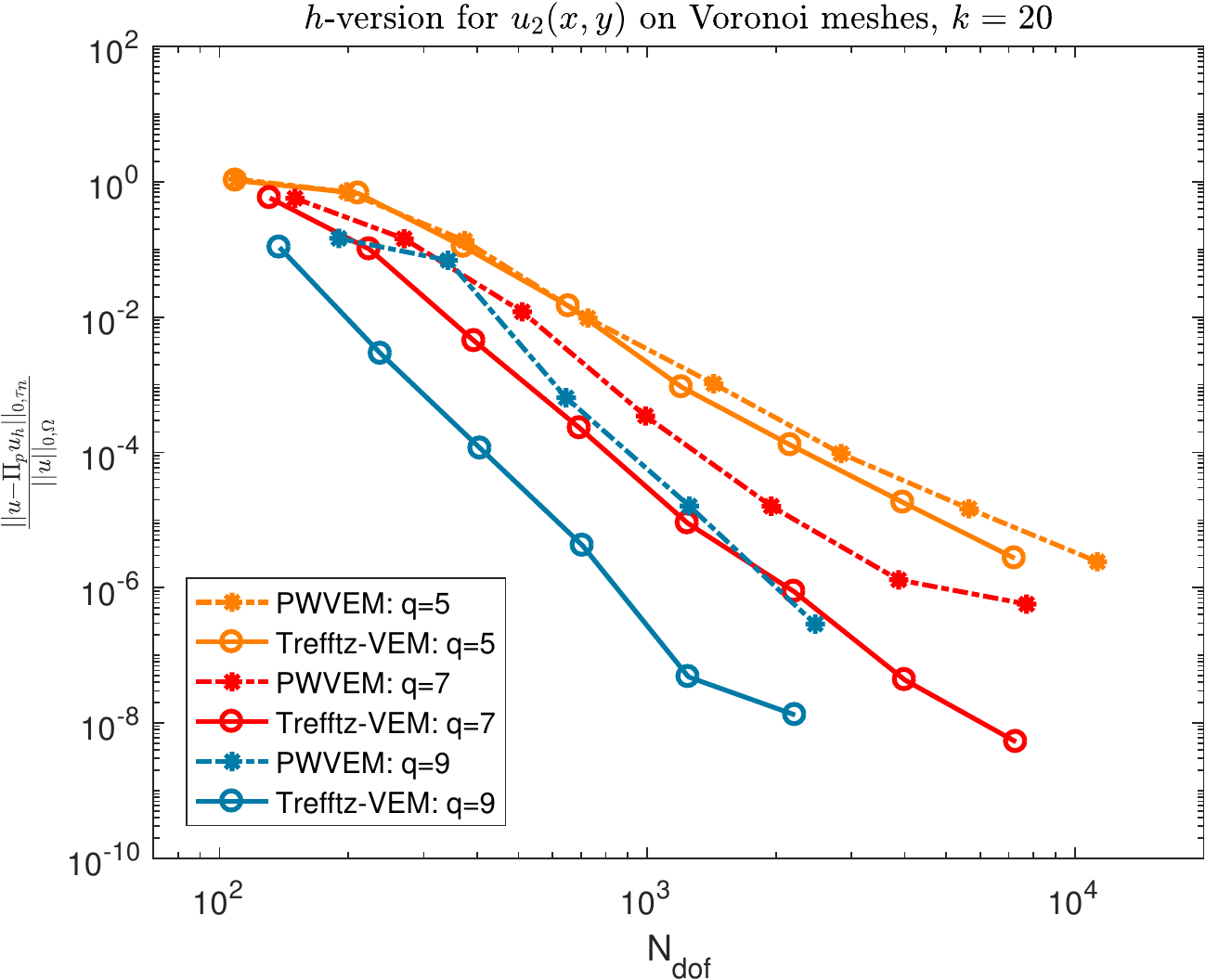}
\end{minipage}
\end{center}
\caption{Comparison of the $h$-version of the modified nonconforming Trefftz-VEM with the PWVEM for $u_2$ in \eqref{exact solutions u2 u3} on Voronoi meshes. \textit{Left}: fixed $q=6$, different values of $k=20$, $40$, and $60$. \textit{Right}: fixed $k=20$, different values of $\q=5$, $7$, and $9$.}
\label{fig:TEST8} 
\end{figure}

In all the cases, the approximate relative $L^2$ errors are smaller when using the nonconforming Trefftz-VEM. This can be traced back to the filtering process applied in the Trefftz version. 

\paragraph*{$p$-version:} For the $p$-versions, we compare the two methods with $k=20$ and $40$ for the exact solution $u_2$ in \eqref{exact solutions u2 u3} on a Voronoi mesh and a nonconvex polygonal mesh made of 16 and 100 elements, respectively.
These meshes are depicted in Figure~\ref{fig:meshes2}. In Figure~\ref{fig:TEST9}, the approximate relative $L^2$ errors are plotted.

\begin{figure}[h]
\begin{center}
\begin{minipage}{0.4\textwidth} 
\includegraphics[width=\textwidth]{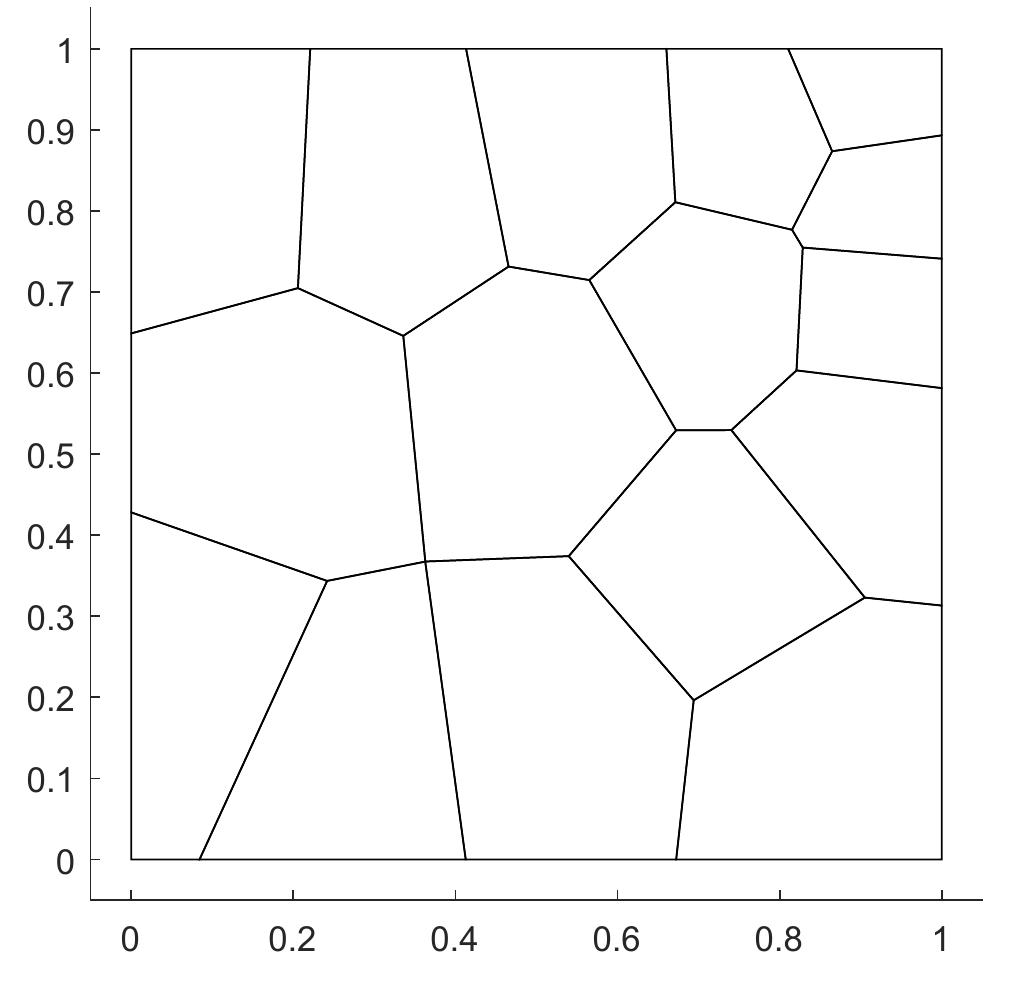}
\end{minipage}
\hspace{1cm}
\begin{minipage}{0.4\textwidth}
\includegraphics[width=\textwidth]{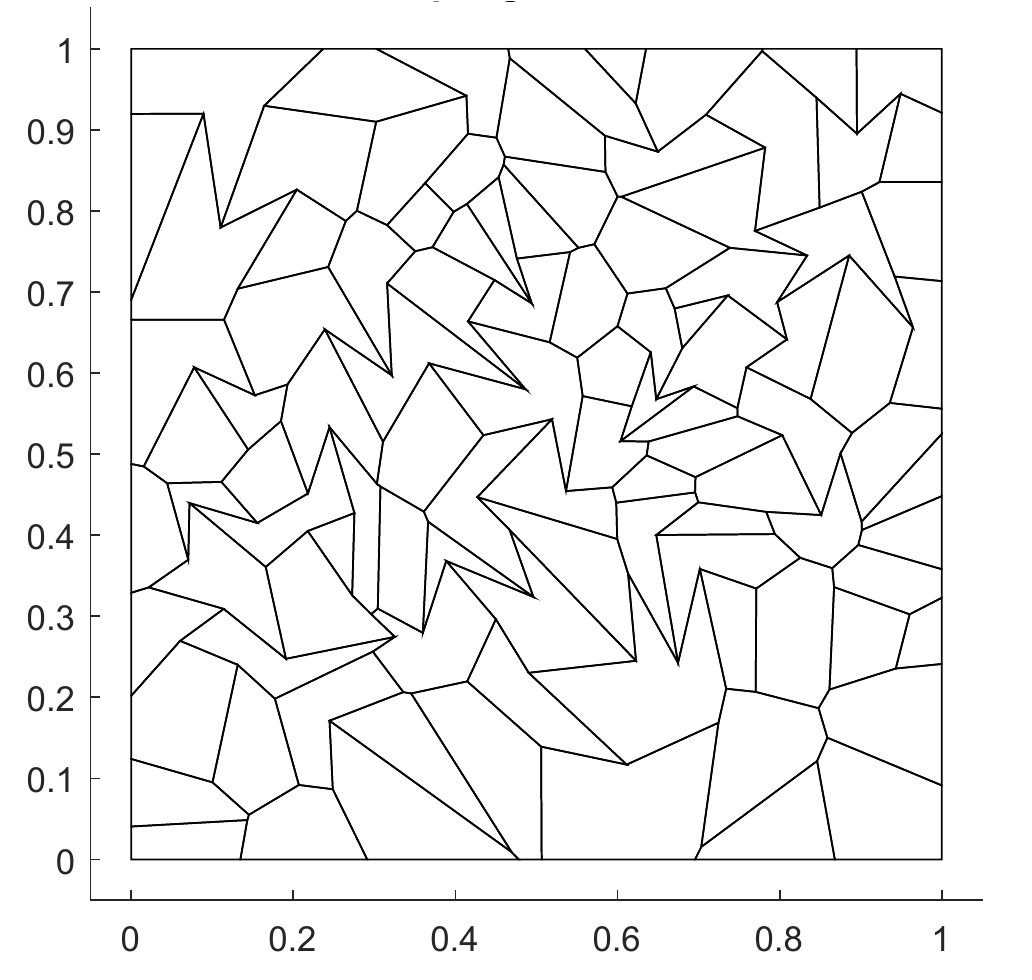}
\end{minipage}
\end{center}
\caption{Voronoi mesh with 16 elements (\textit{left}) and a polygonal mesh made of 100 (also nonconvex) elements (\textit{right}).}
\label{fig:meshes2} 
\end{figure}

\begin{figure}[h]
\begin{center}
\begin{minipage}{0.45\textwidth} 
\includegraphics[width=\textwidth]{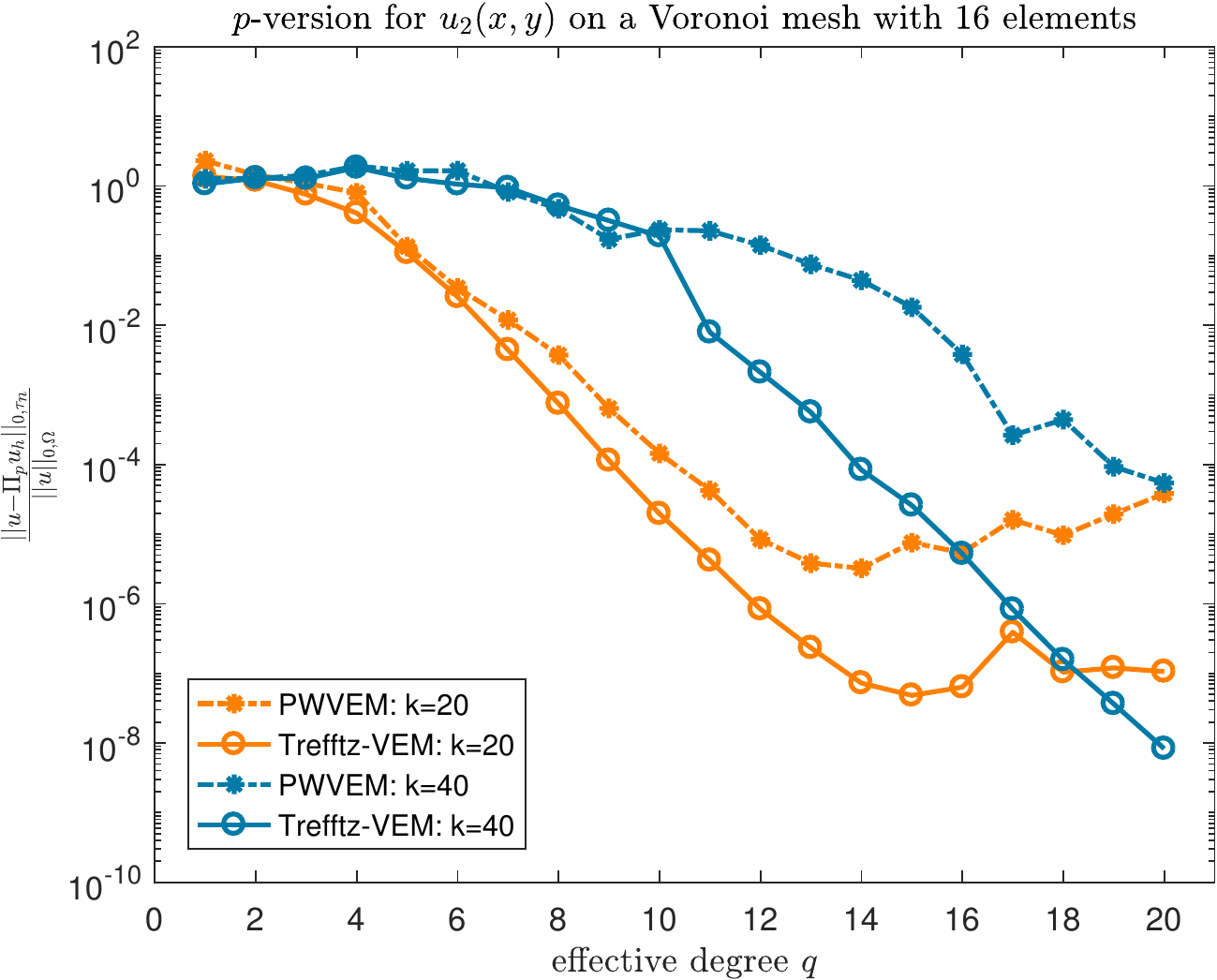}
\end{minipage}
\hspace{0.5cm}
\begin{minipage}{0.45\textwidth}
\includegraphics[width=\textwidth]{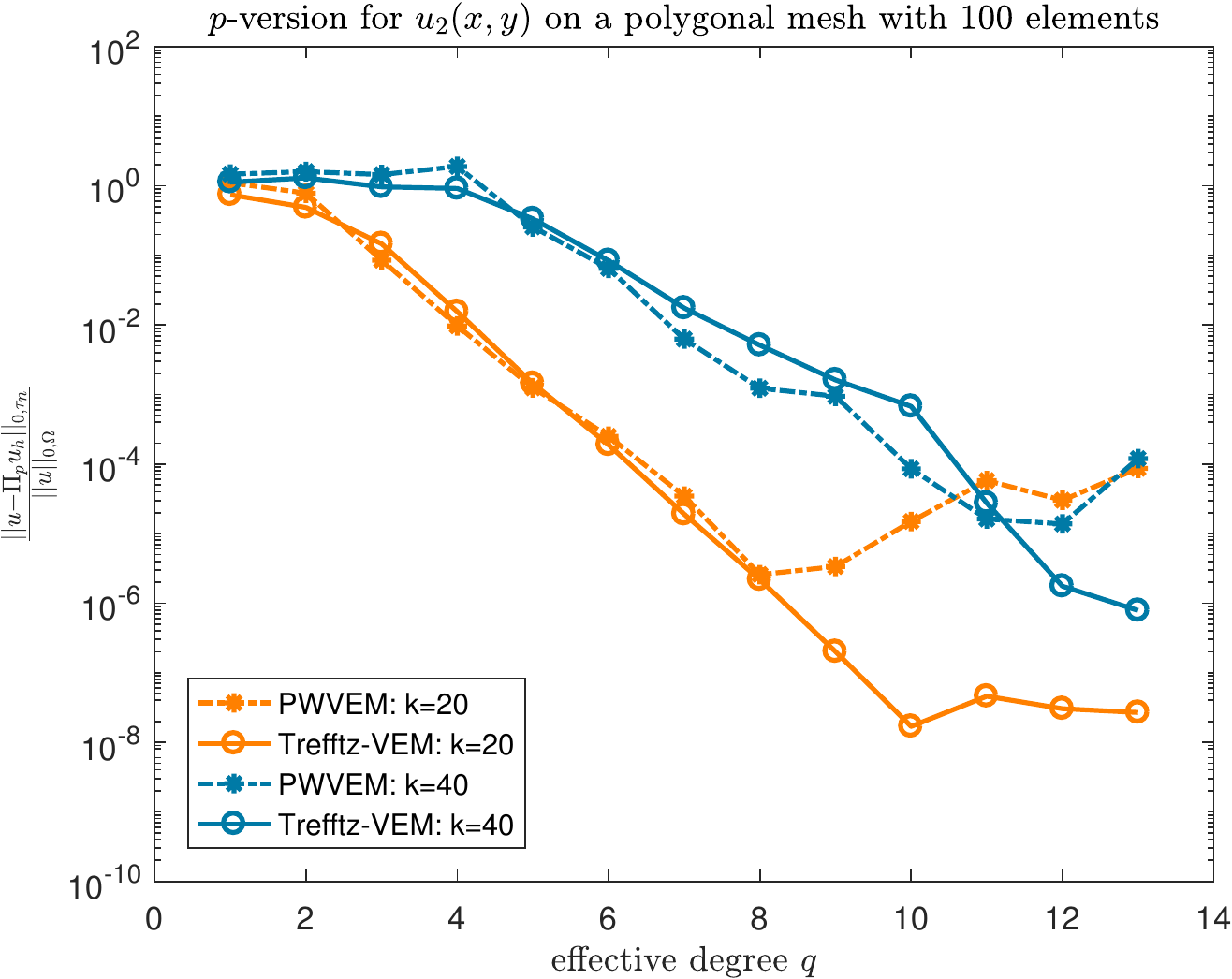}
\end{minipage}
\end{center}
\caption{Comparison of the $p$-version of the modified nonconforming Trefftz-VEM with the PWVEM for $u_2$ in \eqref{exact solutions u2 u3} and $k=20$ and $40$ on the Voronoi mesh with 16 elements (\textit{left}) and  the polygonal mesh with 100 (also nonconvex) elements (\textit{right}).}
\label{fig:TEST9} 
\end{figure}

Also in this case, the modified nonconforming Trefftz-VEM leads to better results, and allows to reach a higher accuracy before instability takes place. In particular, the method seems to be robust even in terms of the geometry of the mesh elements.

Finally, we compare the $\p$-version of the two methods on a structured triangular and a Voronoi mesh with 32 elements each, when using the solution $u_3$ given in \eqref{exact solutions u2 u3}, and $\k=10$ and $20$. This is portrayed in Figure \ref{fig:TEST10}.
\begin{figure}[h]
\begin{center}
\begin{minipage}{0.45\textwidth} 
\includegraphics[width=\textwidth]{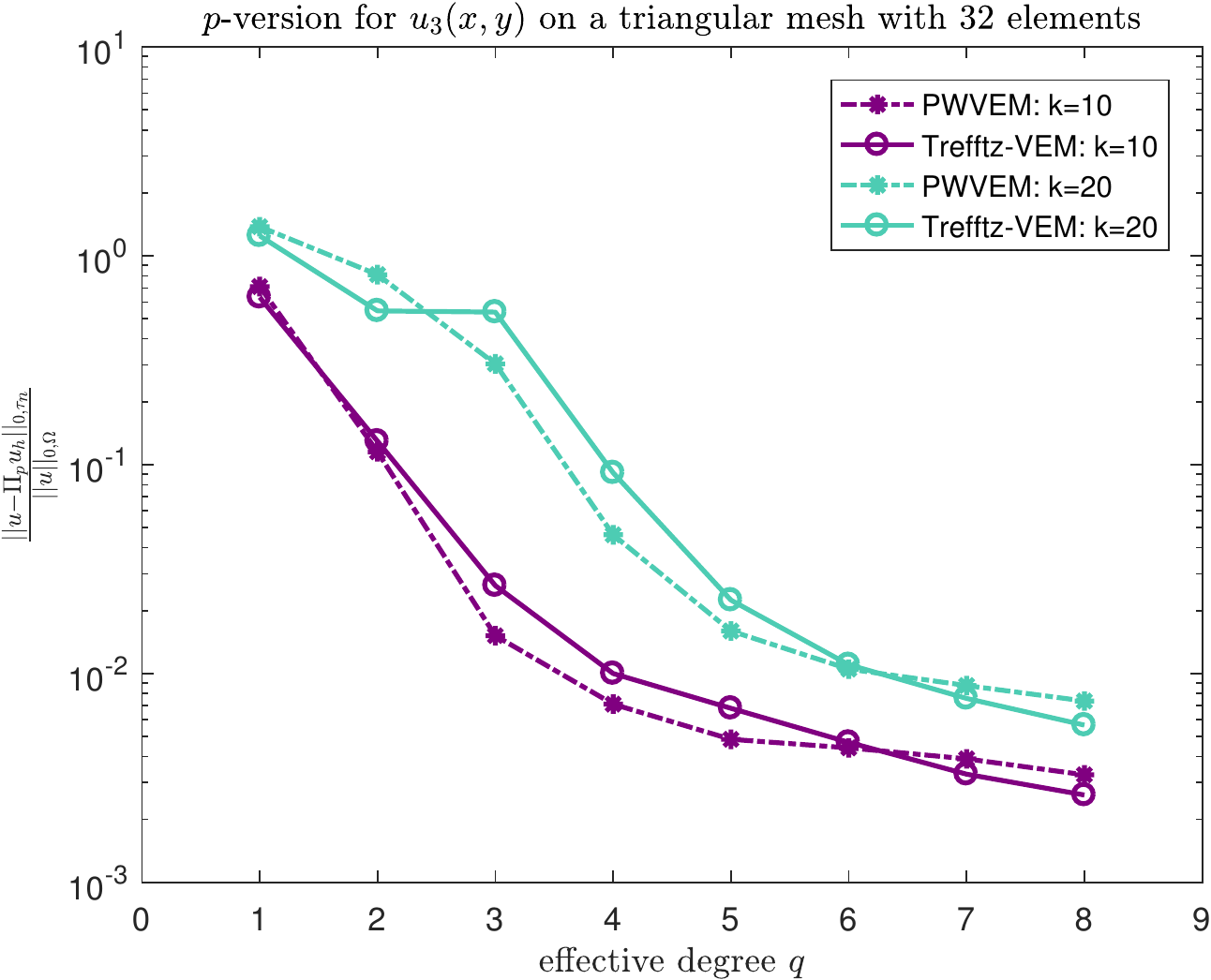}
\end{minipage}
\hspace{0.5cm}
\begin{minipage}{0.45\textwidth}
\includegraphics[width=\textwidth]{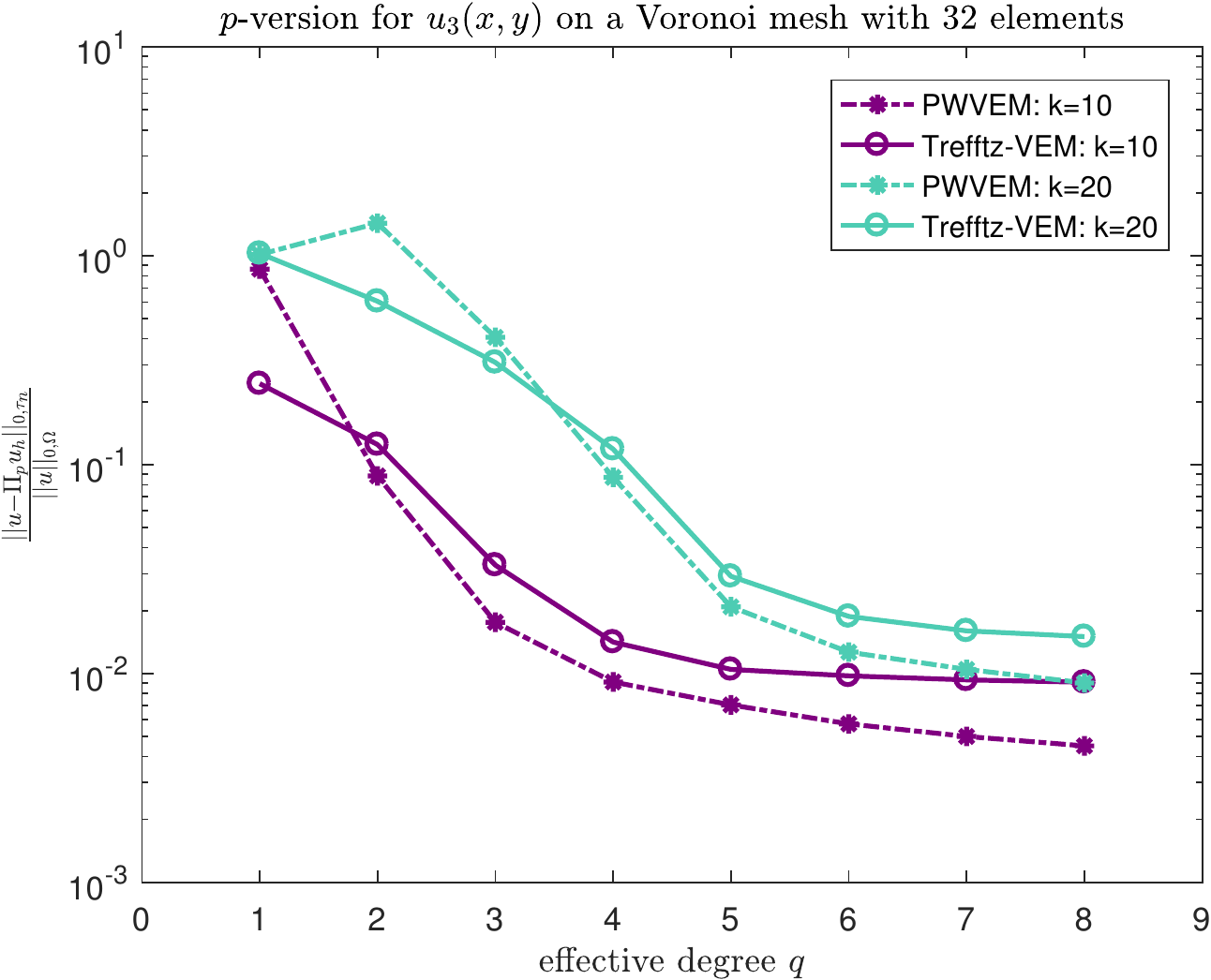}
\end{minipage}
\end{center}
\caption{Comparison of the $p$-version of the modified nonconforming Trefftz VE method with the PWVEM for $u_3$ in \eqref{exact solutions u2 u3}, $\k=10$ and $20$, on a triangular mesh and a Voronoi mesh made of 32 elements each.}
\label{fig:TEST10} 
\end{figure}
In both cases, the convergence rate stagnates after few refinement steps. This is however not surprising, owing to the fact that the solution $u_3$ in~\eqref{exact solutions u2 u3} has a low Sobolev regularity.

\subsubsection{Comparison of the modified nonconforming Trefftz-VEM with the PWDG} \label{subsection PWDG}
In this section, we compare the approximate relative $L^2$ errors of the modified nonconforming Trefftz-VEM with those of the PWDG. For the latter, we choose the penalty parameters of the ultra weak formulation in~\cite{cessenatdespres_basic}.
For all the tests, we employ sequences of Cartesian meshes. 

\paragraph*{$h$-version:}
First, we compare the $\h$-versions of the two methods for a boundary value problem of the form \eqref{weak continuous problem} on $\Omega:=(0,1)^2$ with exact solution $u_2$ given in \eqref{exact solutions u2 u3}, $\theta=1$, and $\GammaR=\partial \Omega$.
The results for fixed $\q=6$ and $\k=20$, $40$, and $60$, and fixed $\k=20$ and $\q=5$, $7$, and $9$, are reported in Figure \ref{fig:TEST11}.
\begin{figure}[h]
\begin{center}
\begin{minipage}{0.45\textwidth} 
\includegraphics[width=1.03\textwidth]{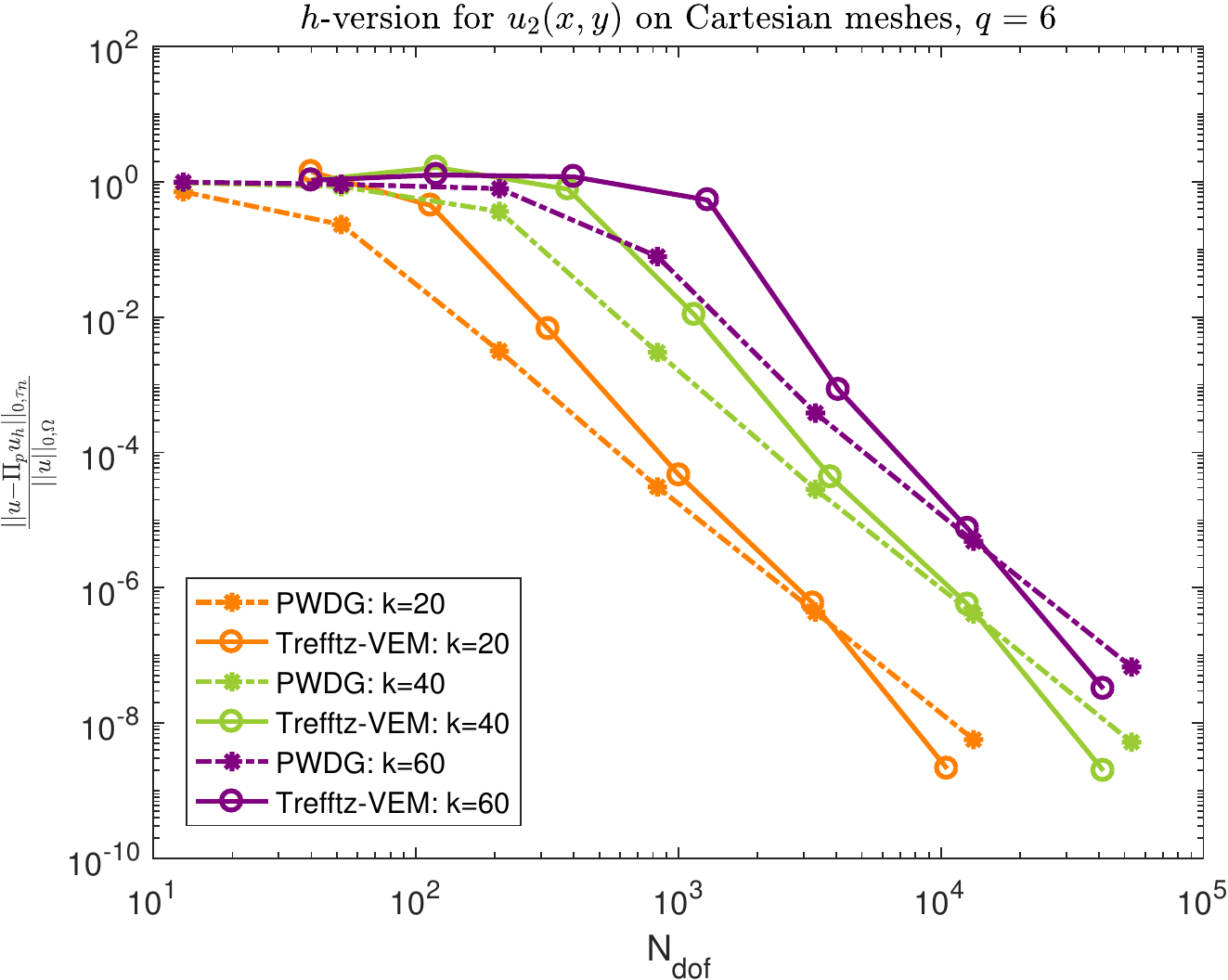}
\end{minipage}
\hspace{0.5cm}
\begin{minipage}{0.45\textwidth}
\includegraphics[width=\textwidth]{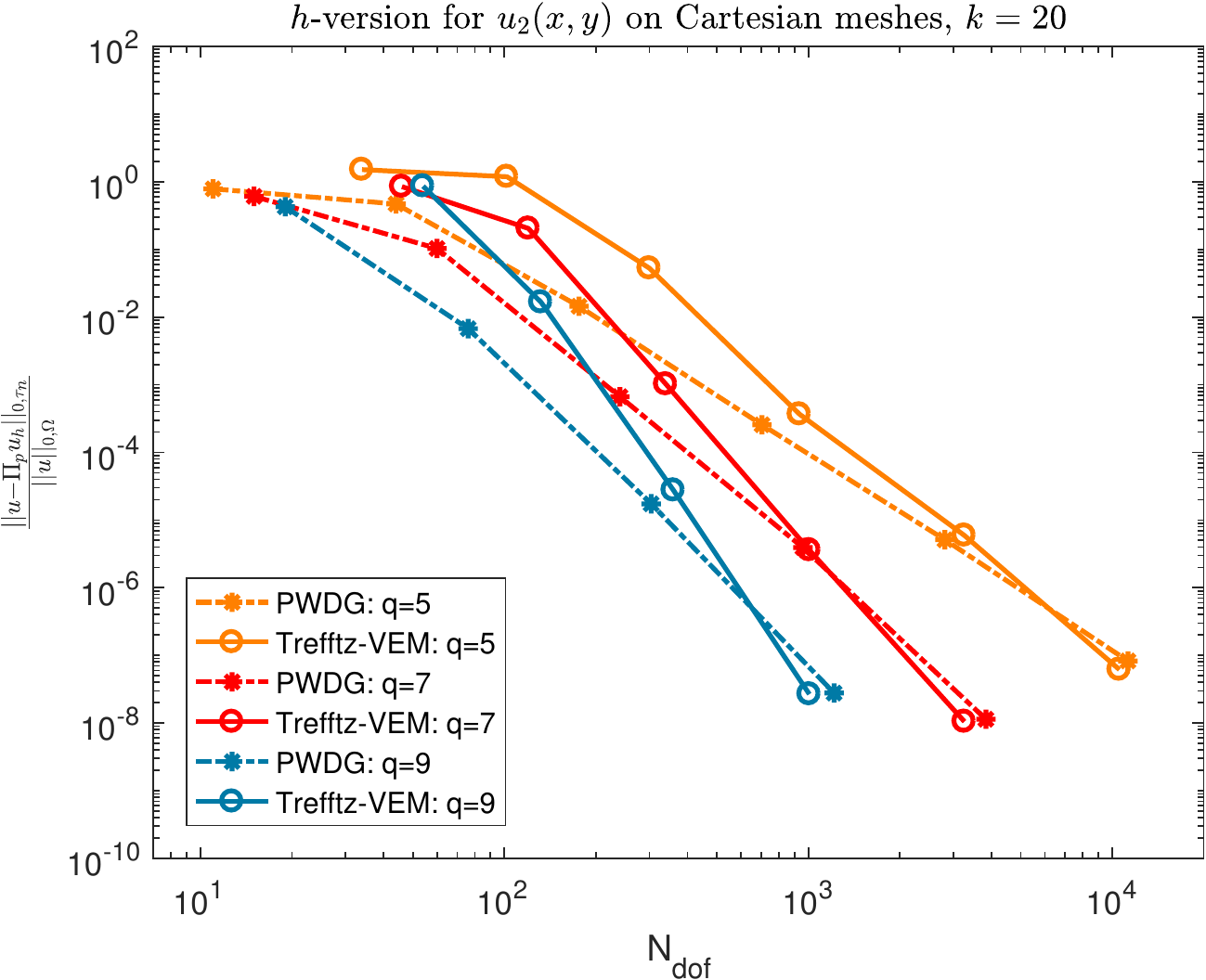}
\end{minipage}
\end{center}
\caption{Comparison of the $h$-version of the modified nonconforming Trefftz-VEM with the PWDG for $u_2$ in~\eqref{exact solutions u2 u3} on Cartesian meshes. \textit{Left}: fixed $q=6$, different values of $\k=20$, $40$, and $60$. \textit{Right}: fixed $k=20$, different values of $\q=5$, $7$, and $9$.}
\label{fig:TEST11} 
\end{figure}

It can be noticed that, with both methods, we can approximately reach the same accuracy. For the nonconforming Trefftz-VEM, the pre-asymptotic regime is broader, followed however by a ``steeper" slope of the convergence rate.
This broader pre-asymptotic area can be explained by the fact that, on coarse meshes, the removing procedure of Algorithm \ref{algorithm orthog process} is almost not performed, and thus more degrees of freedom
than in PWDG are employed, whereas for fine meshes, the removing procedure has a huge impact, see Tables~\ref{tab:TEST4_D_cart} and~\ref{tab:TEST4_voro_D}.

Secondly, we perform the same tests as before, considering now as exact solution the function $u_3$ in \eqref{exact solutions u2 u3} instead of $u_2$, see Figure \ref{fig:TEST12}. We observe a similar behaviour as for the results in Figure~\ref{fig:TEST10}.

\begin{figure}[h]
\begin{center}
\begin{minipage}{0.45\textwidth} 
\includegraphics[width=\textwidth]{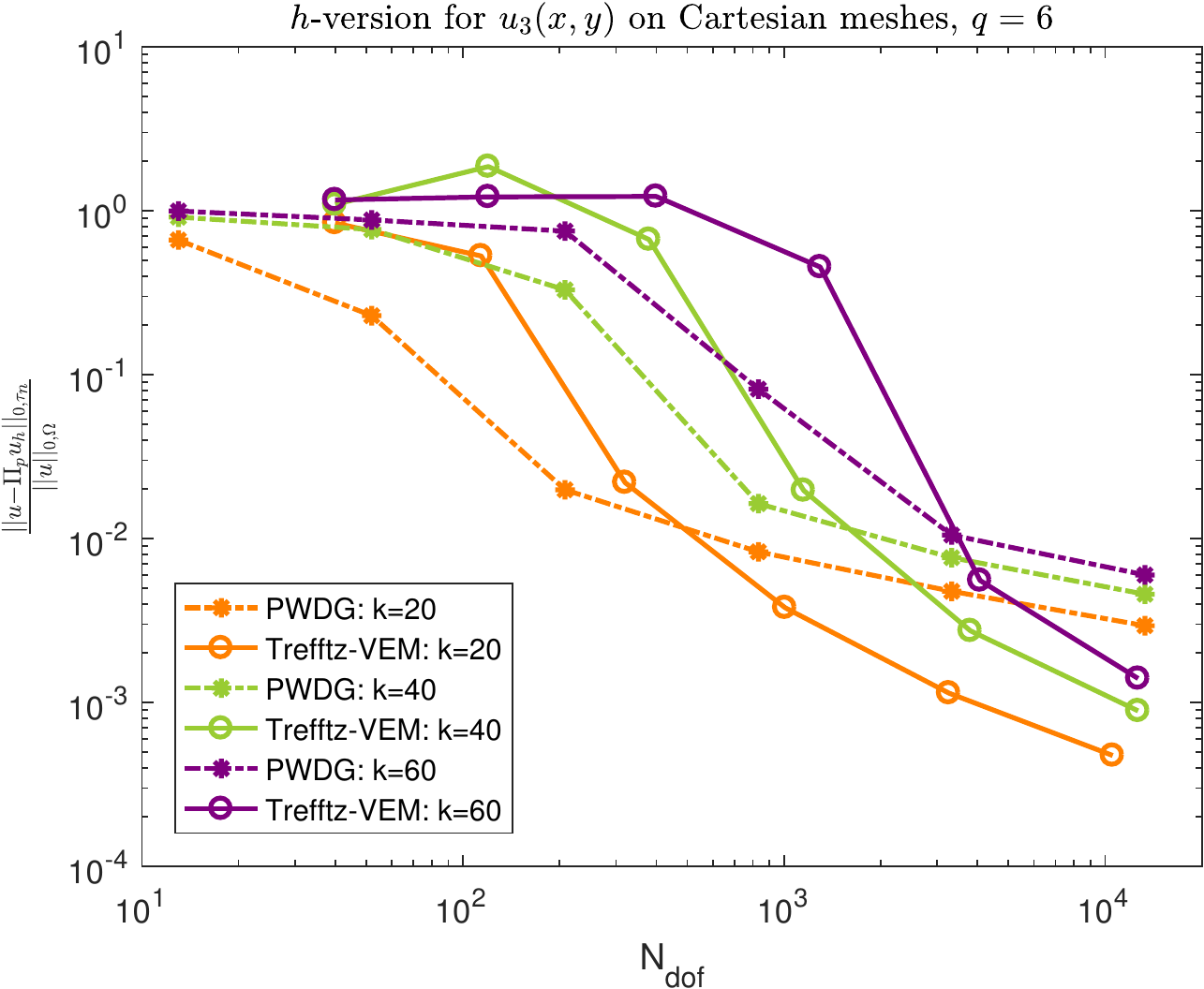}
\end{minipage}
\hspace{0.5cm}
\begin{minipage}{0.45\textwidth}
\includegraphics[width=\textwidth]{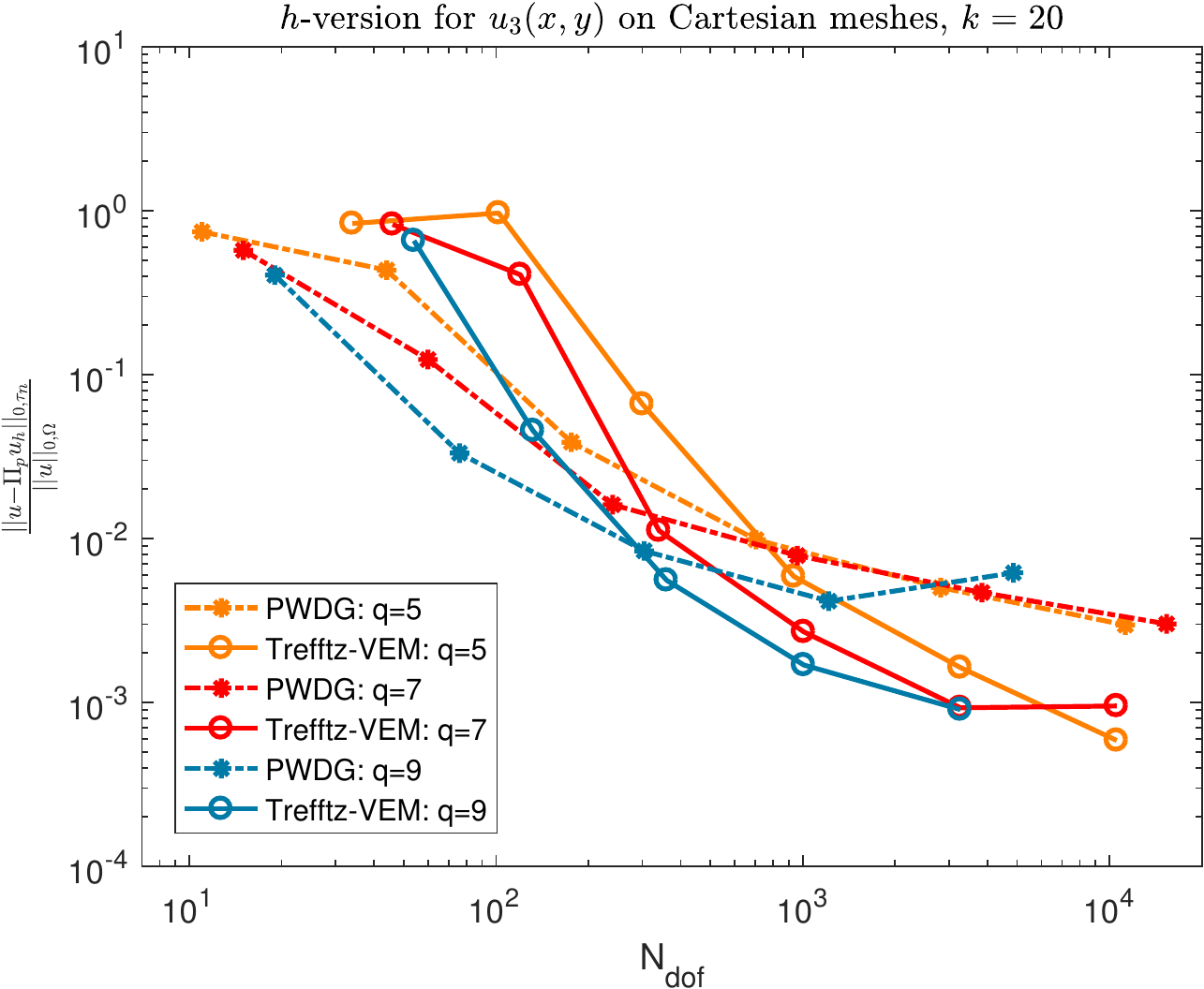}
\end{minipage}
\end{center}
\caption{Comparison of the $h$-version of the modified nonconforming Trefftz VE method with the PWDG for $u_3$ in~\eqref{exact solutions u2 u3} on Cartesian meshes. \textit{Left}: fixed $\q=6$, different values of $\k=20$, $40$, and $60$. \textit{Right}: fixed $\k=20$, different values of $\q=5$, $7$, and $9$.}
\label{fig:TEST12} 
\end{figure}

\paragraph*{$\p$-version:} 

Concerning the $\p$-version, we compare the approximate relative $L^2$ bulk errors on a Cartesian mesh made of 16 elements with exact solution given by $u_2$ in \eqref{exact solutions u2 u3} and $\k=20$, $40$ and $60$, and exact solution $u_3$ in \eqref{exact solutions u2 u3} and $\k=10$ and $20$, respectively.
The numerical results are displayed in Figure \ref{fig:TEST13}.
\begin{figure}[h]
\begin{center}
\begin{minipage}{0.45\textwidth} 
\includegraphics[width=\textwidth]{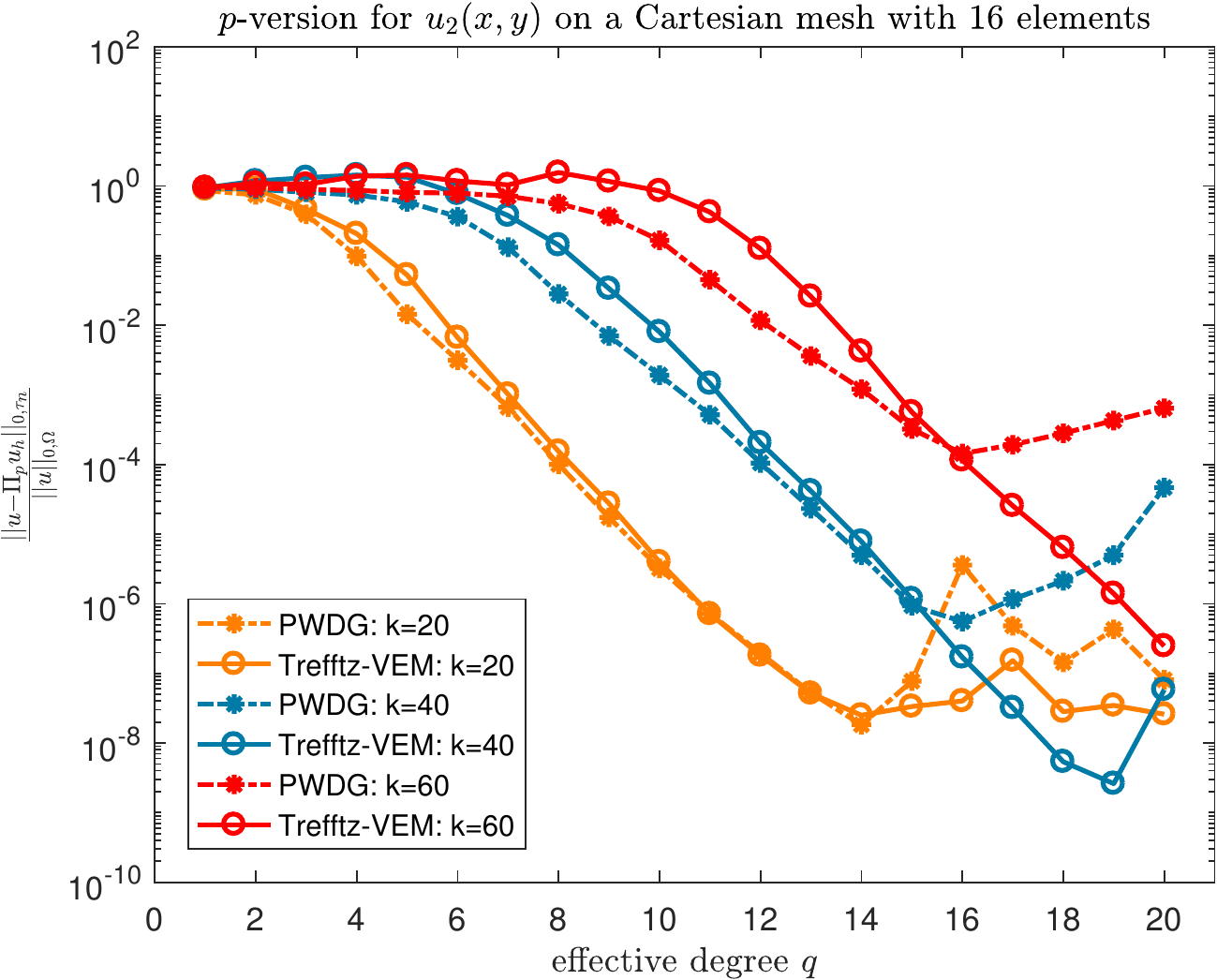}
\end{minipage}
\hspace{0.5cm}
\begin{minipage}{0.45\textwidth}
\includegraphics[width=\textwidth]{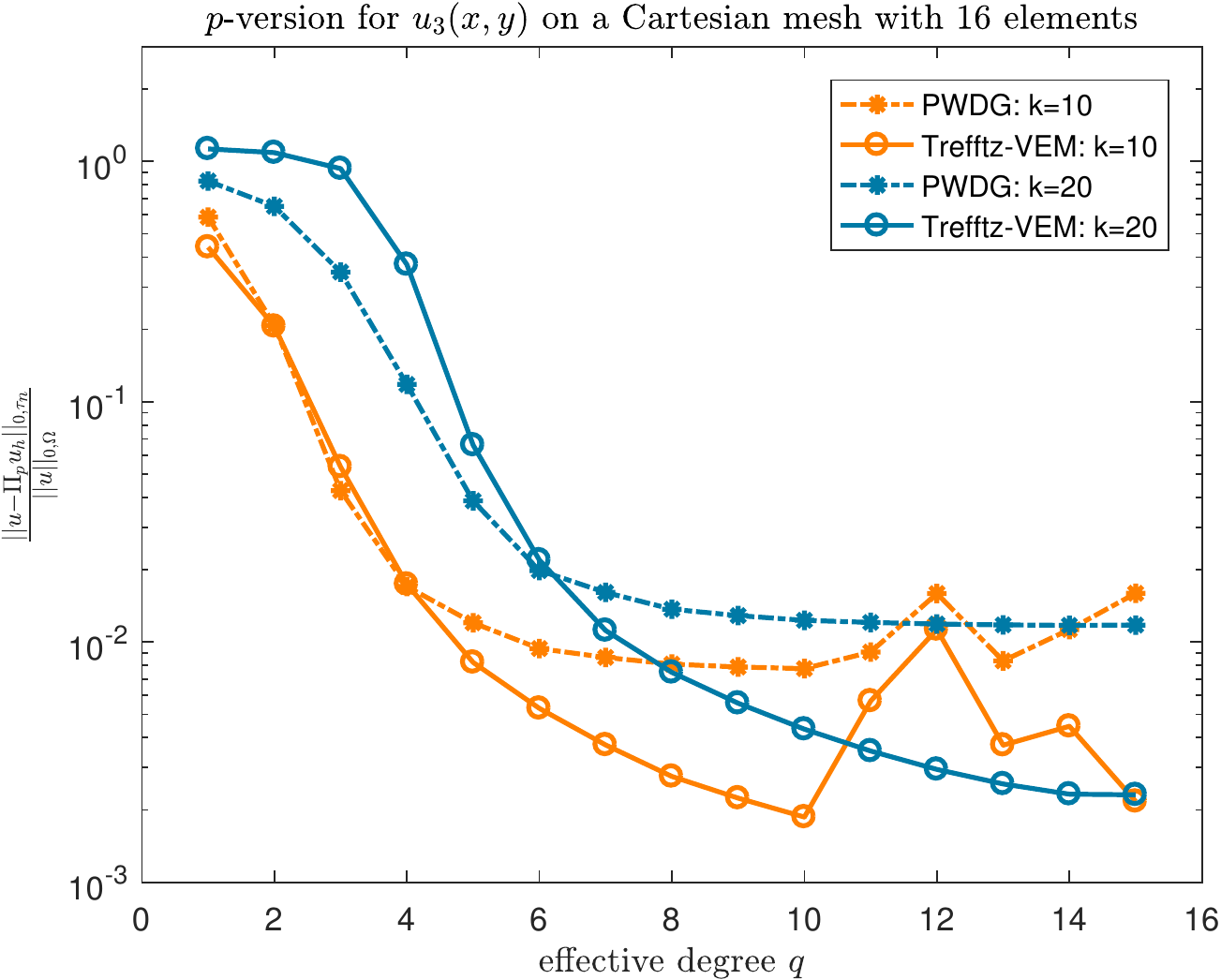}
\end{minipage}
\end{center}
\caption{Comparison of the $p$-version of the modified nonconforming Trefftz-VEM with the PWDG for $u_2$ in \eqref{exact solutions u2 u3}, $\k=20$, $40$ and $60$ (\textit{left}),
and for $u_3$ in \eqref{exact solutions u2 u3}, $\k=10$ and $20$ (\textit{right}) on a Cartesian mesh made of 16 elements.}
\label{fig:TEST13} 
\end{figure}
Very interestingly, the $\p$-version of the modified nonconforming Trefftz-VEM seems to lead to more robust performance than PWDG, especially for higher wave numbers.

\section{Conclusions} \label{section conclusions}
In this paper, we extended the nonconforming Trefftz-VEM in \cite{ncTVEM_theory} to Helmholtz boundary value problems endowed with mixed boundary conditions.
We presented a series of numerical experiments showing that the original version severely suffers of ill-conditioning, making the method practically unreliable.

In order to mitigate the lack of robustness due to the ill-conditioning related to the choice of plane wave basis functions in the design of the method,
we built up a numerical recipe based on the orthonormalization of the edge plane wave basis functions via an eigendecomposition of the associated edge mass matrices.
We highly exploited the fact that, using the nonconforming setting, it is possible to modify the basis functions edgewise without affecting their behavior on the other edges,
which could also be very appealing in regard of an extension of the method to the 3D case and to nonconforming methods for other problems.
Using the above-mentioned strategy, the modified nonconforming Trefftz-VEM becomes numerically more stable. Such a recipe also allows for a significant reduction of the number of degrees of freedom.

Numerical experiments confirm the convergence rates derived in \cite{ncTVEM_theory}. Moreover, the $p$- and $hp$-versions of this new method were discussed. We have seen that the modified version of the nonconforming Trefftz-VEM
provides in many cases better performance than other plane wave methods for the approximation of solutions to Helmholtz boundary value problems, especially in the case of both high wave numbers and effective degrees.

Extensions of the approach herein presented to the case of piecewise constant wave number are under investigation.

\section*{Acknowledgements}
The authors have been funded by the Austrian Science Fund (FWF) through the project F 65 (L.M. and I.P.) and the project P 29197-N32 (I.P. and A.P.), and by the Vienna Science and Technology Fund (WWTF) through the project MA14-006 (I.P.).\\

{\footnotesize
\bibliography{bibliogr}
}
\bibliographystyle{plain}

\end{document}